\newtheorem{thm}{Theorem}
\newtheorem{lemma}[thm]{Lemma}
\newtheorem{propo}[thm]{Proposition}
\theoremstyle{definition}
\newtheorem{defn}{Definition}
\newcommand{\Z}{\mathbb{Z}}
\title{Dynamically generated Networks}
\author{Oliver Knill}
\date{November 17, 2013}
\address{
        Department of Mathematics \\
        Harvard University \\
        Cambridge, MA, 02138, 
        Harvard University 
        }
\subjclass{Primary:  05C82, 90B10,91D30,68R10  }
\keywords{Graph theory, Networks, Fermat primes, Piermont primes, Artin constant, Smooth numbers, Collatz problem}
\begin{document}
\begin{abstract}
Simple algebraic rules can produce complex networks with rich
structures. These graphs are obtained when looking at a monoid
operating on a ring. There are relations to dynamical systems theory 
and number theory. This document illustrates this class of networks
introduced together with Montasser Ghachem in \cite{GK1,GK2}. 
Besides showing off pictures, we look at elementary results related to
the Chinese remainder theorem, the Collatz problem, the
Artin constant, Fermat primes and Pierpont primes.
\end{abstract}
\maketitle

\section{Introduction}

In September 2013, we stumbled upon networks generated by finitely many maps $T_i$ 
on a ring $R$ \cite{GK1,GK2}. The rule is that two different points $x,y$ in $R$ 
are connected if there is a map $T_i$ from $x$ to $y$. Some constructions of finite
simple graphs can be seen below in this document.
The idea is based on the old concept of {\bf Cayley graphs} which visualizes finitely presented groups
equipped with finitely many generators $T_i(x) = a_i x$ on groups. For a single transformation 
$T$ on a ring $R$, one has a dynamical system: $T: R \to R$. The networks visualize the orbit 
structure of these systems if we think of the monoid generated by $T$ as ``time". We disregard here however
the digraph structure, self-loops and multiple connections and look at finite simple graphs only. 
As in complex dynamics, where the simplest polynomial maps already produce a rich variety of 
fractals, the discrete structures can be complex. The maps $T_i$ on the ring do not have to be algebraic, 
they could be any permutation on a finite set. One can see these networks as graph homomorphic images
of Cayley graphs generated by the transformations in the permutation groups of the vertex set. 
An other motivation comes from dynamical systems theory: since a computer always simulates
a system on a finite set, it is interesting to see what the relation between this discrete
and continuum is. The relation between the continuum and arithmetic systems have been investigated
for example in \cite{vivaldi,Rannou,lanford98}; a dynamical system like $T(x) = cx(1-x)$ on the interval 
$[0,1]$ is realized on the computer as a map on a finite set leading to finitely many cyclic attractors. 
For two or more maps, this can become a rather complex and geometric network.  \\

The fact that simple polynomial maps produce arithmetic chaos is exploited by {\bf pseudo 
random number generators}. There is a mild justification in that random variables like
$x,T(x)=x^2+c$ are asymptotically independent \cite{knillprobability} on $Z_n$ in the limit
$n \to \infty$. While the pseudo random nature given by the quadratic map $Z_n$ is unclear, the orbits are
sufficiently random to be exploited or example in the ``Pollard rho method" for integer factorization \cite{Riesel}.
Maps like $x \to x^2$ modulo $p$ or cellular automata maps were visualized in \cite{WolframState1,WolframState2}
using state transition digraphs. \\

What is new? By allowing more transformations, we can get more structure and more variety. Our point of view is
motivated heavily from the study of deterministic and random networks. 
The generalization from groups to monoids and by looking at the state space instead of the group itself, we 
break symmetries: while Cayley graphs look more like rigid crystals, the monoid graphs tend to produce 
rather organic structures resembling networks we see in social networks, computer networks,
synapses, chemical or biological networks and especially in peer to peer networks.
This happens already in the simplest cases: we can for example take quadratic maps $T(x) = x^2+a, S(x)=x^2+b$ 
or affine maps like $T(x)=3x+1$ and $S(x)=3x+1$ on $Z_n$. 
Whether we take affine, or nonlinear or arithmetic functions, the networks obtained in such a dynamical way 
can be intriguing:  \\

{\bf a)} The dynamical graphs show {\bf visually interesting structures} which bring arithmetic relations to live. 
{\bf b)} Their {\bf statistical properties} of path length, global cluster and vertex degree are interesting. 
{\bf c)} Some examples lead to {\bf deterministic small world examples} with small diameter and large cluster. 
{\bf d)} The graphs display {\bf rich-club phenomena}, where high degree nodes are more interconnected.
{\bf e)} Many  feature {\bf garden of eden states}, unreachable configurations like transient trees.
{\bf f)} They can feature {\bf attractors} like cycle sub graphs but also more complex structures.
{\bf g)} By definition, these graphs are {\bf factors of Cayley graphs} on the permutation group of V.
{\bf h)} They are {\bf universal} in the sense that any finite simple graph can be obtained like that. 
{\bf i)} In many cases, classes of networks produce natural probability spaces as we can parametrize maps.
{\bf j)} In certain cases, the graphs appear to be {\bf triangularizations of manifolds} or varieties. 
{\bf k)} In the arithmetic case, the topology like connectivity and dimension leads to Diophantine problems.   \\

Some pictures can be seen at the end of the article. Here are two experimental observations: \\

{\bf A)} (\cite{GK1}) We measure that the mean length $\mu(G)$ and the global clustering coefficient $\nu(G)$ 
have the property that $\lambda(G) = -\mu(G)/\log(\nu(G))$ often has a compact limit set if the number of nodes go
to infinity. When choosing random permutations and averaging, we see actual convergence in the limit $n \to \infty$. 
The two quantities $\mu(G)$ and $\nu(G)$ are essential to see small world phenomena as seen in \cite{WattsStrogatz}.
A reasonable conjecture is that in the probability space of all pairs of random permutations $f,g$ on $Z_n$,
the random variable $\lambda(G(f,g))$ has an expectation which converges for $n \to \infty$. There 
is strong numerical evidence for that. In \cite{GK1} we also showed how one can naturally construct
large bipartite or multipartite graphs using dynamical constructions. \\

{\bf B)} (\cite{GK2}) We get deterministic examples of networks which feature all the 
statistical properties of Watts-Strogatz \cite{WattsStrogatz} in the sense that $\mu,\nu$ and vertex distributions
behave in the same way. The statistic is almost indistinguishable from W-S. These examples are of the 
form $T_i(x) = [x^{1+\epsilon_i} + i]$, where at least one $\epsilon_i$ is $0$ and 
the others are equal to $p$, a permutation parameter.
For $\epsilon_i=0$ and $k$ maps, we have the initial wiring setup of Watts-Strogatz for $p=0$. 
While in Watts-Strogatz, the rewiring is done in a probabilistic way, this is taken care
by increasing the nonlinearity. If the maps are nonlinear but close to linear, we see interesting
geometric features appearing discussed in \cite{GK2}. For example: 
if maps are close to linear maps, interesting topological structures can appear. \\

In this paper, we prove a couple of elementary which indicate how these graphs
can relate to elementary number theory. To do so, we have questions from dynamical systems as a guide. 
The subject of networks has exploded in the last decade, as a 
look onto the library shows \cite{Goyal,BornholdtSchuster,GoodmanORourke,CohenHavlin,newman2010,nbw2006,
WassermanFaust,Jackson,SmallWorld, BallobasKozmaMiklo,ibe,vansteen,newman2010,Meester,Easley,shen}.
It has been made accessible to a larger audience in books like 
\cite{SixDegrees,Buchanan, Linked,Sync,Connected}.  \\

The topics in the next sections are in an obvious way motivated from corresponding problems in 
complex dynamics, where the question of connectivity and dimension of the Julia sets is of interest. One can 
also look at the analogue of the Mandelbrot set, the set of parameters for which graphs 
are connected. If we have a class of dynamically generated graphs, we can ask how the clique size is distributed
on the parameter space. As in dynamical system terminology, we look for the 
connectivity locus and the dimension of the object. The subject can lead to relatively simple but unexplored 
questions. It is suited for experimentation (as we have accessed it ourselves primarily) and is 
almost unexplored. We illustrate this by formulating some simple questions related to connectivity. 
Computer algebra code will be available on the project website and the Wolfram demonstration project. 

\begin{figure}
\scalebox{0.43}{\includegraphics{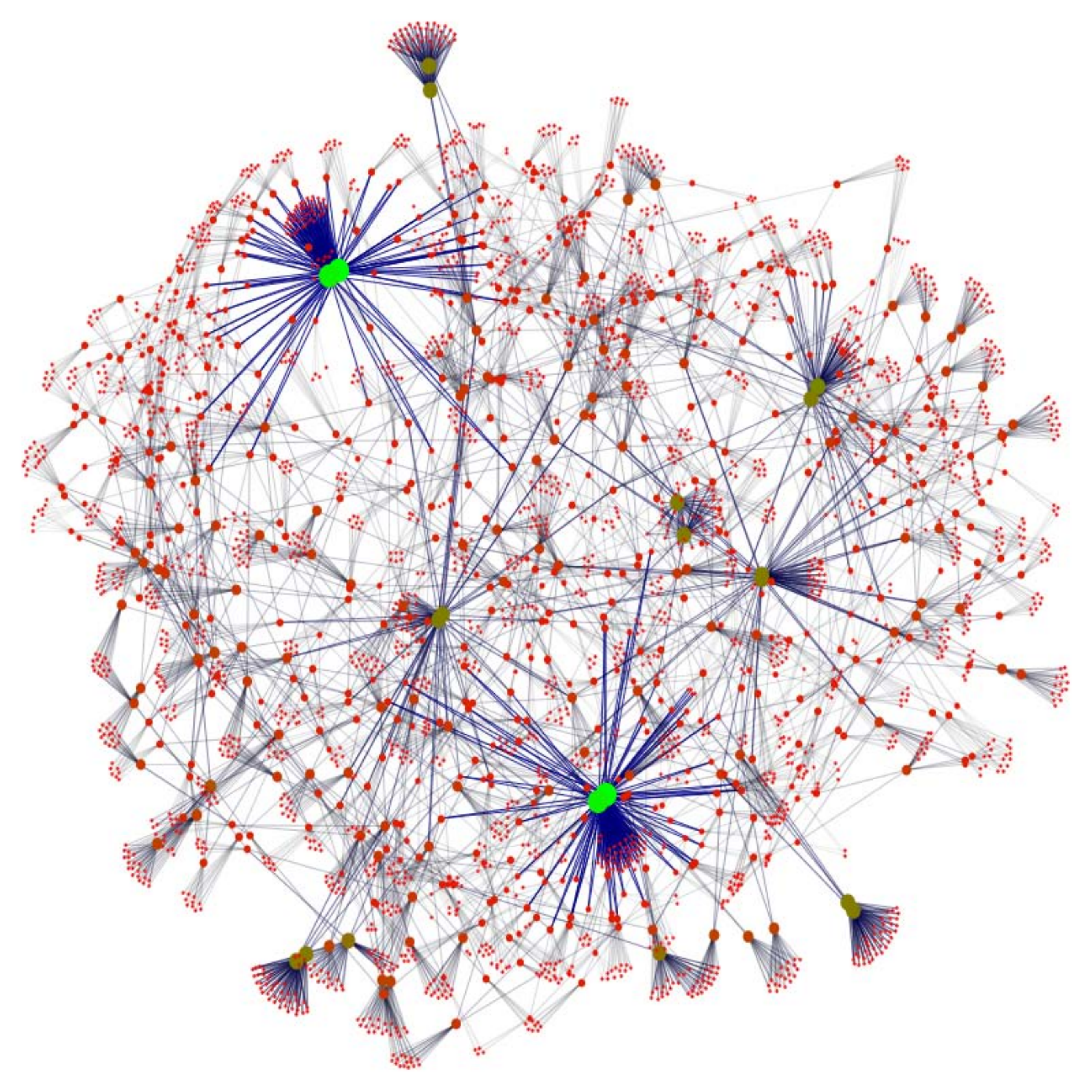}}
\caption{
A graph is generated by $f(x)=x^2+1,g(x)=x^2+2$ on $Z_{3000}$. It has diameter $9$,
average vertex degree $3.99$, characteristic path length $\mu=5.8$, mean clustering $\nu=0.00074$
and a length-cluster coefficient $\lambda=0.806994$.
}
\end{figure}

\section{Affine maps and smooth numbers}

We first look at graphs generated by a single affine map $T(x)=ax+b$ on the ring $Z_n=Z/(nZ)$. 
Given parameters $(a,b)$, for which $n$ is the graph connected on $Z_n$? Lets 
assume first $T(x)= a x$ on $Z_n$. The following result will be used later on: 

\begin{lemma}
The graph on $Z_n$ generated by $T(x)=2x$ is connected if and only if $n$ is a power of $2$. 
\label{lemma1}
\end{lemma}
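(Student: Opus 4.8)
The plan is to prove the two implications separately, the ``if'' direction by a direct forward-orbit argument and the ``only if'' direction by passing to a quotient where the doubling map becomes invertible.

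First I would handle the case $n=2^k$. Here I claim every vertex is joined to $0$ by a path. Any nonzero $x\in Z_n$ can be written $x=2^j u$ with $u$ odd and $0\le j\le k-1$, and then $T^i(x)=2^{j+i}u \bmod 2^k$. For $0\le i\le k-1-j$ the two consecutive iterates $T^i(x)$ and $T^{i+1}(x)$ differ by $2^{j+i}u$, which is $\not\equiv 0 \pmod{2^k}$ because $j+i\le k-1$ and $u$ is odd; hence they are distinct and joined by an edge. Since $T^{k-j}(x)=2^k u\equiv 0$, this exhibits a path $x\sim T(x)\sim\cdots\sim 2^{k-1}u\sim 0$ in the graph, so the graph is connected. (The degenerate cases $n=1=2^0$ and $n=2=2^1$ should be checked to sit comfortably inside this argument.)

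For the converse I would write $n=2^a m$ with $m$ odd and show that $m>1$ forces disconnectedness. Since $\gcd(2,m)=1$, multiplication by $2$ is a bijection of $Z_m$, so $Z_m$ decomposes into orbits of the cyclic group $\langle 2\rangle\le (Z_m)^\times$; because $m>1$, there is at least one orbit besides $\{0\}$, so $Z_m$ is not a single orbit. Let $\pi\colon Z_n\to Z_m$ be reduction mod $m$; it is surjective and satisfies $\pi(T(x))=2\pi(x)$. If $x\sim y$ in the graph then $y=2x$ or $x=2y$ in $Z_n$, hence $\pi(y)=2^{\pm1}\pi(x)$, so $\pi(x)$ and $\pi(y)$ lie in the same $\langle 2\rangle$-orbit; by transitivity $\pi$ maps each connected component into a single orbit. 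Were the graph connected, $Z_m=\pi(Z_n)$ would then be one orbit, a contradiction. In particular, when $n$ is odd and $n>1$ this is visible even more directly: $T$ is then a bijection of $Z_n$ and $0$ is an isolated vertex.

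The main obstacle is the choice of invariant in the converse. The raw functional graph of $x\mapsto 2x$ is a union of ``rho''-shaped pieces, and it is not obvious why odd prime factors of $n$ should split it into several components. Reducing mod the odd part $m$ so that the dynamics becomes an invertible map, whose orbits give a genuine obstruction to being in one component, is the crux; everything else is routine verification of the compatibility $\pi\circ T = 2\cdot\pi$ and of the edge conditions in the first part.
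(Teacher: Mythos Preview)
Your proof is correct and follows the same overall strategy as the paper: for $n=2^k$ show that every forward orbit reaches $0$, and for $n$ with an odd factor reduce modulo that factor to exhibit an obstruction. The paper's converse is terser---it picks an odd prime $p\mid n$ and observes that multiples of $p$ stay multiples of $p$ under $T$---whereas you reduce modulo the full odd part $m$, use that $x\mapsto 2x$ is a \emph{bijection} on $Z_m$, and conclude that connected components project into single $\langle 2\rangle$-orbits. Your formulation is more careful about the undirected nature of the edges (the paper only checks forward invariance explicitly, leaving the easy backward step implicit), and your first part verifies that consecutive iterates are actually distinct so that genuine edges are present; but the underlying idea in both directions is the same.
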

\begin{proof}
If $n$ is a power of $2$, then $T^k x = 2^k x$ is divisible by $n$ if 
$k$ is larger or equal than $n$. We see that every $x$ is eventually attracted by $0$ and
that the graph is a tree. If $n = p 2^m$ with $p$ being relatively prime to $2$, we can look 
at the orbit $T^k x = x 2^k$ modulo $p$. If $x$ is divisible by $p$, then $T^kx$ stays 
divisible by $p$ and the graph is not connected.
\end{proof}

If we look at $Z_n^*$, we get additionally some primes. Which ones? 
We see that $3, 5, 11, 13, 19, 29, 37, 53, 59, 61, 67, 83, 101, \dots $ lead to connected graphs while
$7, 17, 23, 31, 41, 43, 47, 71, 73, 79, 89, 97, 103, \dots $ lead to disconnected ones. 

\begin{propo}[Miniature I: Artin]
The graph on $Z_n^*$ generated by $T(x)=2x$ has one component if and only if $n$
is a power of $2$ or a prime $p$ for which $2$ is a primitive root.
\end{propo}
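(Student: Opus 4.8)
The plan is to read the connected components of the graph directly off the orbit structure of $T(x)=2x$ on $Z_n^*$, which here must mean the set of nonzero residues (on the unit group $x\mapsto 2x$ does not even preserve $Z_n^*$ once $n$ is even). Since $x$ and $y$ are adjacent exactly when $y=Tx$ or $x=Ty$, a connected component is a maximal set of vertices reachable by following $T$ forwards and backwards, so two $T$-orbits share a component precisely when some forward orbit of one meets the other. I would split the argument according to whether $n$ is a power of $2$, an odd prime, or neither.

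\textbf{Powers of $2$.} If $n=2^m$, write $x\in Z_n^*$ as $x=2^a u$ with $u$ odd and $0\le a\le m-1$. Then $T^{m-1-a}(x)\equiv 2^{m-1}u\equiv 2^{m-1}\pmod{2^m}$, and every term $2^{a+i}u$ with $0\le i\le m-1-a$ is a nonzero residue (its $2$-adic valuation $a+i$ being $<m$). Thus the forward orbit of \emph{every} vertex stays in $Z_n^*$ and reaches the common vertex $2^{m-1}$, so the graph is connected. This is the $Z_n^*$ analogue of Lemma~\ref{lemma1}; note that $T$ is highly non-injective here, so the graph is a forest funnelling into $2^{m-1}$, not a union of cycles.

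\textbf{Odd primes.} If $n=p$ is an odd prime, $\gcd(2,p)=1$, so $T$ is a permutation of $Z_p^*$ and its functional graph is a disjoint union of cycles, one per $T$-orbit. The orbit of $x$ is the coset $x\langle 2\rangle$, hence the number of components is $[Z_p^*:\langle 2\rangle]=(p-1)/\mathrm{ord}_p(2)$, which equals $1$ exactly when $\mathrm{ord}_p(2)=p-1$, i.e.\ when $2$ is a primitive root modulo $p$. This proves both directions for prime moduli and identifies the ``connected primes'' as precisely the primes for which $2$ is a primitive root, the primes counted in Artin's conjecture for the base $2$.

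\textbf{All other moduli, and the main point.} Any $n\ge 2$ that is neither a power of $2$ nor a prime is composite with an odd prime factor $p$, and then $p<n$, so $p$ is a nonzero residue. Partition $Z_n^*$ into the nonzero multiples of $p$ and the nonzero non-multiples of $p$: both blocks are nonempty (containing $p$ and $1$ respectively), and since $p$ is odd, $y=2x$ or $x=2y$ forces $p\mid x\iff p\mid y$, so no edge joins the two blocks; hence the graph is disconnected, completing the ``only if'' direction. I expect the only step requiring real care to be the powers-of-$2$ case: because $T$ is not injective there one cannot argue ``components equal cosets of $\langle 2\rangle$'' as for primes, and must instead check that each forward orbit stays inside $Z_n^*$ until it would land on $0$ and that it necessarily runs through the single vertex $2^{m-1}$ whose $T$-image is $0$. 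Everything else is a routine case split on the factorization of $n$ together with elementary modular arithmetic.
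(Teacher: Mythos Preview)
Your argument is correct and in fact more complete than the paper's own proof. The paper only treats the case where $n$ is an odd prime, observing that the orbit of $1$ under $T$ is $\langle 2\rangle$ and that this equals $Z_n^*$ precisely when $2$ is a primitive root; the power-of-$2$ case is left to the preceding Lemma (stated for $Z_n$, not $Z_n^*$), and the remaining composite moduli are not discussed at all. You cover all three cases explicitly: for $n=2^m$ you show every forward orbit funnels into the single vertex $2^{m-1}$ while staying in $Z_n^*$; for odd primes you identify the components as cosets of $\langle 2\rangle$ (which sharpens the paper's ``some $x$ has no discrete logarithm'' into a count of components); and for the residual case you give the clean divisibility-by-$p$ partition that the paper omits. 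The prime case is essentially the same idea in both, but your treatment of the other two cases is what actually closes the ``only if'' direction of the equivalence.
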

\begin{proof}
We look at the dynamics on $Z_n^* = Z_n \setminus \{0\}$, where $n$ is a prime. 
If $2$ is a primitive root modulo $n$, then by definition, 
the orbit $ \{ 2^k \; {\rm mod} \; n \; \}$ covers the multiplicative group $Z_n^*$
and the graph is connected. If $2$ is not a primitive root, then there exists $x$
for which the discrete logarithm problem $2^k = x \; {\rm mod} \; n$ has no
solution. The graph is not connected. 
\end{proof} 

{\bf Remarks.} \\
{\bf 1)} It is an open problem to determine the fraction of the set
of primes is for which $2$ is a primitive root. 
Among the first $n=10^6$ primes, 374023 have this property. 
A conjecture of Artin implies that this probability should converge to the 
{\bf Artin constant} $\prod_{p \; {\rm prime}} (1-1/(p (p-1))) = 0.3739558...$. \\
{\bf 2)} There are analogous results, when $2$ is replaced with an other prime.
What matters for prime $n$  whether $a$ is a primitive root in the field 
$Z_n$ or not. 

\begin{defn}
Given a finite set $P$ of primes, we call the set of all products 
$\{ \prod_{p_i \in P} p_i^{n_i} \; | \; n_i \geq 0 \; \}$ 
the set of all {\bf $P$-smooth numbers}. If $P$ is the set of $P$-smooth 
numbers, lets call the set $P \cup 2P$ the set of 
{\bf double $P$-smooth numbers}. If $a$ is an integer, we call the set of 
numbers which have only prime factors from $a$ to be {\bf $a$-smooth }.
Similarly, if $a,b$ are integers, the set of numbers which have only prime factors 
from $a,b$ are called $(a,b)$-smooth. 
\end{defn}

Double smooth numbers appear as the "connectivity locus" in the case $x \to 3x+1$: the graph is connected for
$n=1, 2, 3, 6, 9, 18, 27, 54, 81, \dots $ which is the set of double $\{3\}$ -smooth numbers,
numbers which are a power of $3$ or twice a power of $3$. We do not have a complete picture yet but state only: 

\begin{lemma}
The set of $n$ which are connected for $T(x)=ax+b$ is a subset of all $\{ a,a-1 \; \}$ smooth numbers.
\label{lemma2}
\end{lemma}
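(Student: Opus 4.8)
The plan is to prove the contrapositive: if a prime $p$ divides $n$ but $p \nmid a$ and $p \nmid (a-1)$, then the graph on $Z_n$ generated by $T(x)=ax+b$ is disconnected. The tool is reduction modulo $p$. The canonical surjection $\pi : Z_n \to Z_p$ is a ring homomorphism, so it intertwines $T$ with the affine map $\bar T(x)=\bar a x+\bar b$ on $Z_p$, where $\bar a, \bar b$ denote $a,b$ read modulo $p$; that is, $\pi(T x)=\bar T(\pi x)$ for all $x\in Z_n$. I will use this to split the vertex set of the graph on $Z_n$ into two nonempty pieces with no edge between them.

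Because $p\nmid(a-1)$, the element $1-\bar a$ is a unit in the field $Z_p$, so $\bar T$ has a unique fixed point $x^{\ast}=\bar b\,(1-\bar a)^{-1}\in Z_p$. Put $A=\pi^{-1}(x^{\ast})\subseteq Z_n$ and $B=Z_n\setminus A$. First, $A$ is forward invariant: if $x\in A$ then $\pi(Tx)=\bar T(x^{\ast})=x^{\ast}$, so $Tx\in A$. Second, nothing outside $A$ maps into $A$: if $Tx\in A$ then $\bar a\,\pi(x)+\bar b=x^{\ast}=\bar a\,x^{\ast}+\bar b$, hence $\bar a(\pi(x)-x^{\ast})=0$ in $Z_p$, and since $p\nmid a$ makes $\bar a$ a unit we get $\pi(x)=x^{\ast}$, i.e. $x\in A$. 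Consequently every edge $\{x,Tx\}$ (with $Tx\neq x$) has both endpoints in $A$ or both in $B$, so no edge joins $A$ to $B$; discarding self-loops and multiple edges changes nothing here, since $A$ and $B$ are separated as vertex sets. Finally $|A|=n/p\ge 1$ and $|B|=n(p-1)/p\ge 1$, so both are nonempty and the graph is disconnected.

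Applying the contrapositive: if the graph on $Z_n$ is connected, then every prime factor of $n$ divides $a$ or $a-1$, which is precisely the statement that $n$ is $\{a,a-1\}$-smooth. I do not foresee a real obstacle; the only point requiring a little care is the interplay with the finite-simple-graph conventions, which is why I argue via an explicit vertex partition with no crossing edges rather than via orbits or a graph-homomorphism image. One should keep in mind that the lemma gives only a necessary condition — the inclusion is generally strict, as the case $a=3$, $b=1$ shows, where the connected $n$ are merely the double $\{3\}$-smooth numbers, a proper subset of the $\{3,2\}$-smooth numbers.
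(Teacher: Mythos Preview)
Your proof is correct and follows essentially the same approach as the paper: reduce modulo a prime $p\mid n$ with $p\nmid a(a-1)$, locate the unique fixed point of the induced affine map on $Z_p$, and observe that its preimage in $Z_n$ is a proper nonempty set with no edges to its complement. Your version is in fact more complete than the paper's, which records only the forward invariance of the fixed congruence class and literally trails off mid-sentence; your explicit use of $p\nmid a$ to show that nothing outside the class maps into it is exactly what is needed to exclude crossing edges, and this is the step the paper glosses over.
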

\begin{proof}
If $q$ is a prime factor of $n$ which does not divide $a-1$, then the graph 
is not connected: there is a congruence class
modulo $q$ which is a fixed point of $T$. The reason is that $a x+b = x \; {\rm mod}(q)$
has a solution $x=-b (a-1)^{-1}$. 
For example, if $T(x) = 5x+1$ and $n=21$ which has a factor $q=3$, then the 
congruence class $2$ modulo $3$ is a fixed point of $T$. Since this
congruence class is invariant, the graph is not connected. 
If $a$ divides $n$, then $n=k a$ and $T^a$ produces a translation on $Z_q$
and there is a chance that we have several graphs. If $a$ is prime and 
does not divide $n$,
\end{proof}

Here are some results, where we denote by $(p_1,...,p_n)$ the $P=\{p_1,...,p_n\}$ smooth 
numbers and by  $(2^*,p_1,..p_n)$ the double $P$ smooth numbers. We computed the table by 
constructing the graphs, seeing which are connected and matching it with $P$ smooth sequences.

\begin{tabular}{|l|lllllll|} \hline
       & $b=0$    & $b=1$            &  $b=2$         & $b=3$           & $b=4$         & $b=5$               &  $b=6$ \\ \hline
$a=2$  & $(2)$    & $(2)$            &                &                 &               &                     &        \\
$a=3$  & $(3)$    & $(2^*,3)$        &  $(3)$         &                 &               &                     &        \\
$a=4$  & $(2)$    & $(2,3)$          &  $(2,3)$       & $(2)$           &               &                     &        \\
$a=5$  & $(5)$    & $(2,5)$          &  $(5)$         & $(2,5)$         & $(5)$         &                     &        \\
$a=6$  & $(2,3)$  & $(2,3,5)$        &  $(2,3,5)$     & $(2,3,5)$       & $(2,3,5)$     & $(2,3)$             &        \\
$a=7$  & $(7)$    & $(2^*,3,7)$      &  $(3,7)$       & $(2^*,7)$       & $(3,7)$       & $(2^*,3,7)$         & $(7)$  \\
$a=8$  & $(2)$    & $(2,7)$          &  $(2,7)$       & $(2,7)$         & $(2,7)$       & $(2,7)$             & $(2,7)$  \\ \hline
\end{tabular}

We always get smoothness sequences or double smoothness sequences involving the prime factors of $a$ and $a-1$.
For $T(x)=px$ with prime $p$, we have connectivity for $\{p\}$-smooth numbers $\{p,p^2,p^3, ... \}$. 
For $T(x)=px+1$ with prime $p$ we have connectivity for $\{p,(p-1)\}$ smooth numbers if $p-1$ is divisible by $4$
and double $\{p,P(p-1) \setminus \{2\} \}$-smooth numbers if $p-1$ is divisible by $2$.  \\

We should also look at the case $a=1$. Now $T^k(x) = x+k b$ and the graph is connected if and only if 
$b$ has no common divisor with $n$. If $R=Z_n^m$ and $T(\vec{x}) = x+ k \vec{b}$, then the graph is connected if 
all $b_i$ have no common divisor with $n$. This is the Chinese remainder theorem. 

\section{Quadratic maps and Fermat primes}

An other simple example of a dynamically generated graph is obtained with $T(x)=x^2$ on $Z_n$. 
In \cite{WolframState1}, this is attributed to a suggestion of Stan Wagon. How many components 
does the graph have? We see that for $n=2^k$ with $k \in N$, there are two components, the even 
and odd numbers.

\begin{defn}
A prime of the form $n=2^{2^k}+1$ is called a {\bf Fermat prime}. 
\end{defn} 

\begin{propo}[Miniature II: Fermat]
The graph on $Z_n^*$ is connected if and only if $n=2$ or if $n$ is a Fermat prime $n=2^{2^k}+1$.
\end{propo}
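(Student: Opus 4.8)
\emph{Proof proposal.} The plan is to view the object as the functional graph of the squaring map $\sigma\colon x\mapsto x^{2}$ on the finite abelian group $Z_n^{*}$, with orientations, loops and multiplicities deleted. Since every vertex of a functional digraph has out-degree one, each weakly connected component of the underlying undirected graph contains exactly one directed cycle (a fixed point counting as a cycle of length one); hence the graph is connected if and only if $\sigma$ has a single periodic orbit on $Z_n^{*}$. The degenerate cases $n=1,2$, where $Z_n^{*}$ is a single point, are immediate, so from now on assume $n\ge 3$.

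For $n=p$ an odd prime the group $Z_p^{*}$ is cyclic of order $p-1$. Choosing a primitive root and passing to discrete logarithms identifies $\sigma$ with the doubling map $k\mapsto 2k$ on $Z_{p-1}$, and Lemma~\ref{lemma1} --- or rather the dynamical picture in its proof, which isolates exactly when iterated doubling on $Z_m$ sweeps out a single orbit --- shows that this has one periodic orbit precisely when $p-1$ is a power of $2$. It then remains to recall the elementary fact that $2^{s}+1$ can be prime only when $s$ is itself a power of $2$: if $s=d\,e$ with $d>1$ odd, then $2^{e}+1$ is a proper divisor of $2^{s}+1$. Combined with the case $n=2$, this produces exactly the list in the statement.

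For composite $n$ one must check that no further $n$ occur. I would split $Z_n^{*}=A_{2}\times A$ into its $2$-Sylow subgroup $A_{2}$ and its maximal odd-order subgroup $A$. On $A_{2}$ iterated squaring collapses everything onto the identity, whereas $\sigma$ restricts to an automorphism of $A$; hence the periodic set of $\sigma$ is exactly $A$, and the number of periodic orbits of $\sigma$ on $Z_n^{*}$ equals the number of orbits of $x\mapsto x^{2}$ on $A$. Since the orbit of $x$ is contained in $\langle x\rangle$, a single orbit forces $A=\langle x\rangle$ to be cyclic of some odd order $m$, and then the orbit of a generator has size $\operatorname{ord}_m(2)\le\varphi(m)<m$ unless $m=1$; so $A$ is trivial, i.e. $\varphi(n)$ is a power of $2$, and one reads off the admissible $n$ from the prime factorisation of $n$ together with the multiplicativity of $\varphi$. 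The real work --- and the step I expect to be the main obstacle --- lies in exactly this general case: controlling the squaring dynamics on the non-cyclic unit groups and then matching the resulting list of admissible $n$ against the statement. The prime case, by contrast, is essentially an immediate corollary of Lemma~\ref{lemma1}.
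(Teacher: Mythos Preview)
Your treatment of the prime case is correct and is essentially the paper's argument: pass to discrete logarithms to convert $x\mapsto x^{2}$ on the cyclic group of order $p-1$ into $k\mapsto 2k$ on $Z_{p-1}$, invoke Lemma~\ref{lemma1}, and then recall that $2^{s}+1$ can be prime only when $s$ is itself a power of~$2$.

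The composite case, however, rests on a misreading of the notation. In this paper $Z_n^{*}$ denotes $Z_n\setminus\{0\}$, not the unit group $(Z/nZ)^{\times}$; this is stated explicitly in the proof of Miniature~I. Under your reading the proposition is simply false, and your own (correct) analysis of the squaring map on the abelian group $(Z/nZ)^{\times}$ shows why: the functional graph is connected exactly when the odd part $A$ is trivial, i.e.\ when $\varphi(n)$ is a power of~$2$. That admits $n=4,6,8,12,15,16,20,\ldots$ in addition to $2$ and the Fermat primes, so the ``matching'' step you flag as the main obstacle would not succeed---there is nothing to match.

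With the intended meaning $Z_n^{*}=Z_n\setminus\{0\}$ the composite case is the easy direction, not the hard one. If $p$ is any prime divisor of a composite $n$, then the set of nonzero multiples of $p$ in $Z_n$ is nonempty (it contains $p$), proper (it omits $1$), and has no edge to its complement, since for a prime $p$ one has $p\mid x\iff p\mid x^{2}$. Hence it is a union of connected components and the graph is disconnected. This is exactly the paper's one-line reduction to the prime case, after which your argument (equivalently the paper's) applies verbatim.
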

\begin{proof}
If $n$ is not prime $n=pq$, then the orbit of $x=p$ has the property that every point
$T^k(x)$ is divisible by $p$ modulo $n$ and the graph is not connected. 
We can therefore assume that $n$ is a prime. 
This allows find a primitive root $a$ and write $Z_n^* = \{a^k\}$.
If $n$ is a Fermat prime, then the graph is a tree centered at $1$. The
elements $1$ and the quadratic non residues have one neighbor, while the
quadratic residues have $3$ neighbors $x^2, \pm \sqrt{x}$.
If $n$ is not a Fermat prime, then $n-1$ is not a power $2$ and has
therefore a factor $q$ different from a power of $2$. We look now on the 
dynamics of the system $x \to 2x$ modulo $q 2^l$.  We have seen in Lemma~(\ref{lemma1})
that the graph is connected if and only if $q=1$. In other words, the graph is connected if
and only if $n$ is a Fermat prime. 
\end{proof} 

The only Fermat primes known are $F_0,F_1,F_2,F_3,F_4$. 
It would be interesting to know the Euler characteristic of the graphs $G_n$ generated 
by $T(x)=x^2$ on $Z_n$.  The list of Euler characteristics starts with 
$$ 1, 2, 2, 2, 2, 4, 3, 2, 3, 4, 2, 4, 3, 6, 4, 2, 2, 6, 3, 4, 6, 4, 2, ...  \; . $$
The graphs do not need to be simply connected. An example is $G_{59}$, a case with $3$ components
and one large cycle. To get triangles, we have to solve $((x^2)^2)^2=x^8=x$ which is only possible if 
$x-0$ or $x^7-1$ is a multiple of $n$. Indeed, for $n=127$ we get the first such graph with $2$
triangles. Since graphs generated by one map never has a tetrahedron, the Euler characteristic
of a graph $G_n$ is $v-e+f$, where $v$ is the number of vertices, $e$ the number of edges and 
$f$ the number of triangles.  \\

If we take $T(x)=x^3$, we see no nontrivial graphs with $1$ or $2$ components. 
Graphs with $n=3^k$ have three components.  
For $T(x)=x^n$ with even $n$ we have the same list of graphs with $2$ components
as in the case of $n=2$. For $n=5$, the list of integers on which $T(x)=x^5$  has 
a graph with $3$ components starts with $3,4,11,251, \dots  $. \\

In the case $T(x) = 2^x$, the distribution of the number of components is smaller
and we measure about $M/\log(M)$ graphs among all graphs with $n=1,\dots ,M$ which have
one component. Now, we can ask for which $n$ the graphs generated by $x \to 2^x$ 
are connected. 

\begin{figure}
\scalebox{0.30}{\includegraphics{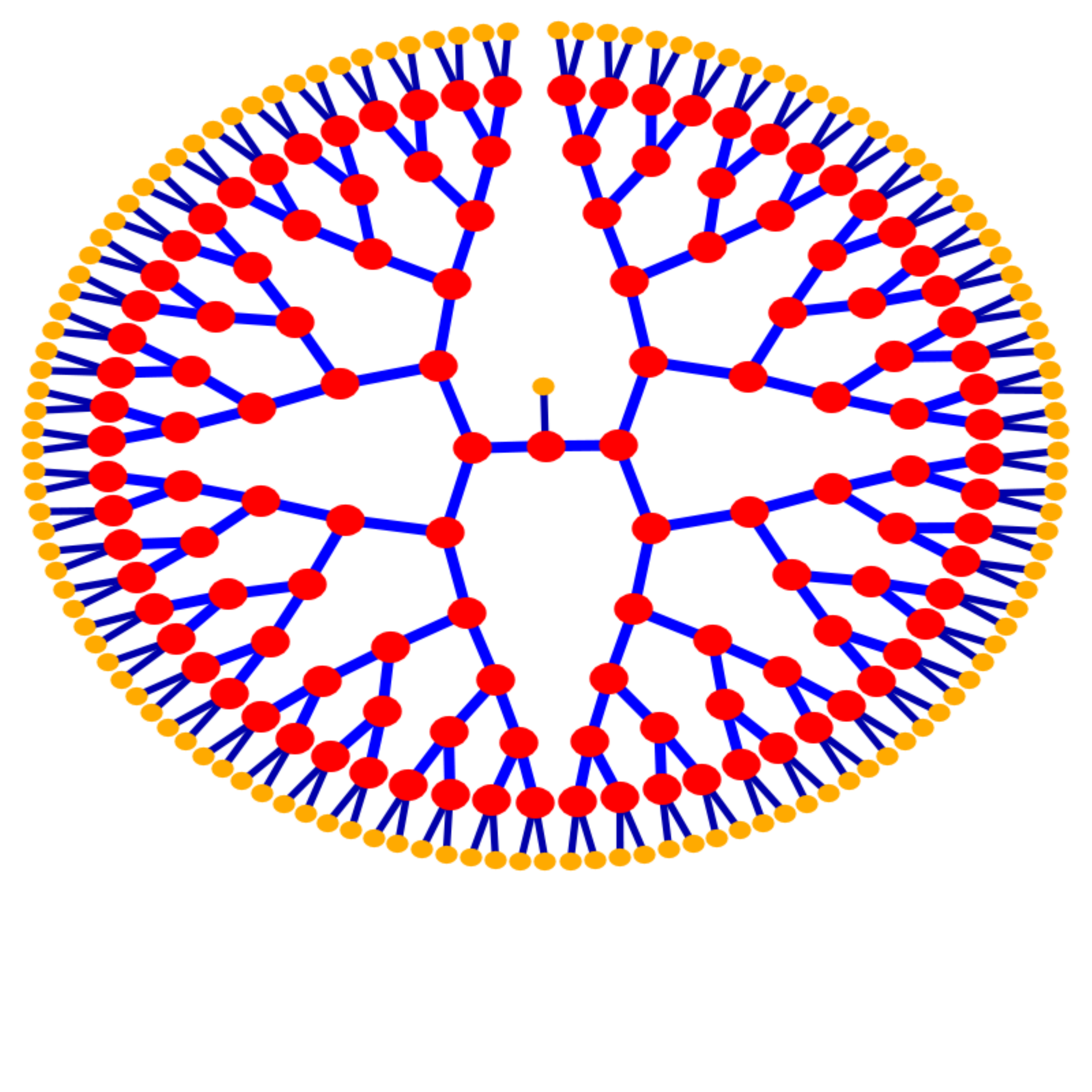}}
\caption{
The Fermat graph for $F_3 = 2^{2^3}+1=257$.
}
\end{figure}

\begin{figure}
\scalebox{0.30}{\includegraphics{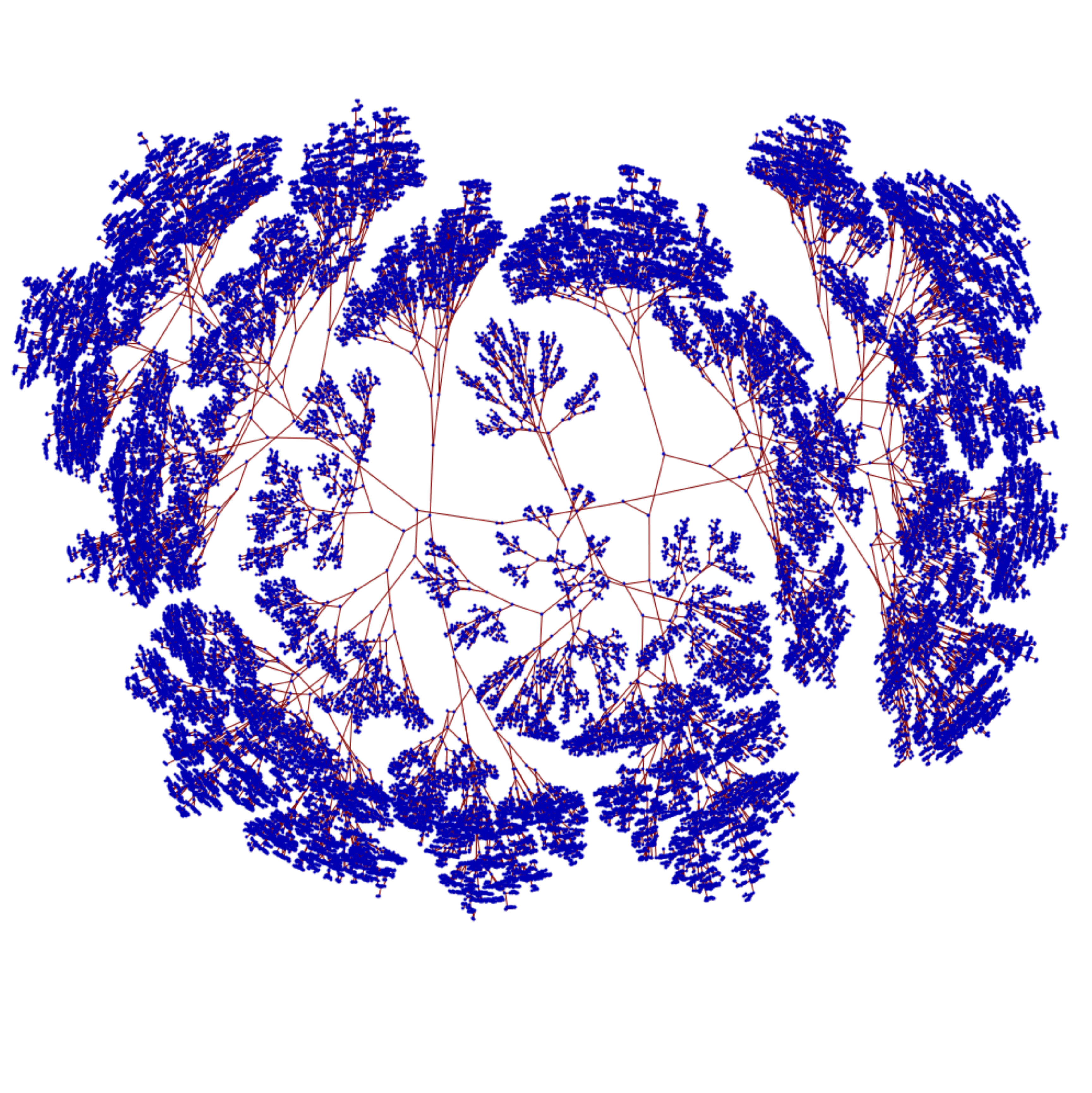}}
\caption{ 
The Fermat graph for the largest known Fermat prime $F_4 = 2^{2^4}+1=65537$. }
\end{figure}

\section{A class of Collatz type networks}

This example is inspired by the famous Collatz $3x+1$ problem,  a
"prototypical example of an extremely simple to state, extremely hard to solve, problem" to cite
\cite{lagarias}. It deals with two maps: a number $n$ is divided by $2$ if it is even
and mapped to $3n+1$ if it is odd. The problem is whether every starting point converges
to $1$ when applying this rule. We can look at networks generated by affine maps
$T(x) = 2x$ and $S(x) = 3x+1$ on $Z_n$, disregarding the conditioning  and ask about the structure 
of this network. Of course, we can not expect any trivial relations with the Collatz problem any more. 
The graphs look surprisingly random. It is the non-commutativity of the monoid generated by 
$T,S$ as well as the conditional application of these two maps which makes the 
original Collatz problem difficult. When the graph is generated by $T(2x+1)=6x+4, T(2x)=2x, S(2x)=x/2,
S(2x+1)=x$, then this leads to the Collatz graph which is believed to be a tree. Note that the original
Collatz problem is generated by one map. We look at the two affine maps unconditionally. 

\begin{figure}
\scalebox{0.22}{\includegraphics{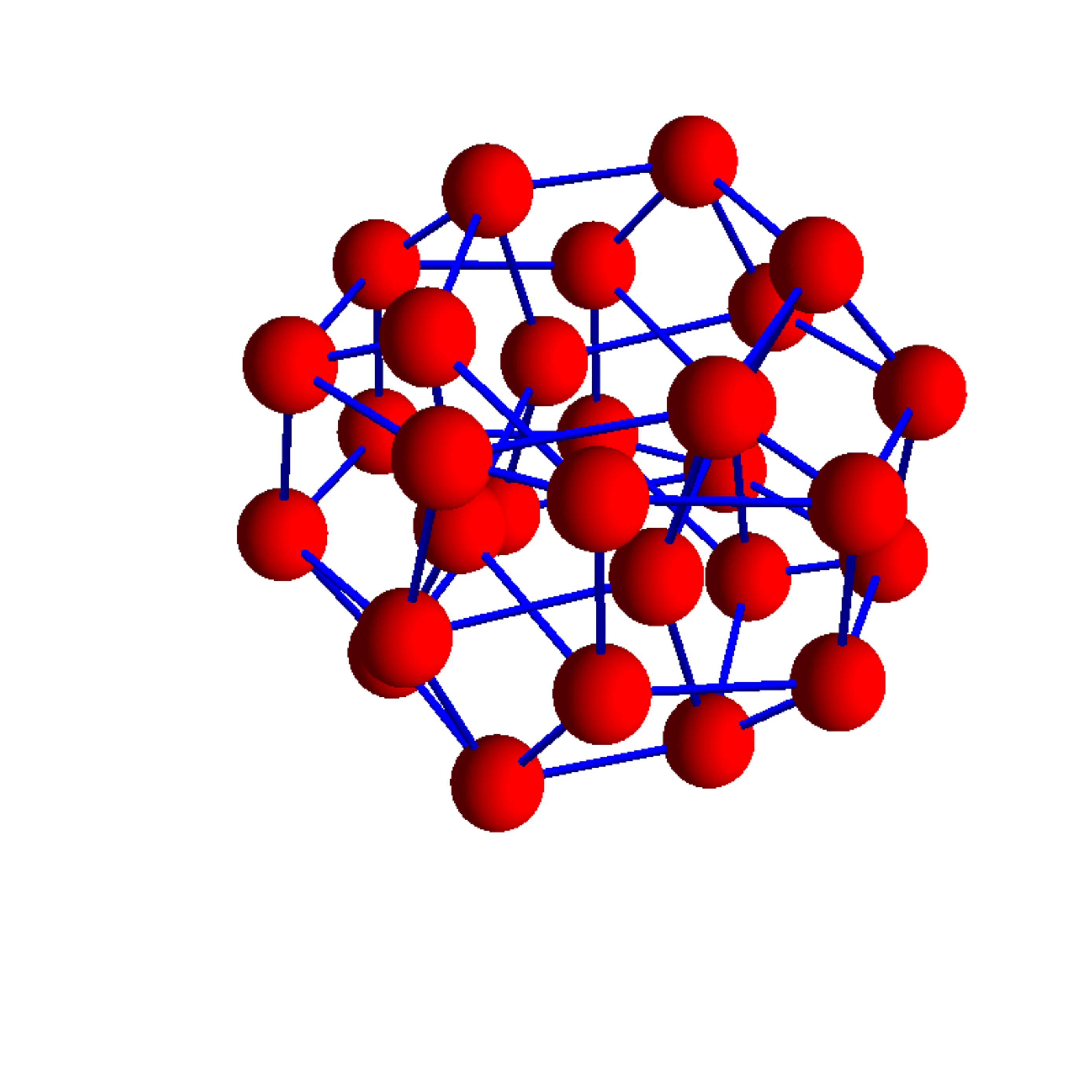}}
\scalebox{0.22}{\includegraphics{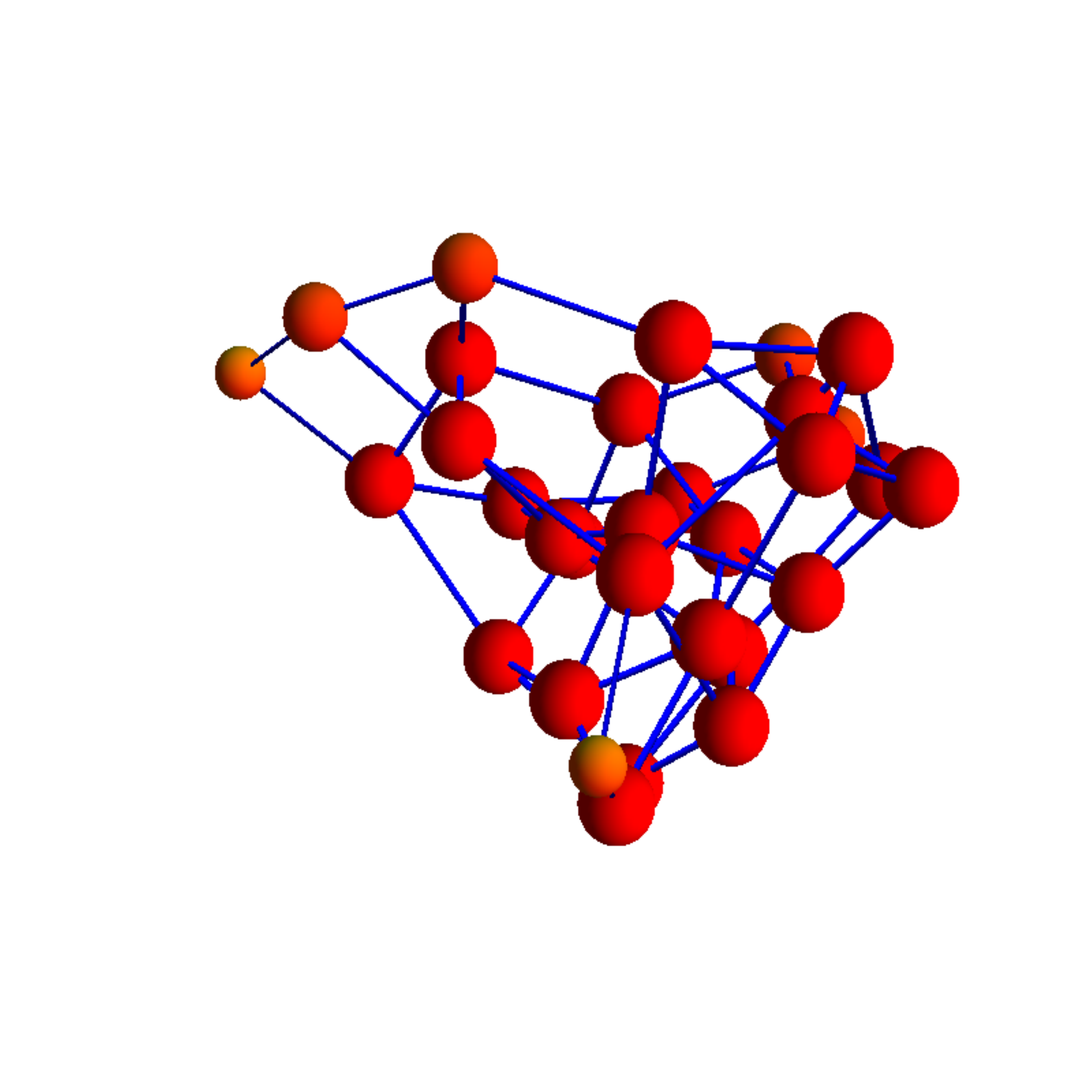}}
\caption{
The first graph shows the graph generated by $T(x)=2x,S(x)=3x$ on $Z_{31}$. This is
a Cayley graph on the multiplicative group. The second shows the
Collatz graph generated by $T(x)=2x,S(x)=3x+1$ on $Z_{31}$. The symmetry is broken:
the maps $T,S$ do not commute any more. }
\end{figure}

We see experimentally that all networks $C_n$ on $Z_n^*$ generated by the 
dynamical system $(Z_n,T(x)=3x+1,S(x)=2x)$ are connected. This is not related 
to the Collatz problem, where connections are only done under conditions leading to a digraph
called Collatz graph which if the Collatz conjecture is true, is a connected tree. 
The connectivity question is interesting for general affine maps. For $d \geq 2$-dimensional
graphs, non-connectivity (ignoring the isolated vertex $0$) 
is more rare but still can happen. For the graph generated by $5x+2,3x+1$ for example, we have for prime $n$
about $2/3$ connected graphs and $1/3$ disconnected; the number of connected components looks exponentially
distributed. For the graph generated by $5x+1,3x+1$ we appear to have connectivity for all primes. \\

The connectivity question can be seen as a non-commutative Chinese remainder theorem problem because we want to 
solve $T^{n_1} S^{n_2} \dots T_{n_k} S_{n_k} x = y$. In the commutative case, this reduces to
$T^n S^m x = y$. An example is $T(x) = a x$ and $S(x)=bx$ where the maps commute.
It is enough to assure that for every $x$ we can find $u,v$ such that $a^u b^v = x {\rm mod}(n)$. 
If either $a$ or $b$ is a primitive root of unity, then $b=a^l$ and we have the problem to solve
$a^{u+lv} = x$, which is no problem already for $v=0$.  \\

Here is an amusing fact for Collatz networks: 

\begin{propo}[Miniaturet III: Collatz]
All Collatz networks $C_n = (Z_n,2x,3x+1)$ have exactly $4$ triangles 
if $n$ is prime and larger than $17$. 
\end{propo}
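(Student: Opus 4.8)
\emph{Proof idea.} The plan is to enumerate the triangles of $C_n$ directly, organizing the search by how many of a triangle's three edges come from $S(x)=3x+1$ as opposed to $T(x)=2x$. For prime $n>17$ both $T$ and $S$ are bijections of $Z_n$ (as $2,3$ are units), the only self-loop of $T$ is at $0$ and the only one of $S$ is at $-\tfrac12$, and the edge set is $\{\{x,2x\}:x\neq 0\}\cup\{\{x,3x+1\}:x\neq-\tfrac12\}$. A preliminary bookkeeping step: an edge is simultaneously a $T$-edge and an $S$-edge only for the pairs $\{-1,-2\}$ and $\{-\tfrac15,-\tfrac25\}$, and for $n>17$ neither of these two edges, nor the vertex $0$, lies in any triangle, since in each case a common neighbour would force $n$ to divide one of finitely many small integers. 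Hence for $n>17$ every triangle has three edges each of a well-defined type, and we may treat the four cases $k=0,1,2,3$ separately, where $k$ is the number of $S$-edges of the triangle.

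In the monochromatic cases there are no triangles. If $k=0$, we may assume one edge is $\{a,2a\}$; chasing the other two $T$-edges around the triangle forces either a repeated vertex or $7a\equiv 0$, hence $a\equiv 0$ for prime $n>7$, a contradiction. If $k=3$, the three vertices must be $x,S(x),S^2(x)$ and the closing edge forces $S^{3}(x)=x$ (or $S^{\pm1}(x)=x$); from $S^{3}(x)=27x+13$ this gives $26x\equiv -13$, so $x=-\tfrac12$ for $n>17$, again the forbidden self-loop point.

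The two triangles with $k=1$: if $\{a,b\}$ is the $S$-edge and $c$ the vertex shared by the two $T$-edges, then up to swapping $a,b$ we must have $c=2a$, $b=4a$, so the triangle is $\{a,2a,4a\}$ with $\{a,4a\}$ an $S$-edge, i.e.\ $4a=3a+1$ or $a=3(4a)+1$; these give $a=1$ and $a=-\tfrac1{11}$, hence $\{1,2,4\}$ and $\{-\tfrac1{11},-\tfrac2{11},-\tfrac4{11}\}$. Symmetrically, for $k=2$: if $\{a,b\}$ is the $T$-edge and $c$ the vertex shared by the two $S$-edges, then up to swapping the triangle is $\{a,S(a),S^2(a)\}=\{a,3a+1,9a+4\}$ with $\{a,9a+4\}$ a $T$-edge, i.e.\ $9a+4=2a$ or $a=2(9a+4)$, giving $a=-\tfrac47$ and $a=-\tfrac8{17}$, hence $\{-\tfrac47,-\tfrac57,-\tfrac87\}$ and $\{-\tfrac8{17},-\tfrac7{17},-\tfrac4{17}\}$. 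For each of the four solutions one checks that, when $n>17$ is prime, the three listed residues are distinct and no listed edge is a self-loop, so each is an honest triangle; and one checks the four are pairwise distinct (the first two form geometric triples of ratio $2$ with $\{T,T,S\}$ edge-types, the last two have $\{T,S,S\}$ edge-types and successive differences in ratio $3$, so the two families are disjoint, and two members of the same family coincide only if $n$ divides a fixed small integer). Together this gives exactly these four triangles.

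The computations are all elementary; the delicate point is not any single identity but the exhaustiveness of the case split --- in particular ensuring that no triangle is missed or double-counted because one of its edges happens to be a double edge, which is precisely what the preliminary remark about $\{-1,-2\}$ and $\{-\tfrac15,-\tfrac25\}$ handles --- together with keeping track of the finitely many ``bad'' primes, all $\leq 17$ (for instance $S^{3}=\mathrm{id}$ on $Z_{13}$ creates many monochromatic triangles, and mod $17$ the fourth triangle above degenerates), which is exactly where the hypothesis $n>17$ comes from.
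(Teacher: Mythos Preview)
Your argument is correct and follows essentially the same route as the paper: classify triangles by which of $T,S$ contribute the three edges, observe that the pure $T^3$ and $S^3$ cases collapse (to $7x=0$ and $26x=-13$ respectively), and solve the resulting linear equations in the mixed cases to extract four triangles. The paper frames this as running through the $4^3=64$ words in $\{T,S,T^{-1},S^{-1}\}$ of length $3$, notes the coefficients are bounded by $27$, and then disposes of $p=19,23$ by direct check; your version is more explicit in that you organise by $k=\#\{S\text{-edges}\}$, name the four triangles, and handle the bookkeeping (double edges $\{-1,-2\},\{-\tfrac15,-\tfrac25\}$, distinctness of the four solutions, non-degeneracy) that the paper leaves implicit.
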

\begin{proof}
Since $Z_p$ is a field if $n=p$ is prime, a linear equation modulo a prime can be solved
in a unique way. Let $T(x)=2x$ and $S(x)=3x+1$. Since $T^3(x)=8x=x$ has only the solution $x=0$
and $S^3(x) = 13+27x=x$ has a unique solution which is already a solution of $S(x)=x$, 
we see that every triangle must be formed with $2$ maps $T$ and one map $S$ or two maps $S$ 
and one map $T$. In each of the four possible cases $T^2(S(x))$ and $T(S(T(x))$
and $S^2 T(x)$ $STS(x))$ we have exactly one solution. 
Now for small primes, there are either less or more solutions.
For $n=13$ for example, the equation $S^3(x)=x$ is solved by any $x$. There are $6$ triangles. 
The combinations of three maps from $T,S,T^{-1},S^{-1}$ produces $4^3=64$ possible polynomials.
The largest coefficient which appears is $27$. For $p>27$ there are exactly $4$ solutions. 
We check by hand that also for $p=19,23$, there are exactly $4$ solutions. 
\end{proof}

\begin{figure}[H]
\scalebox{0.43}{\includegraphics{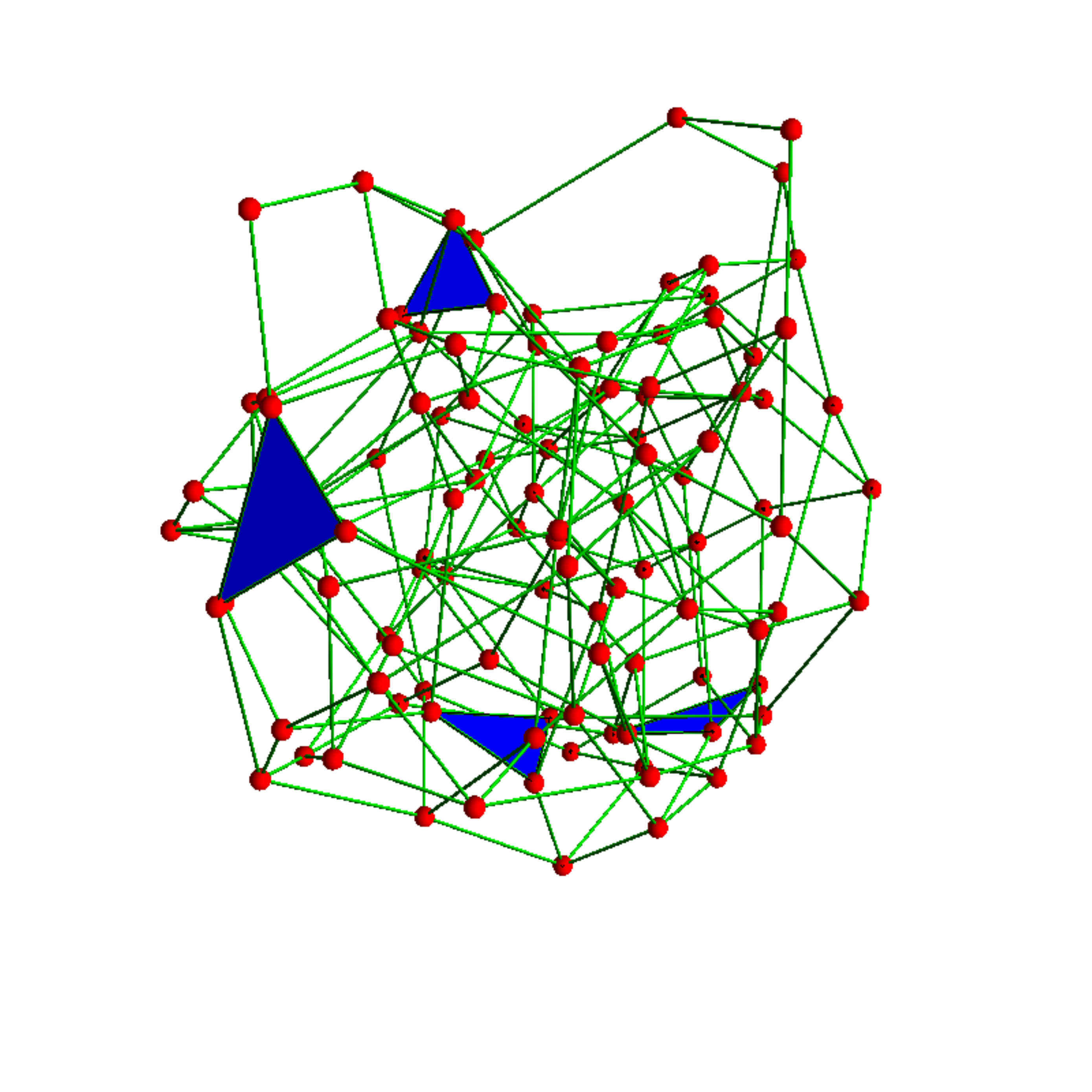}}
\caption{
The Collatz graph $C_{113}$ generated by $T(x)=3x+1,S(x)=2x$ on $Z_{113}$ with the four triangles.
}
\end{figure}

{\bf Remark}. This result is neither special nor universal. The dynamical graph generated by 
$2x+1,3x+1$ also has $4$ triangles for prime $n>17$.
Similarly, the graph generated by $5x+1,3x+1$, but the one generated by $T(x)=5x+2,S(x) =3x+1$ has
no triangles for prime $n>37$. Why? For $n=41$ for example, we have $T(S(T(20))=20$ but 
also $T(20)=20$ and $S(20)=20$. For prime $n$, the point $(n-1)/2$ is a fixed point of both $T$ and $S$
so that $x,T(x),S(T(x))$ is not a triangle but a point. 
What about non-prime $n$? It can be subtle for $n=pq$ already, where we can see different numbers
of triangles.  \\

The statistics of the Collatz networks on $Z_n$ generated by $T(x)=3x+1,S(x)=2x$ is typical 
among other dynamical networks. We see that the mean path length-mean cluster coefficient relation
$\lambda(C_n) = - \mu(C_n)/\log(\nu(C_n))$ converges for $n \to \infty$. 
The dimension of the Collatz graphs converges to $1$ for $n \to \infty$ as the number 
of triangles are rare. It would be nice to know the exact number of triangles in the general
case (not only for primes). This might gives hope that we might be able to
compute the Euler characteristic of $C_n$ exactly. 

\section{Arithmetic functions}

The map $T(x) = \sigma(x)-x$ gives the sum of the proper divisors of $x$. Fixed points of 
$T$ are called {\bf perfect numbers}. While the structure of even perfect numbers is known it is 
an ancient problem whether there are odd perfect numbers. 
Periodic points of period $2$ are {\bf amicable numbers}. It is not known whether there are infinitely
many.  The dynamical system generated by $T$ on $\Z$ is the Dickson dynamical system \cite{dicksonI}. 
We call the graphs on $Z_n$ generated by $T$ Dickson graphs. Their topological properties depend 
on number theoretical properties. We can now play with other graphs like graphs generated by $T(x)$
and $S(x)=x+1$. 

\begin{figure}[H]
\scalebox{0.35}{\includegraphics{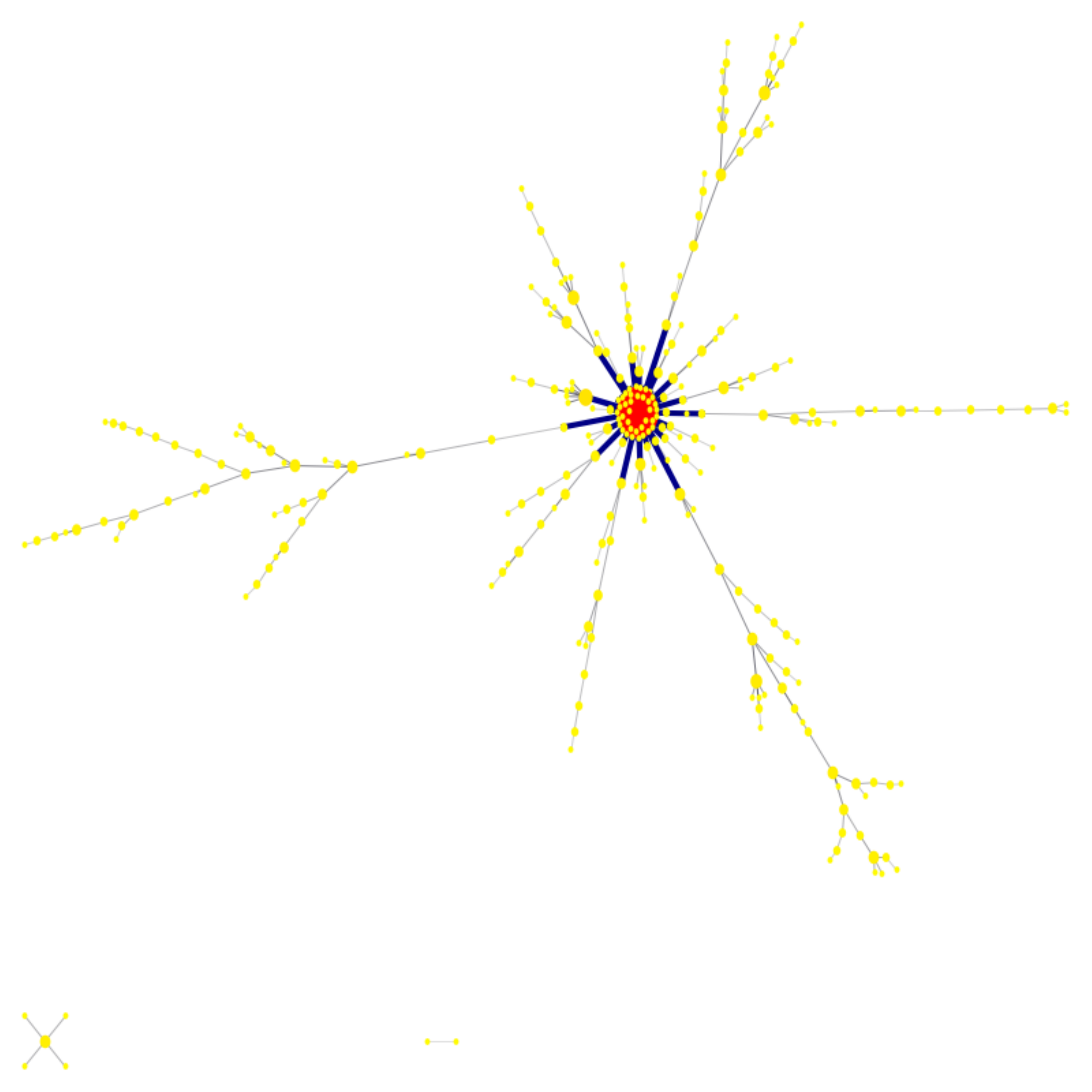}}
\scalebox{0.35}{\includegraphics{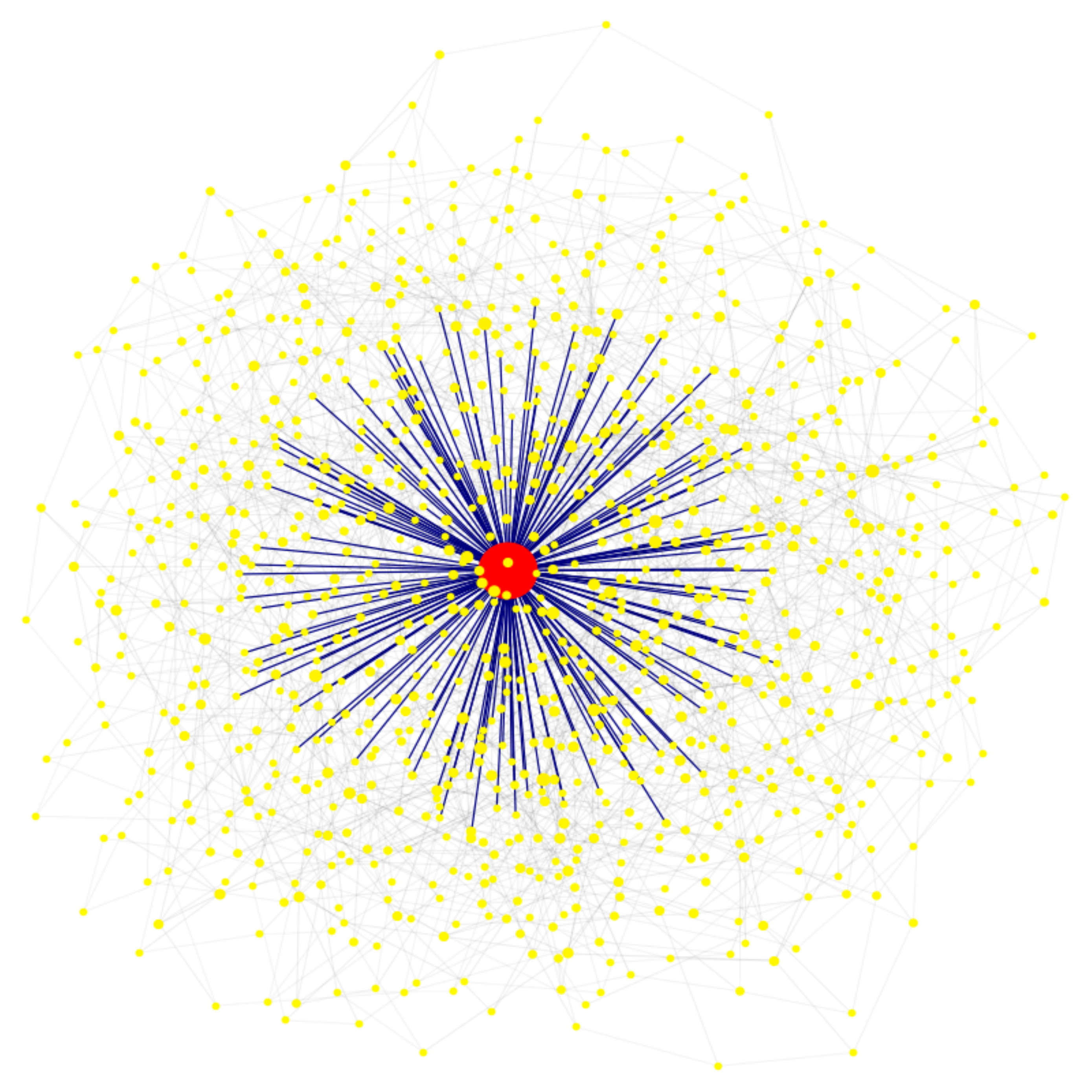}}
\caption{
Dickson graphs: the first graph is generated by $T(x)=\sigma(x)-x$. 
The second is the graph generated by by $T$ and $S(x)=x+1$.
}
\end{figure}

As the Dickson dynamical system illustrates, questions involving the iteration of arithmetic
functions can be difficult. The oldest known open problem in mathematics is related to it.  

\section{Pierpont primes}

One of the simplest nonlinear cases with two dimensional time 
is $T(x) = x^2$ and $S(x) = x^3$ on $Z_n$. This is a case where the monoid is commutative. 
One can ask, for which $n$, the graph on $\{2, \dots, n-1\}$ generated by $x^2,x^3$ is connected.
This can be answered because $T(x) = x^2$ and $S(x)=x^3$ commute. We have 
$T^n(S^m(x)) = x^{2^n 3^m}$. 

\begin{defn}
A prime $n$ is called a Pierpont prime if is of the 
form $2^t 3^s+1$, where $s,t$ are integers. 
\end{defn}

The list of Pierpont primes starts with 
$$  2,3,5,7,13,17,19,37,73,97,109,163,193,257,433,487,577, ... \; . $$
It is unknown whether there are infinitely many.  Gleason has shown that these primes play 
an important role with ruler, compass and angle trisection \cite{gleason88}. 
He was also the first to conjecture that there are infinitely many Pierpont primes.
Here is an elementary proposition which illustrates the relation 
between arithmetic and graph theory: 

\begin{propo}[Miniature IV: Pierpont]
The graph on $Z_n^*$ generated $T(x)=x^2$ and $S(x)=x^3$ is connected 
if and only if $n$ is a Pierpont prime. 
\end{propo}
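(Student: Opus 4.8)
The plan is to treat the two implications separately and, for the forward direction, to reduce first to the case of prime $n$ and then to a question about the additive cyclic group. Suppose first that the graph on $Z_n^*$ is connected and, towards a contradiction, that $n$ is composite. Let $U \subset Z_n^*$ be the set of units of $Z_n$. Then $U$ is nonempty (it contains $1$) and proper (it omits every prime divisor of $n$), and it is a union of connected components: if $u$ is a unit then so are $u^2$ and $u^3$, and conversely if $x^2$ or $x^3$ is a unit then so is $x$; hence no edge of the graph joins $U$ to its nonempty complement. This contradicts connectivity, so $n = p$ must be prime. (This is exactly the obstruction used in Miniature II.)

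With $n = p$ prime, fix a primitive root $g$ modulo $p$ and transport the graph along $k \mapsto g^k$ from the additive group $Z_m$, $m = p - 1$, onto $Z_p^*$. Since $T(g^k) = g^{2k}$ and $S(g^k) = g^{3k}$, the graph on $Z_p^*$ is isomorphic to the graph $\Gamma_m$ on $Z_m$ in which each $k$ is joined to $2k$ and to $3k$. If $p - 1 = 2^t 3^s$, then for every $x \in Z_p^*$ Fermat's little theorem gives $T^t(S^s(x)) = x^{2^t 3^s} = x^{p-1} = 1$, and reading this composition as the walk $x, x^3, x^9, \dots, x^{3^s}, x^{2\cdot 3^s}, \dots, x^{p-1} = 1$ along edges shows that every vertex is joined to $1$; so the graph is connected. (The case $p = 2$ is the one-vertex graph, and $2 = 2^0 3^0 + 1$ is Pierpont, so it is covered too.) Since a prime $p$ for which $p-1$ is $\{2,3\}$-smooth is automatically of the form $2^t 3^s + 1$, this proves the ``if'' direction.

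For the remaining implication, suppose that $p - 1$ has a prime factor $q \notin \{2,3\}$, so $q \geq 5$, and consider the reduction homomorphism $\pi \colon Z_m \to Z_q$. As $q$ divides neither $2$ nor $3$, the maps $k \mapsto 2k$ and $k \mapsto 3k$ descend to maps of the same form on $Z_q$, so $\pi$ is a surjective graph homomorphism from $\Gamma_m$ onto the analogous graph $\Gamma_q$ on $Z_q$. A surjective graph homomorphism sends a connected graph onto a connected graph, so it suffices to note that $\Gamma_q$ is disconnected: the vertex $0$ is isolated, because $2k \equiv 0$ or $3k \equiv 0 \pmod q$ forces $k \equiv 0$ when $q$ is a prime coprime to $6$, while $Z_q$ has other vertices. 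Hence connectivity of the original graph forces $p - 1$ to be $\{2,3\}$-smooth, i.e. $p$ to be a Pierpont prime.

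I expect the delicate point to be the interplay of the primitive-root translation with the direction of the homomorphism argument: one must use that connectedness is pushed \emph{forward} along the surjection $\pi$ (a path in $\Gamma_m$ maps to a walk in $\Gamma_q$), and must check honestly that $0$ is isolated in $\Gamma_q$ rather than merely a fixed point that still receives incoming edges. By contrast, the composite case is routine once phrased via units, and the ``if'' direction is immediate from Fermat's little theorem.
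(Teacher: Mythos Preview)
Your proof is correct and follows essentially the same route as the paper: reduce the composite case to an invariant proper subset, pass to the additive group $Z_{p-1}$ via a primitive root, and then argue that the maps $k\mapsto 2k$, $k\mapsto 3k$ connect everything precisely when $p-1$ is $\{2,3\}$-smooth. Your packaging differs only cosmetically---you use the unit group rather than the set of multiples of a prime for composite $n$, and you phrase the obstruction for the extra prime $q\ge 5$ as a surjective graph map onto $\Gamma_q$ with $0$ isolated, whereas the paper invokes Lemma~\ref{lemma2}; your version is in fact the more honest one, since Lemma~\ref{lemma2} as stated treats a single affine map and needs the obvious extension to two maps that you spell out.
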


\begin{proof} 
If $n=pq$, then any multiple of $p$ will remain a multiple of $p$ after applying $T$ or $S$. 
This implies that the graph is not connected. We therefore need that $n$ is prime. 
If $n$ is prime, then the multiplicative group of $Z_n$ is cyclic. A generator is a primitive
root. We can now look at indices (discrete logarithms) and get maps $x \to 2x$ and $x \to 3x$ on the 
multiplicative group $\{1, \dots ,n-1 \; \}$. 
Either $n$ is even and equal to $2$ in which it is a Pierpont prime or $n$ is odd and of the form 
$2^t k + 1$, where $k$ is an integer. 
The proof is concluded with two things: (i) if $k=3^s$, then the graph is connected. (ii) If $k$ has a 
factor different from $2$ and $3$, then the graph is disconnected. 
Part (i) is clear because the maps $U(x) = 2x, V(x) = 3x$ on $Z_{n-1} = Z_{2^t 3^s}$ eventually lead to 
$0$ modulo $2^t 3^s$, showing that the graph is connected. 
Part (ii) follows from Lemma~(\ref{lemma2}). 
\end{proof} 

This can be generalized.  For example a graph generated by $x^2,x^5$ is connected if and only if the primes are
of the form $2^t 5^u+1$. The sequence of these primes is $2,3,5,11,17,41,101,...$. \\

\begin{figure}[H]
\scalebox{0.34}{\includegraphics{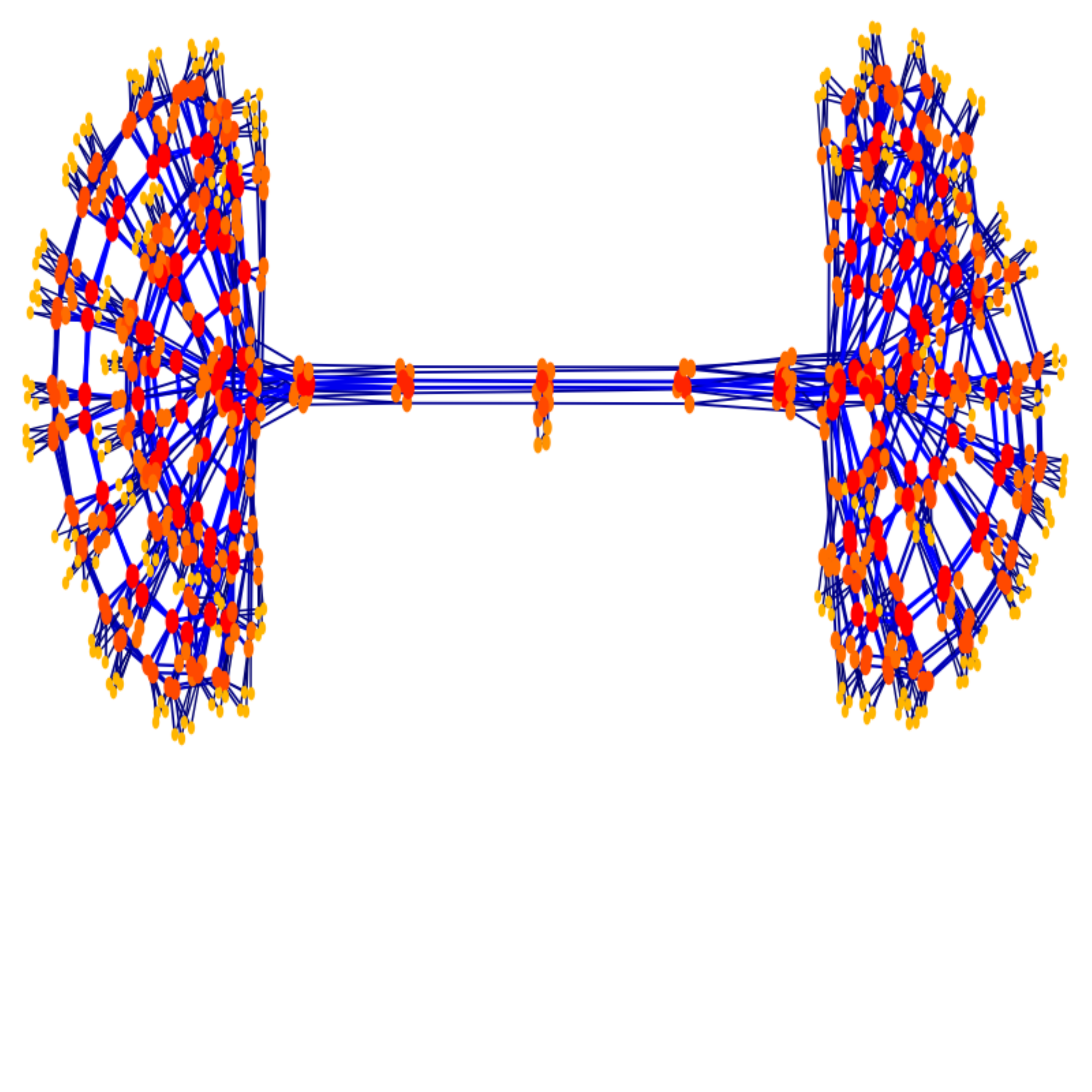}}
\scalebox{0.34}{\includegraphics{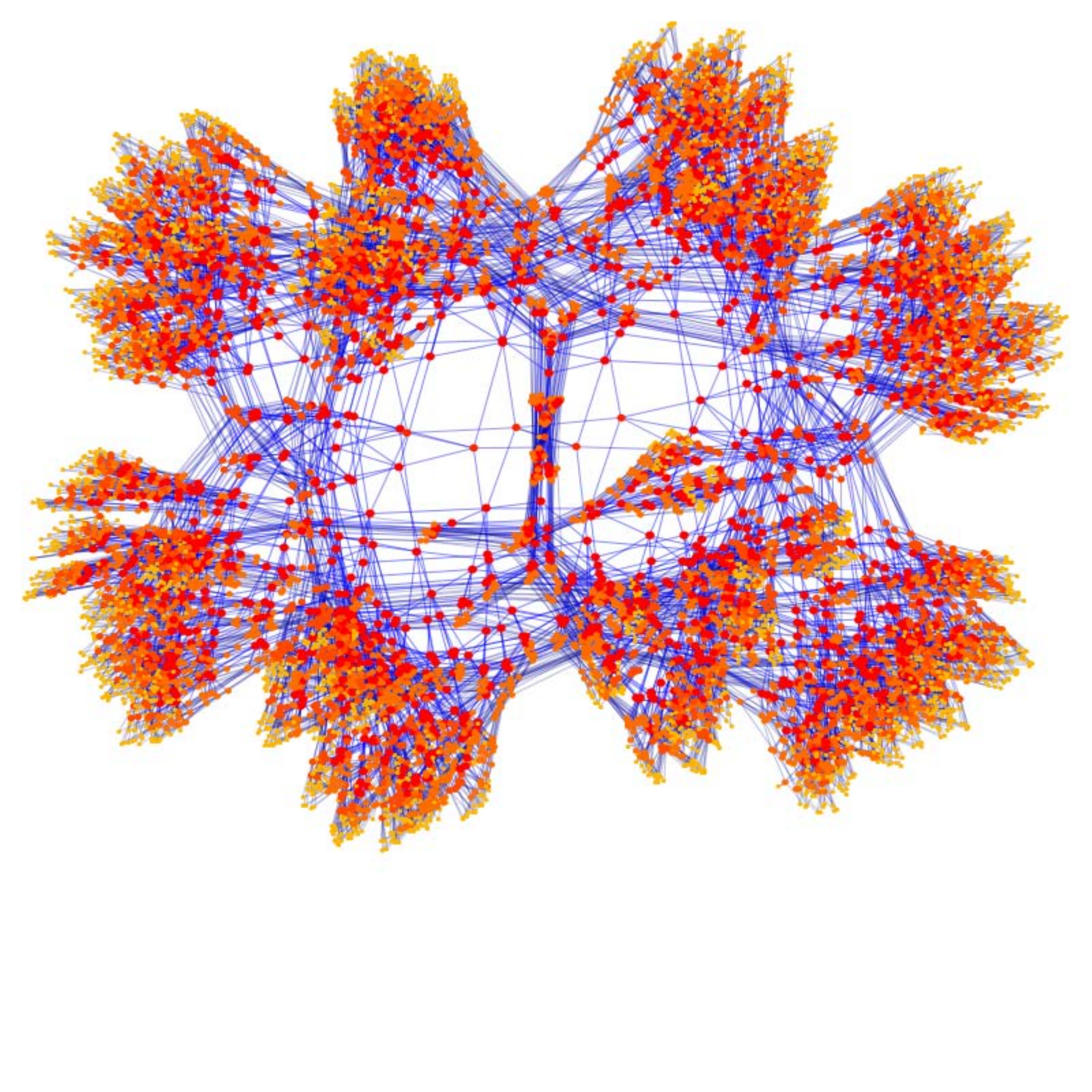}}
\caption{ 
Pierpont graphs generated by $x^2,x^3$ are connected on $Z_n^*$ if $n$ is a 
Pierpont prime. We see three cases: $n=769$ and $n=10369$. 
}
\end{figure}

\section{Rings with more structure}

Lets take the non-commutative ring $R=M(2,Z_n)$. If $T(x)=x^2$, then since diagonal matrices 
are left invariant by $T$ we can not have one component but have at least two components in the 
graph. We need a Fermat prime $n$ to have two components. 

\begin{figure}
\scalebox{0.32}{\includegraphics{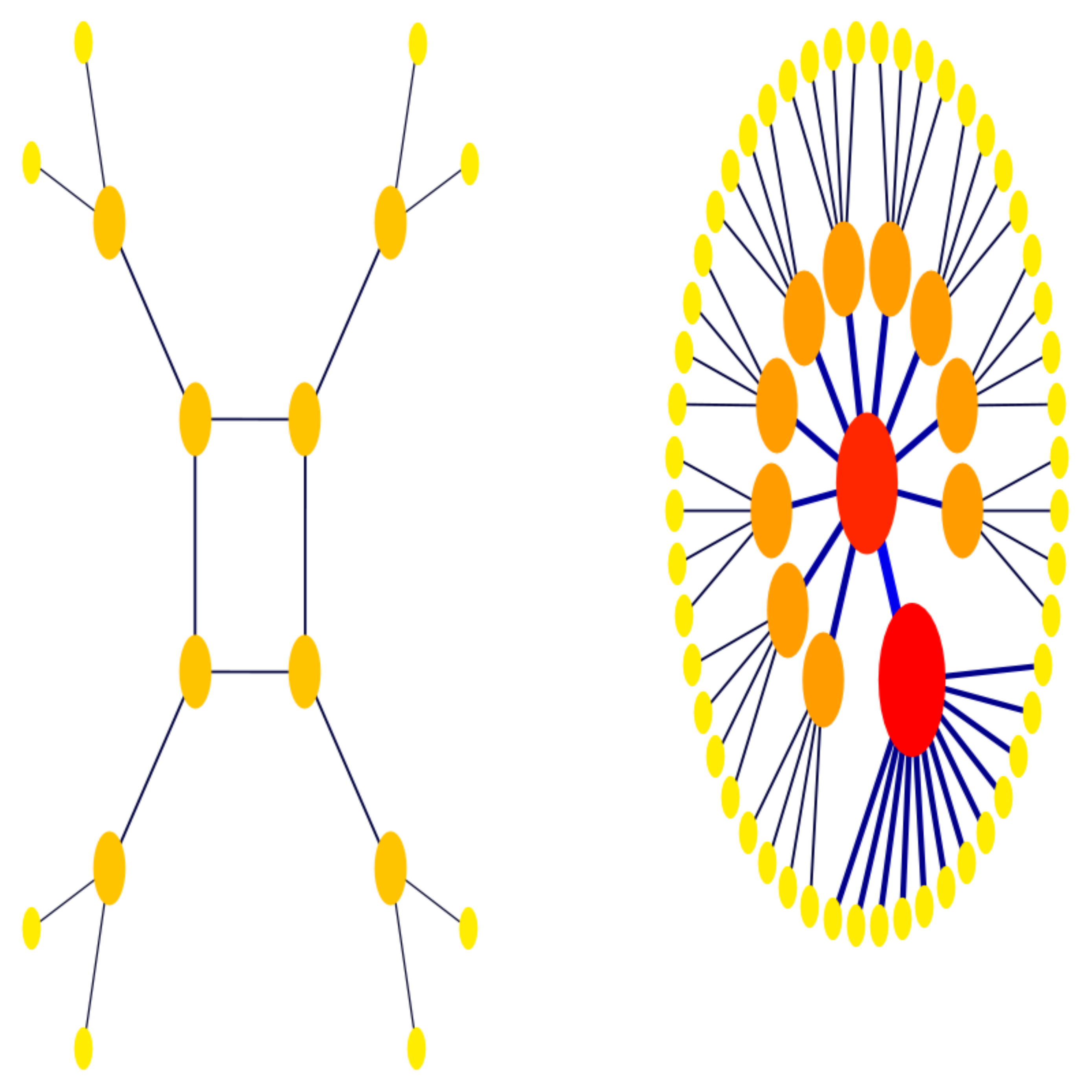}}
\caption{
The graph given by of $T(x) = x^2$ on the ring $R$ of upper triangular $2 \times 2$ matrices
over $Z_5$ has two main components. There are other components associated to matrices with 
zero eigenvalues.
}
\end{figure}

Lets stay with $M(2,Z_n)$ and consider maps $T_i(x) = x^2 +c_i$. 
The networks generated like that look more complex but the statistics is not much different
than in the commutative case. 

\begin{figure}[H]
\scalebox{0.32}{\includegraphics{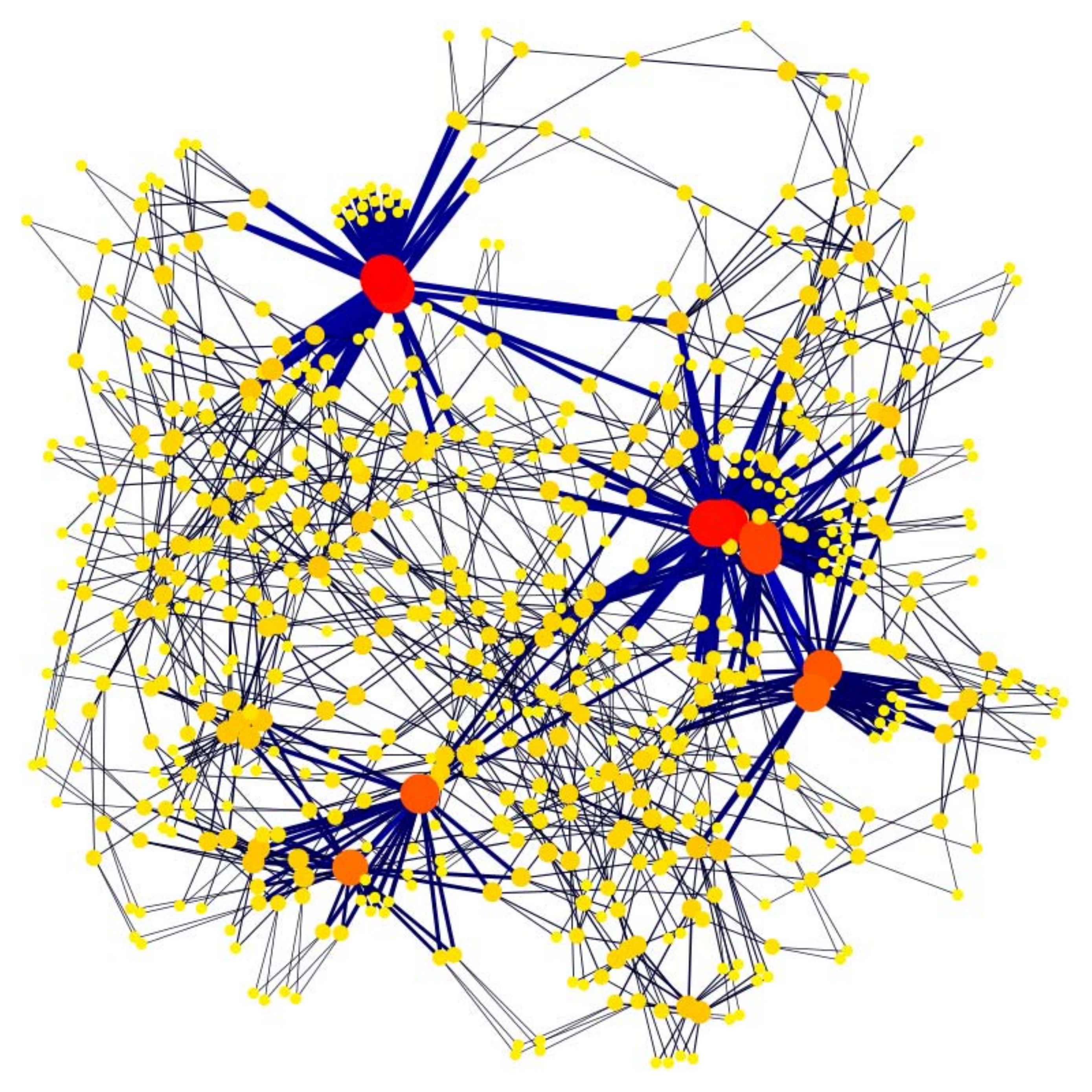}}
\caption{
An example of graph using a noncommutative ring. We take the graph generated by the two
quadratic transformations $x \to x^2 + A$
and $x \to x^2 + B$  
on the matrix ring $M(2,Z_5)$. 
}
\end{figure}

We can also look at polynomial rings $R[x]/(p)$ where $p$ is a polynomial. 

\begin{figure}[H]
\scalebox{0.2}{\includegraphics{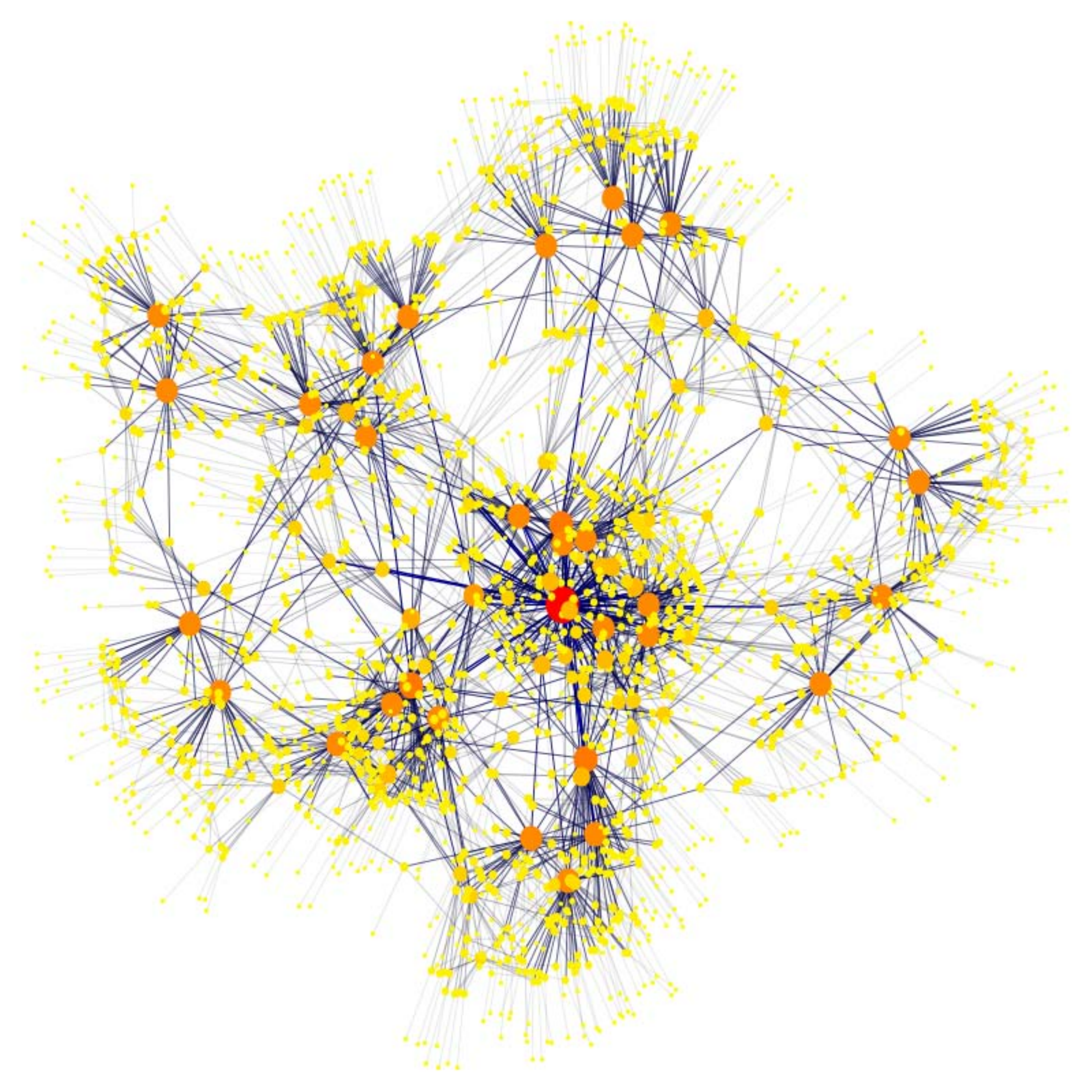}}
\scalebox{0.2}{\includegraphics{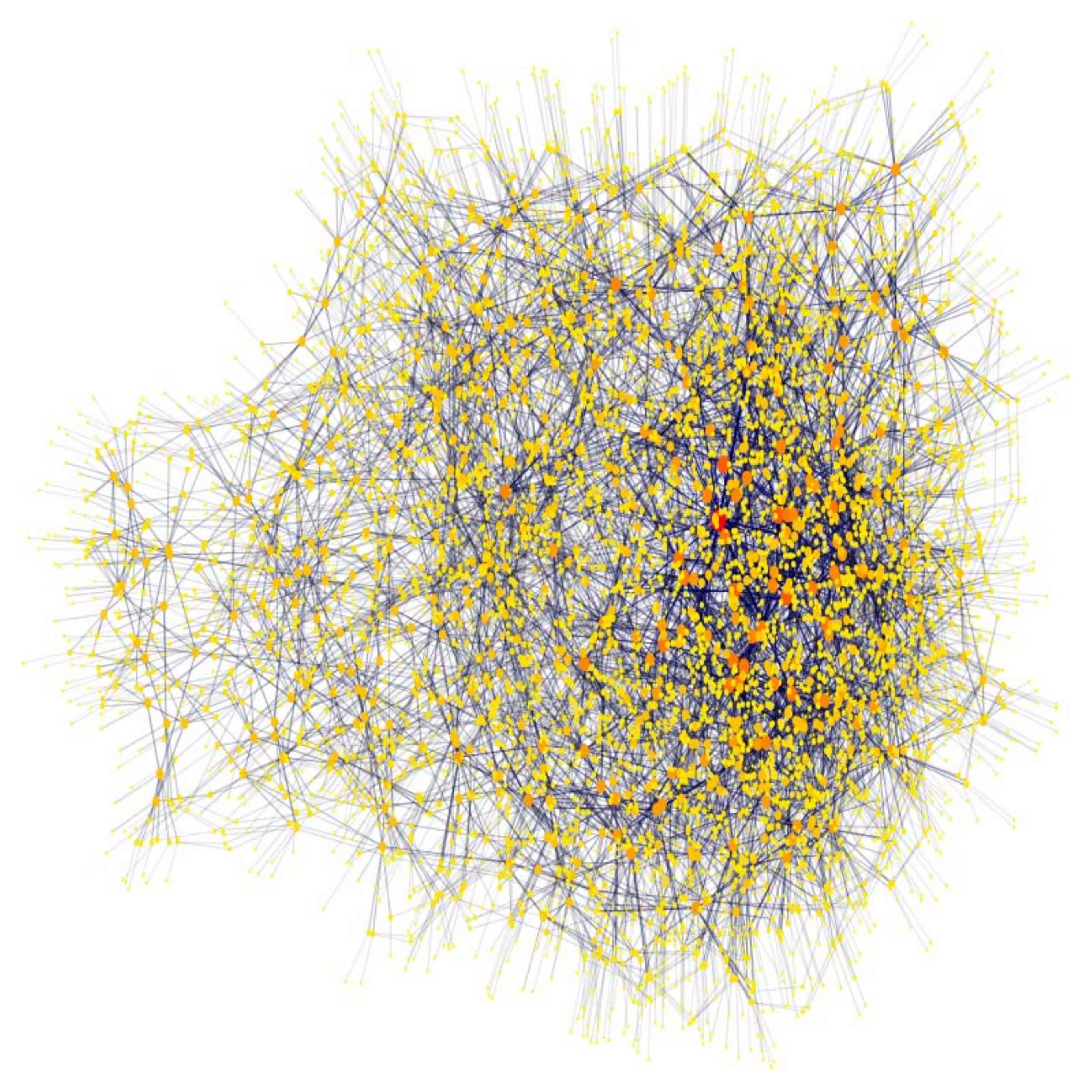}}
\caption{
Graphs on $Z_n[x]/x^6$ (n=4,5) generated by three transformations 
$T_1(f) = f'$, $T_2(f) = f^2$ and $T_3(f) = f+x^4+x^3+x^2+x+1$. 
}
\end{figure}

More generally, one can look at small rings. For example, there are 11
rings of size $4$.  \\

Finally, we can look at transformations of the finite vector space $Z_p^n$.
A natural choice are {\bf elementary cellular automata}. With the Wolfram numbering
for the 256 possible rules, we have for every $n$ a parameter space with $2^{16}$ elements. 
We see experimentally that for some parameters, the connectivity fluctuates with $n$
while for others, the connectivity stays. We could explore this however only for 
$n \leq 25$ because this produces already graphs with number of entries reaching the 
human population on earth. 

\section{Five Mandelbrot Problems}

Here are five connectivity problems which are unsettled at the moment but 
look accessible.  \\

\begin{center} \fbox{\parbox{10cm}{
{\bf 1)} Find necessary and sufficient conditions which assure that the graph 
generated by $T(x)=ax+b$ on $Z_n$ is connected.  \\
}} \end{center}

We have seen that $n$ is a (double) $P$ smooth number where $P$ is a subset of the 
union of primes of $a$ and $a-1$. \\

\begin{center} \fbox{\parbox{10cm}{
{\bf 2)} Are all graphs on $Z_n$ generated by $T(x)=3x+1, S(x)=2x$ connected?  \\
}} \end{center}

We have checked this at the moment only for all graphs up to $n=200'000$ nodes. \\

\begin{center} \fbox{\parbox{10cm}{
{\bf 3)} Find necessary and sufficient conditions which assure that the graph
generated by $T(x)=x^a$ and $S(x)=x^b$ on $Z_n^*$ is connected.  \\
}} \end{center}

We have seen that if $a=2$ and $n$ is a Fermat prime, then the graph is 
connected because it is already connected for one transformation. 
An example is $n=257, a=5, b=56$.  \\

\begin{center} \fbox{\parbox{10cm}{
{\bf 4)} Which $2 \times 2$ matrices $A$ in $R=M(2,Z_n)$ have the property that
$T(x)=x^2+A$ produces a connected graph?   \\
}} \end{center}

An example of a connected graph is obtained for $n=5$ with 
$A = \left[ \begin{array}{cc} 1 & 2 \\ 2 &  4 \\ \end{array} \right]$. 

\begin{center} \fbox{\parbox{10cm}{
{\bf 5)} For which $n,a,b$ do the elementary cellular automata $T_a,T_b$ acting on the field $Z_2^n$
produce connected graphs? 
}} \end{center}

\begin{figure}
\scalebox{0.31}{\includegraphics{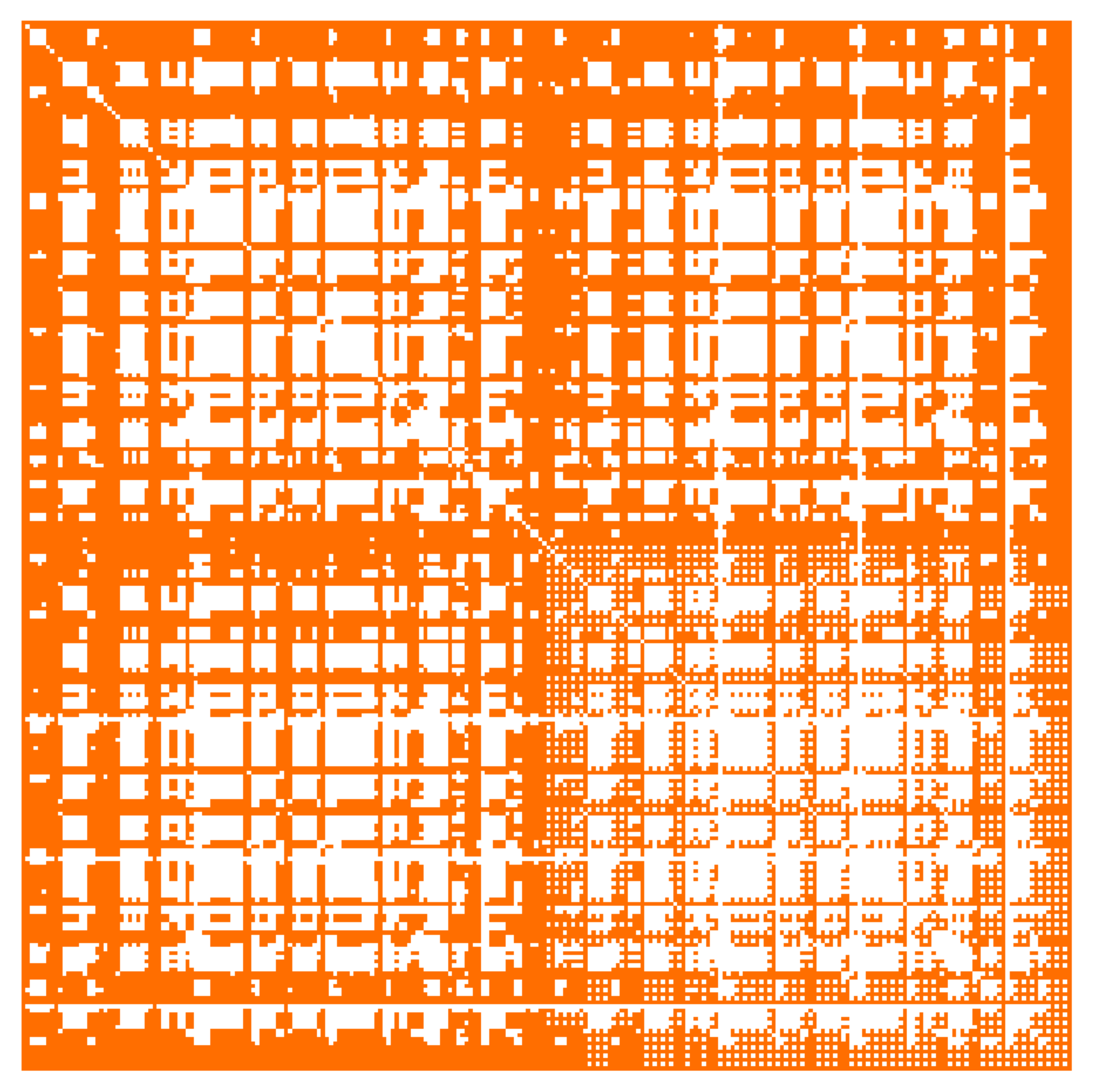}}
\caption{
The CA Mandelbrot set $M$ for pairs of elementary CA on $Z_2^9$. 
Colored are the parameters $(a,b)$ with 
$0 \leq a,b \leq 255$ for which one has a connected graph. 
}
\end{figure}

For every $n$ there are $2^{16}$ possible graphs. One can ask whether we have for fixed $a,b$
convergence of the number of connectivity components when $n \to \infty$. This seems to be the case for $a=101,b=110$. 
For other pairs like $a=3,b=4$ we see fluctuations in $n$ which seem to depend on the prime
factorization of $n$  at least for the $n$ in which we can construct the graphs like $n \leq 25$. 

\section{Pictures}

The following pictures have been produced with code we will submit to
a demonstration project allowing readers to draw the graphs on their own. 
In the mean time the code is also available on the project website. 

\scalebox{0.23}{\includegraphics{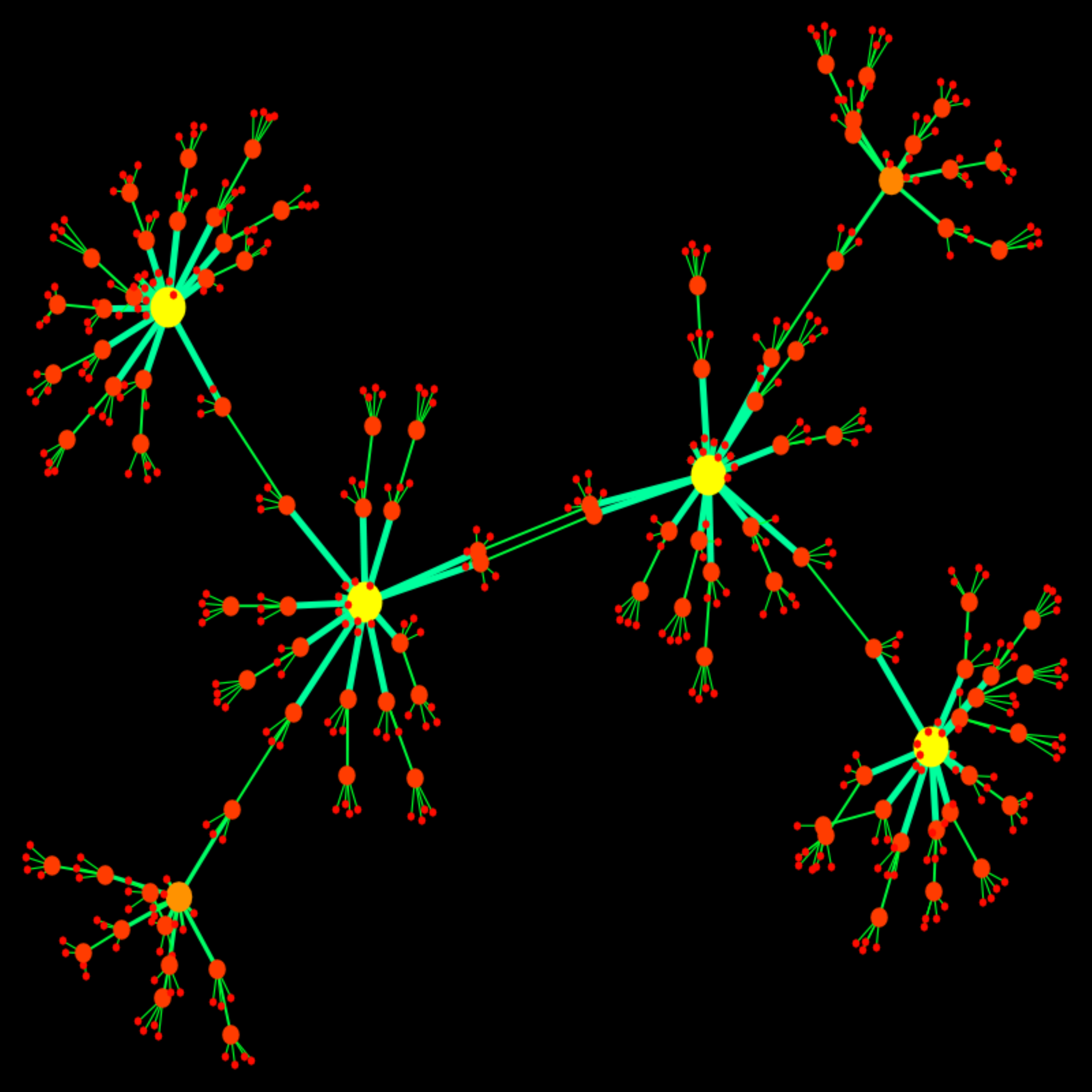}}
\scalebox{0.23}{\includegraphics{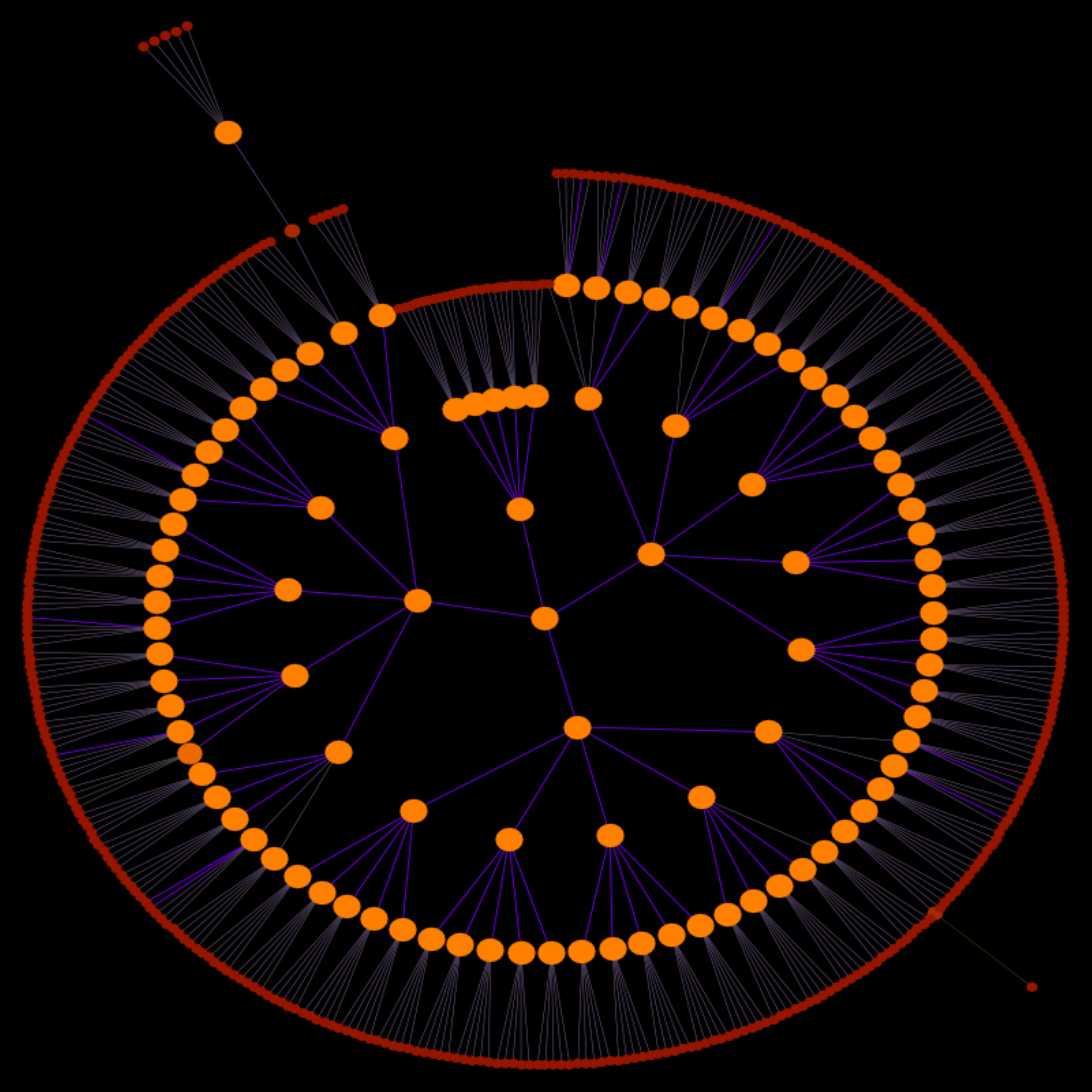}}
\scalebox{0.23}{\includegraphics{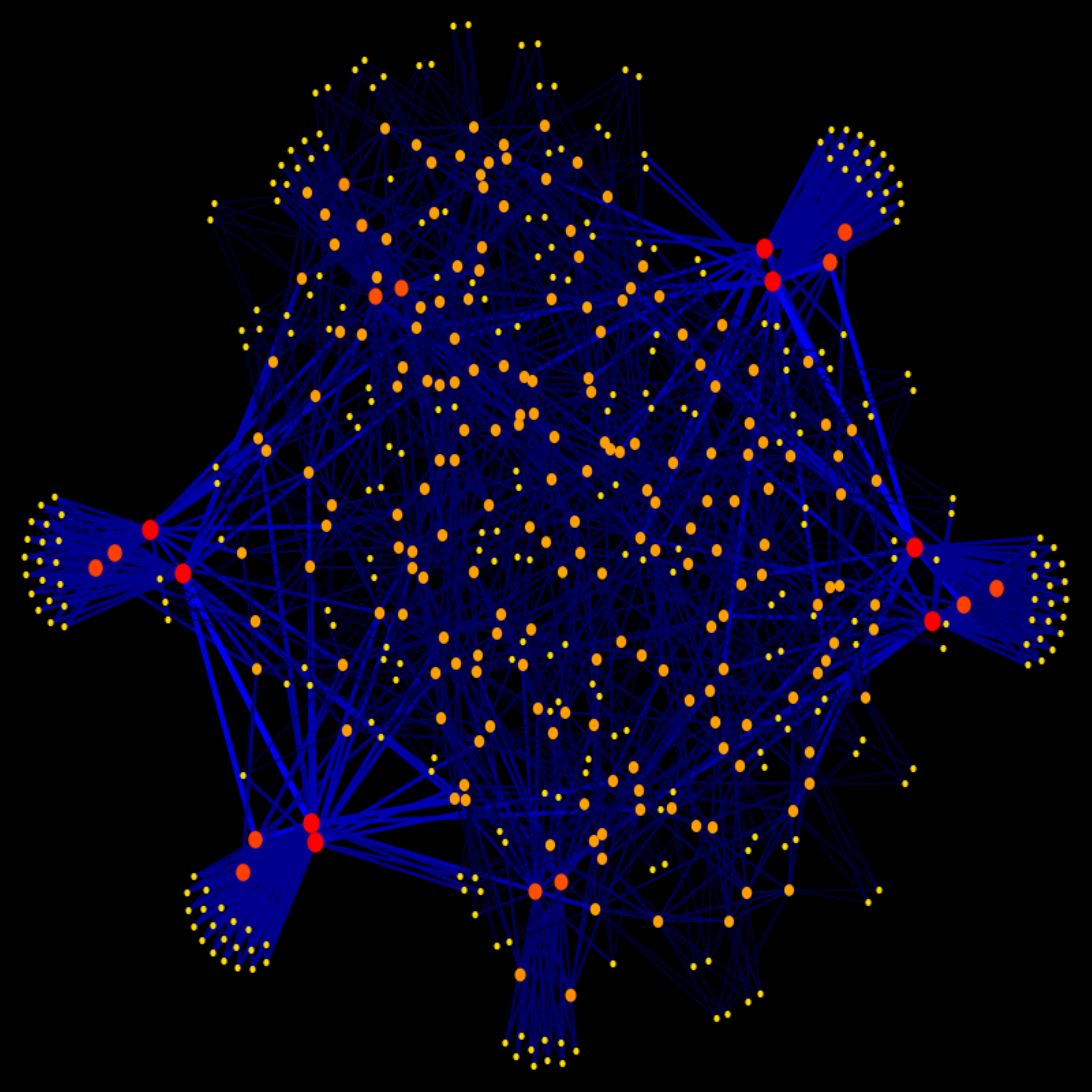}}
\scalebox{0.23}{\includegraphics{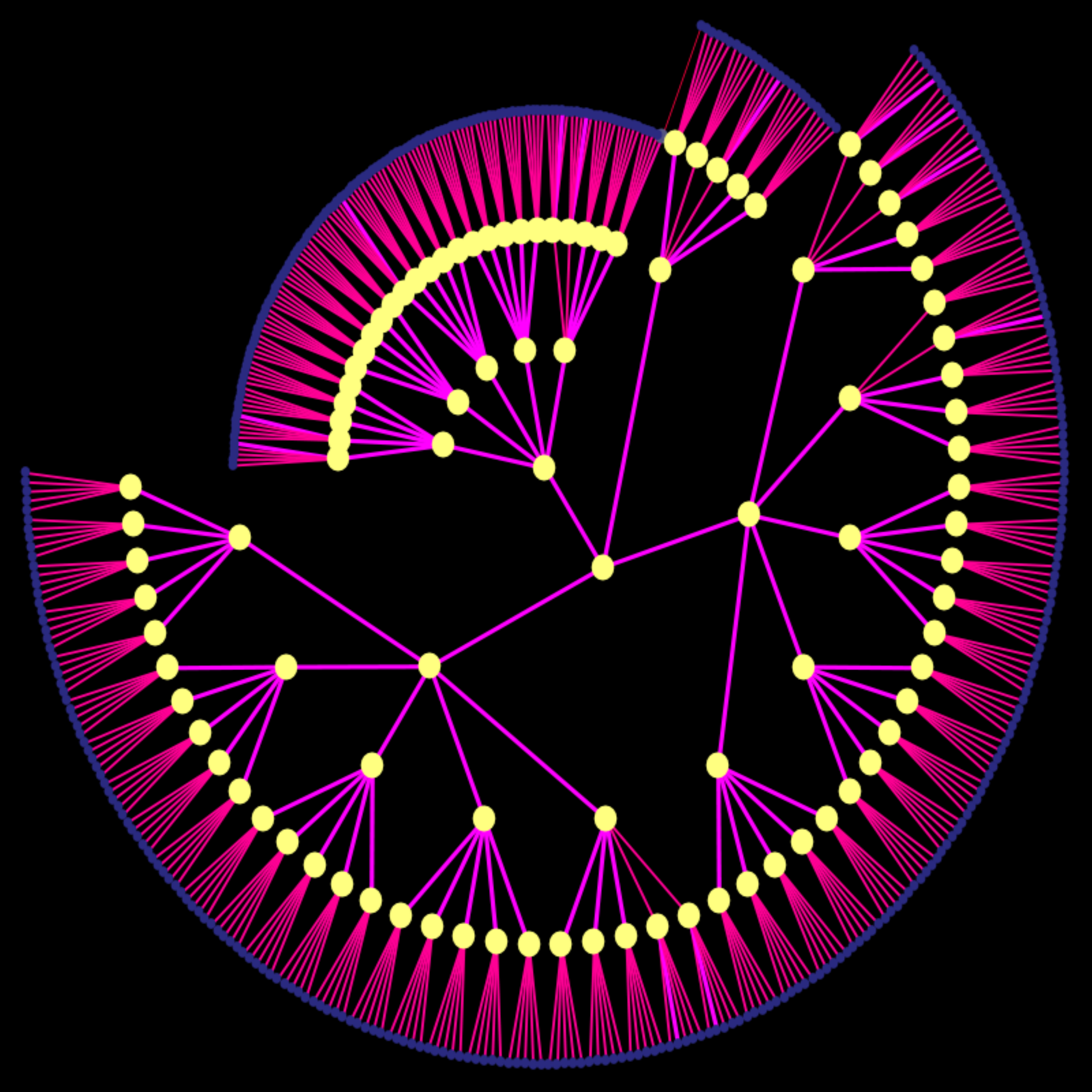}}
\scalebox{0.23}{\includegraphics{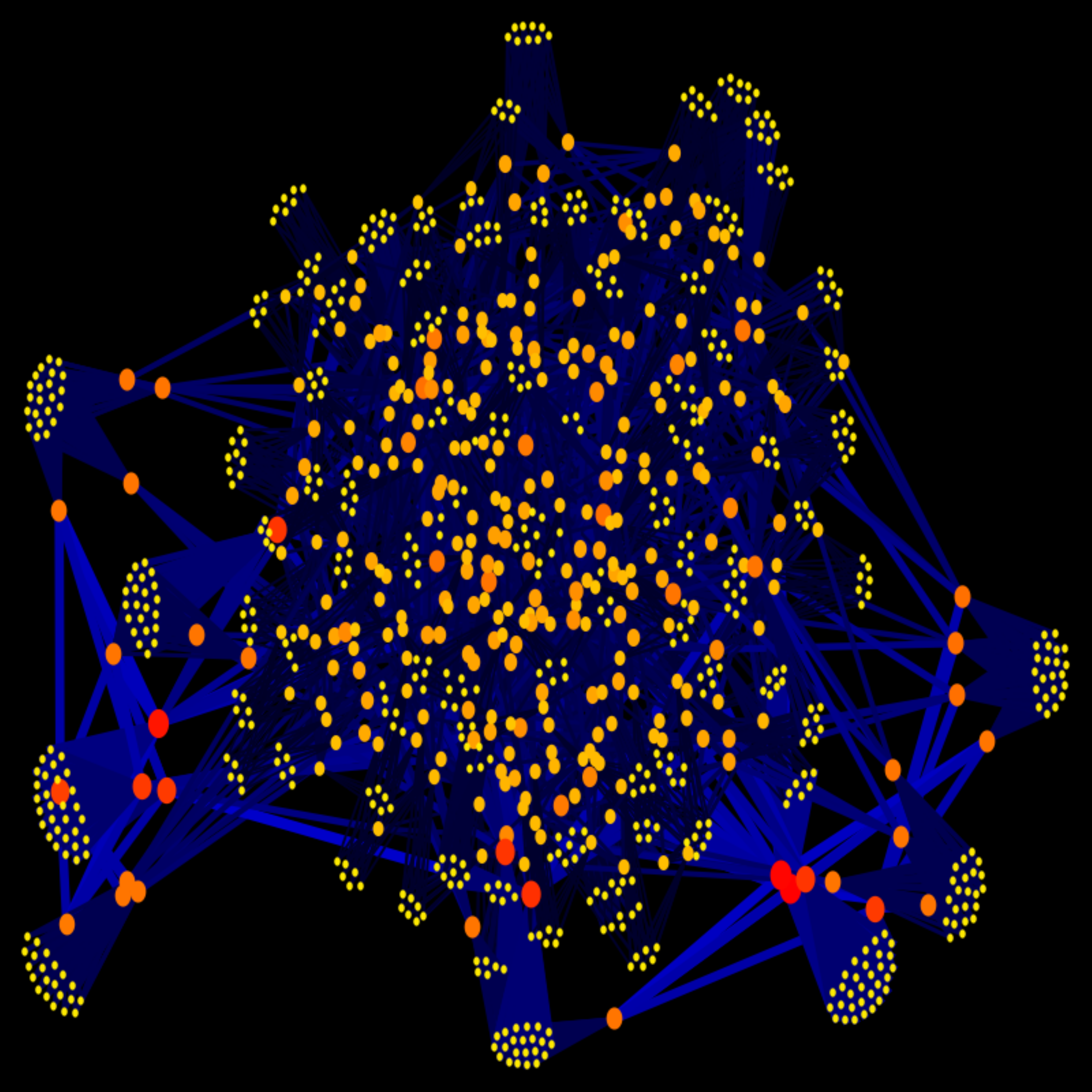}}
\scalebox{0.23}{\includegraphics{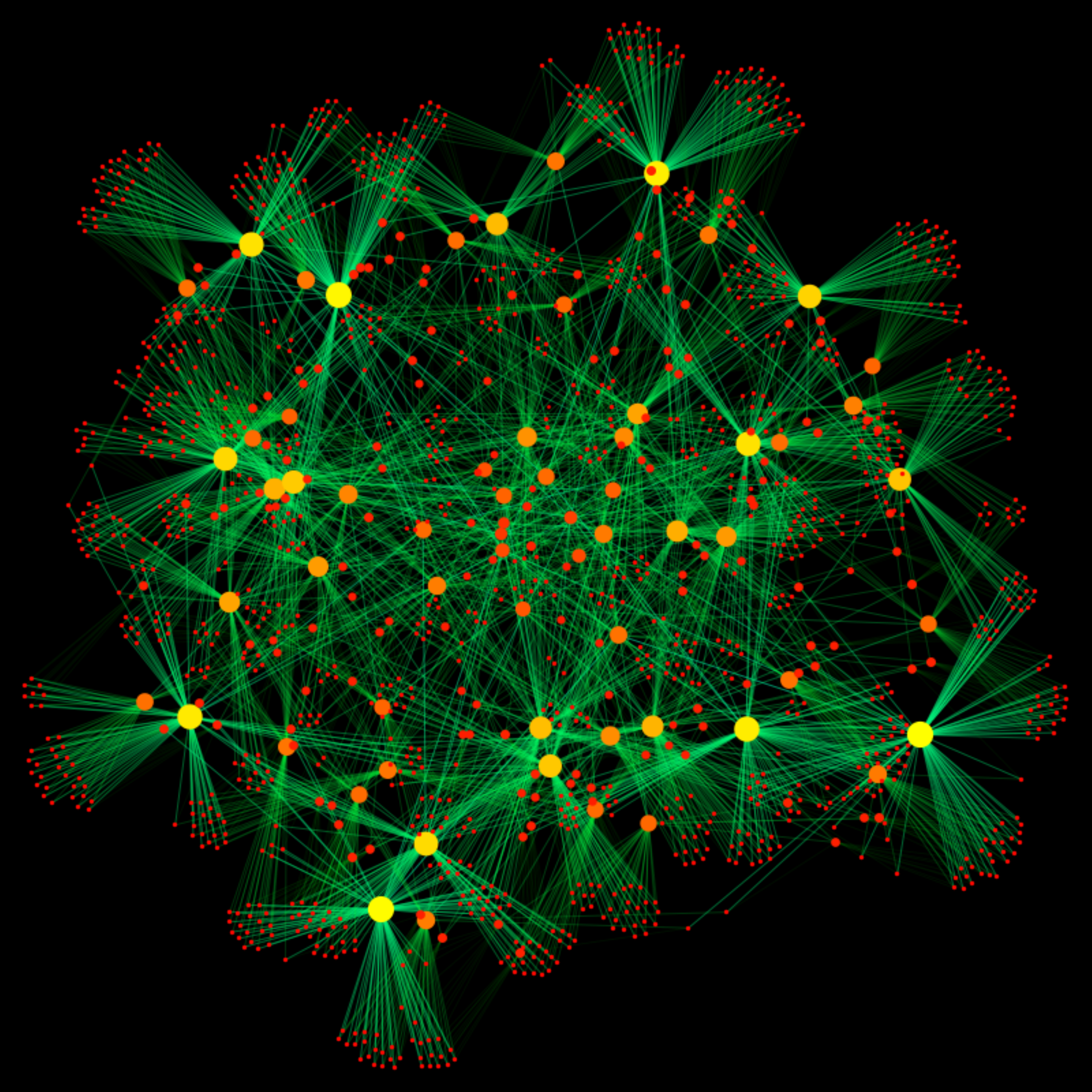}}
\scalebox{0.23}{\includegraphics{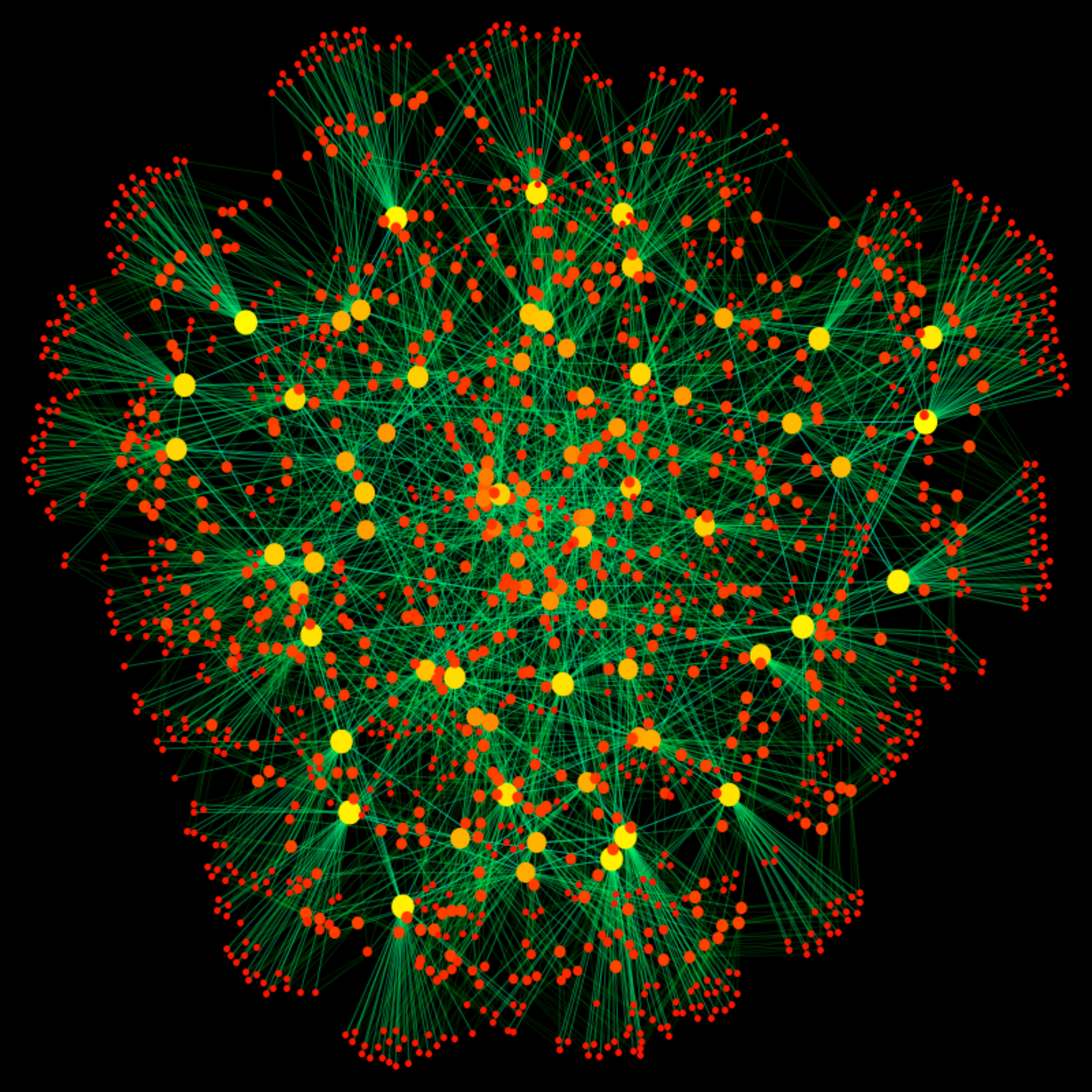}}
\scalebox{0.23}{\includegraphics{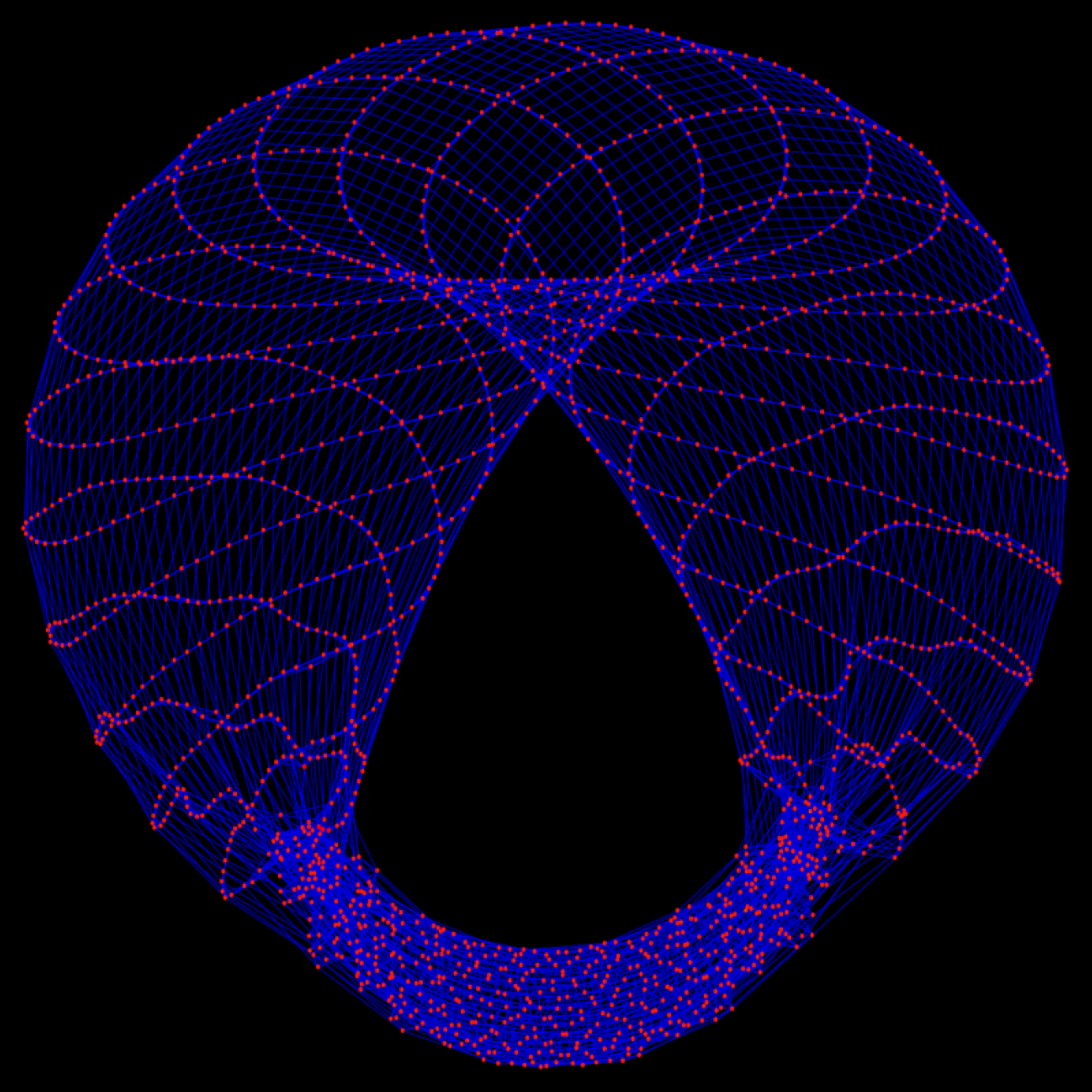}}
\scalebox{0.23}{\includegraphics{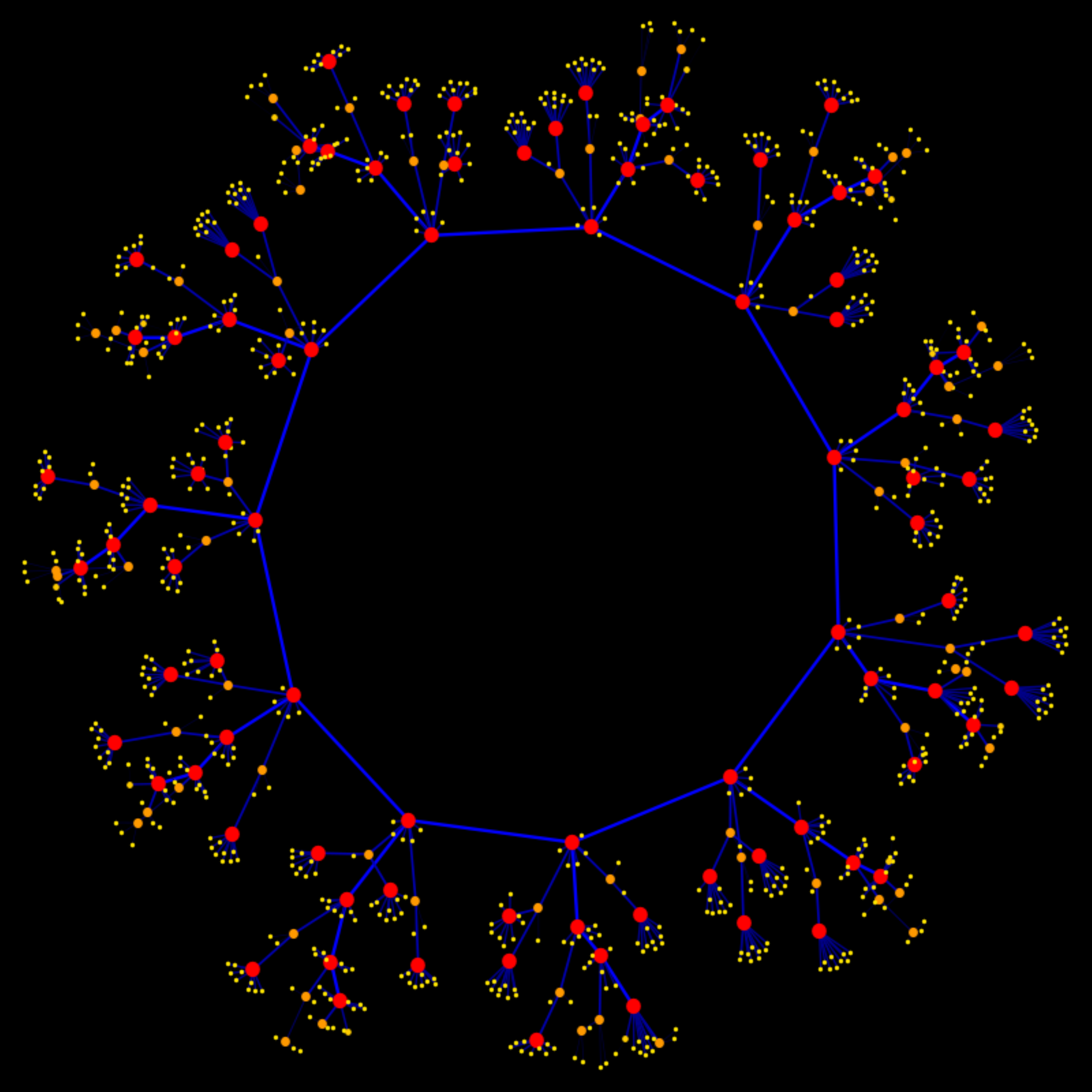}}
\scalebox{0.23}{\includegraphics{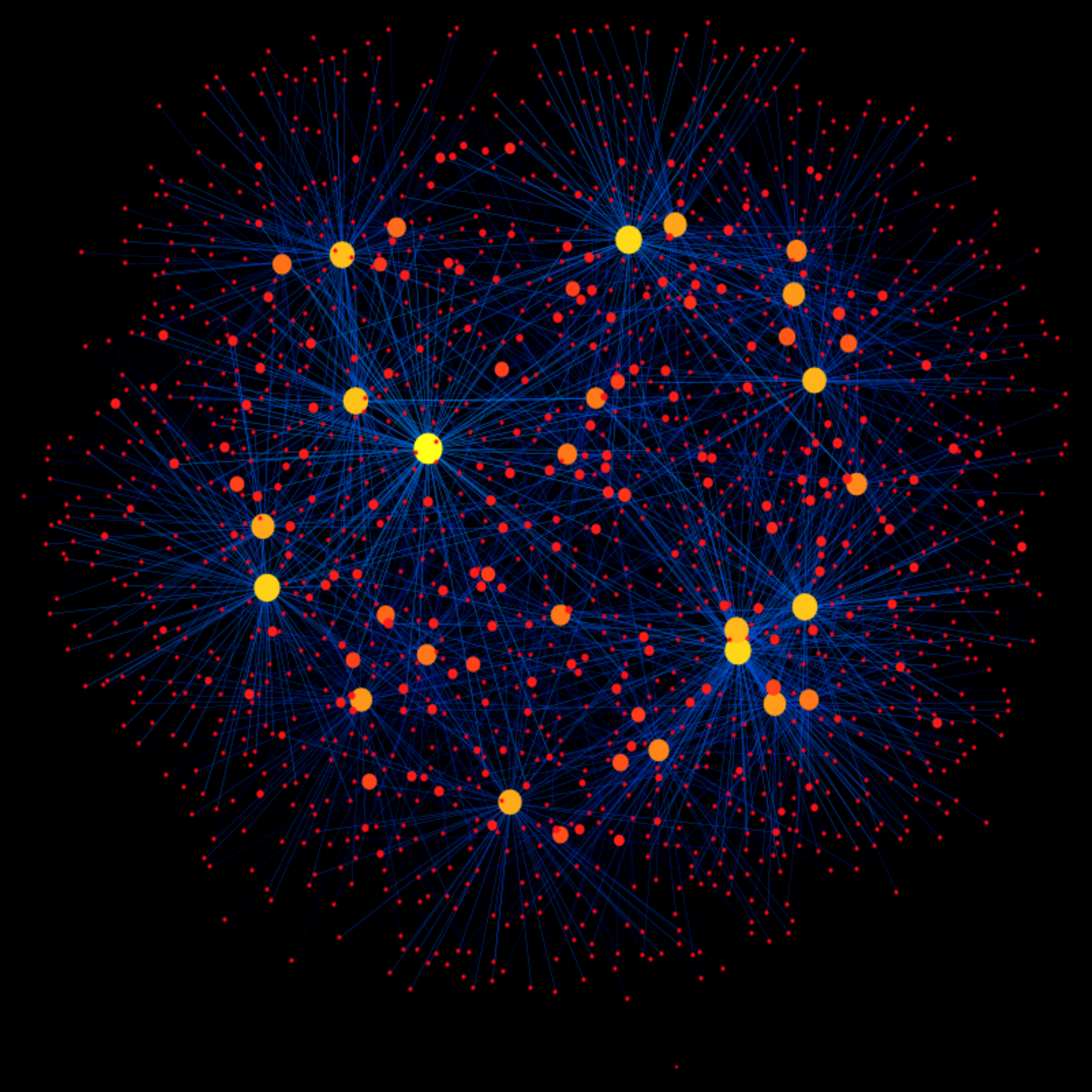}}
\scalebox{0.23}{\includegraphics{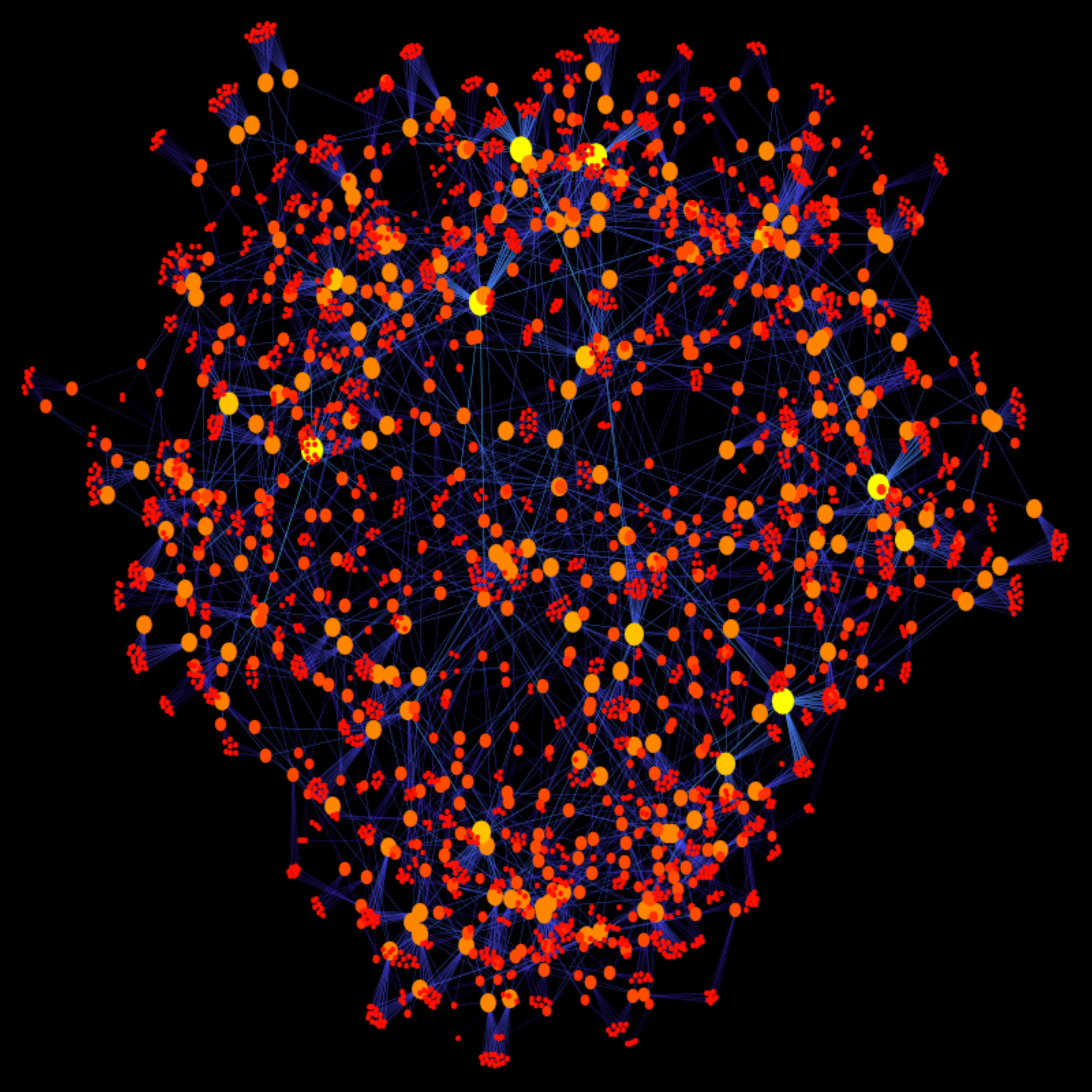}}
\scalebox{0.23}{\includegraphics{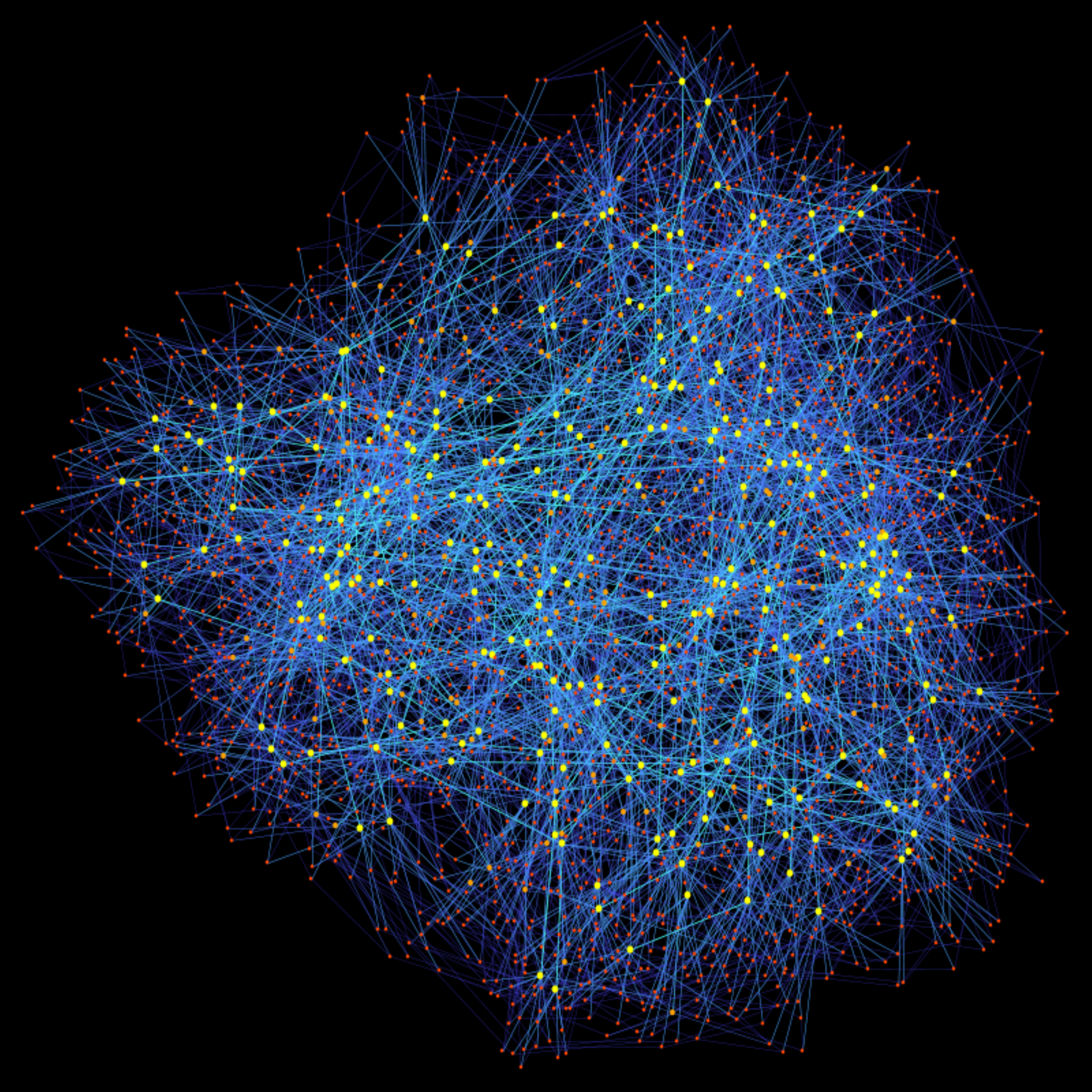}}
\scalebox{0.23}{\includegraphics{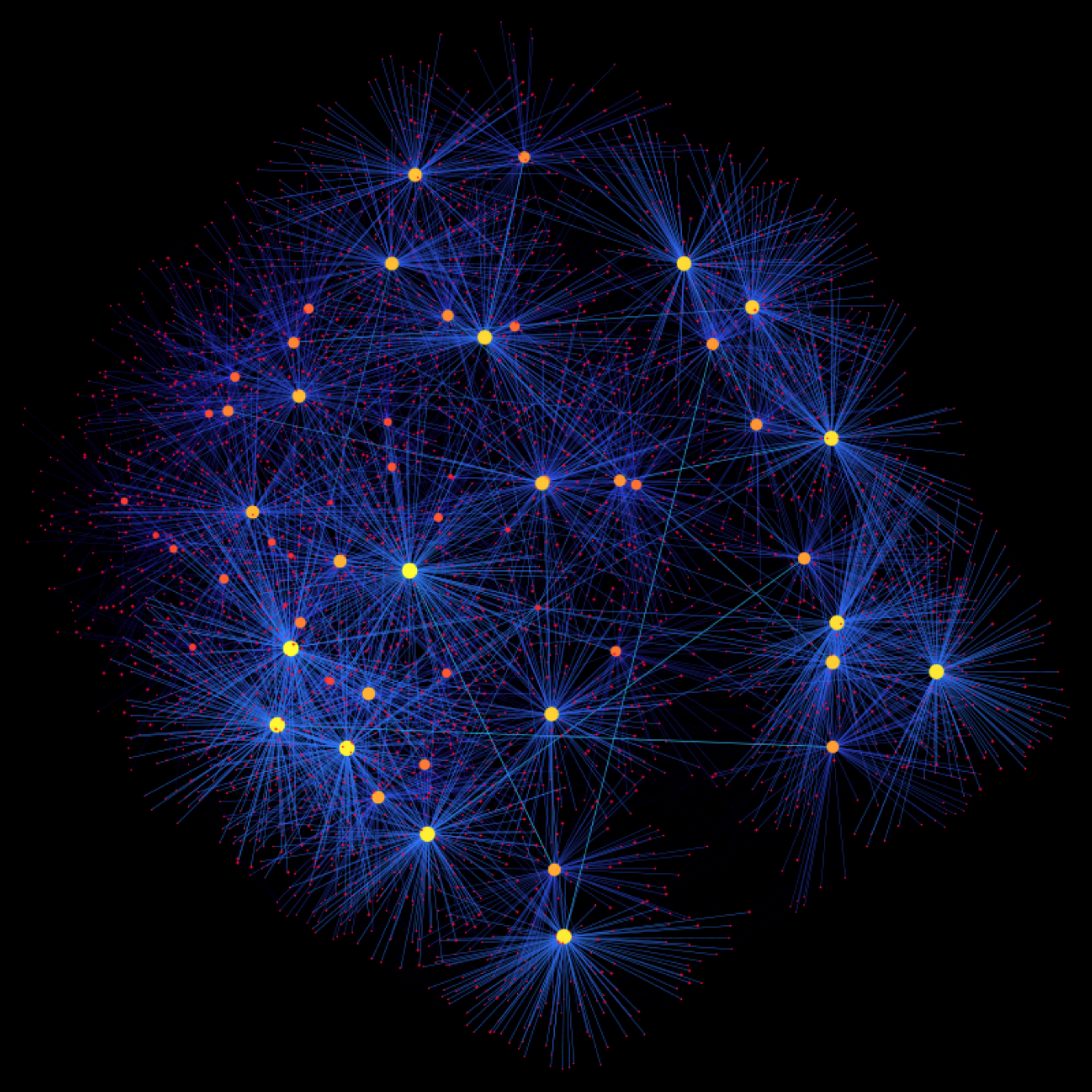}}
\scalebox{0.23}{\includegraphics{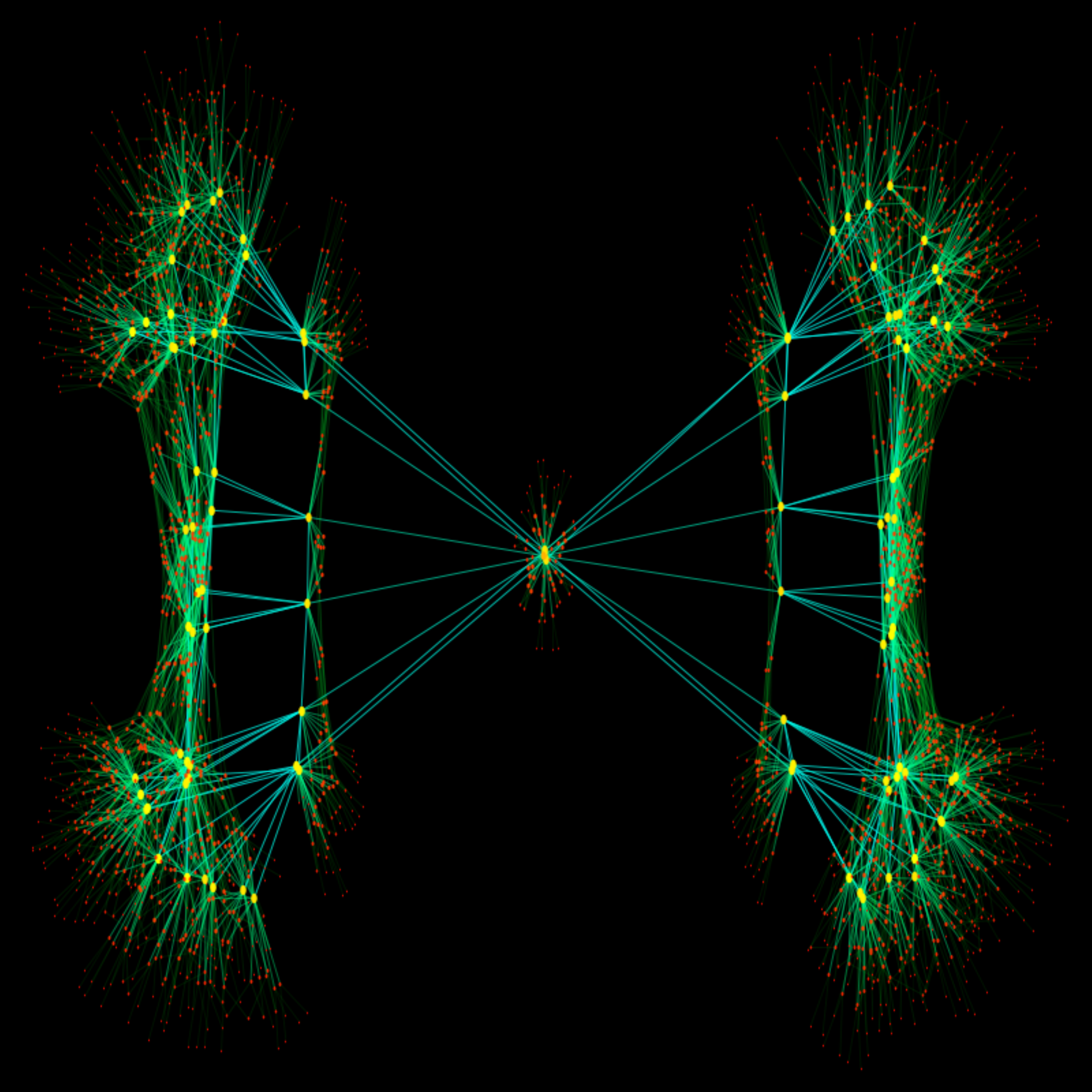}}
\scalebox{0.23}{\includegraphics{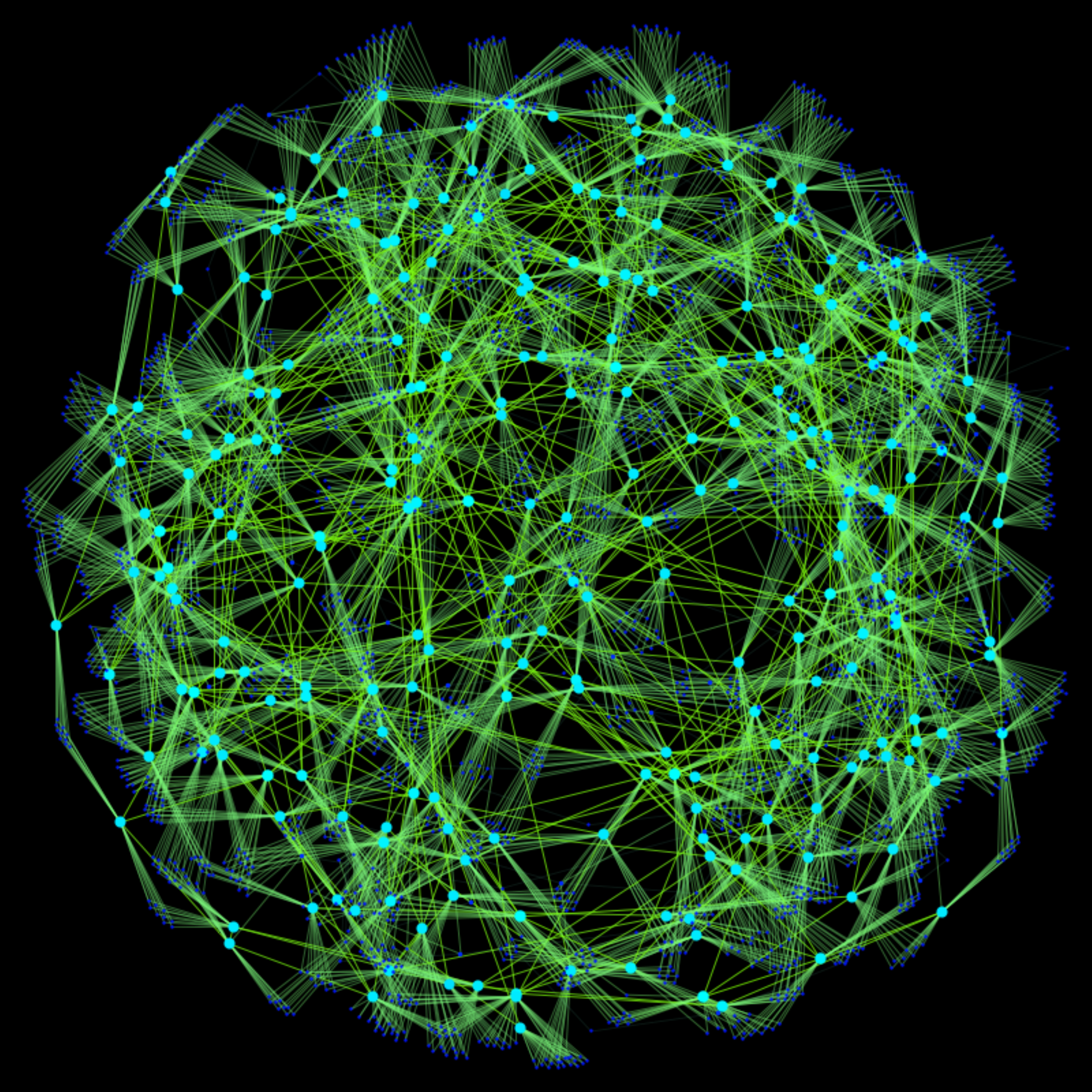}}
\scalebox{0.23}{\includegraphics{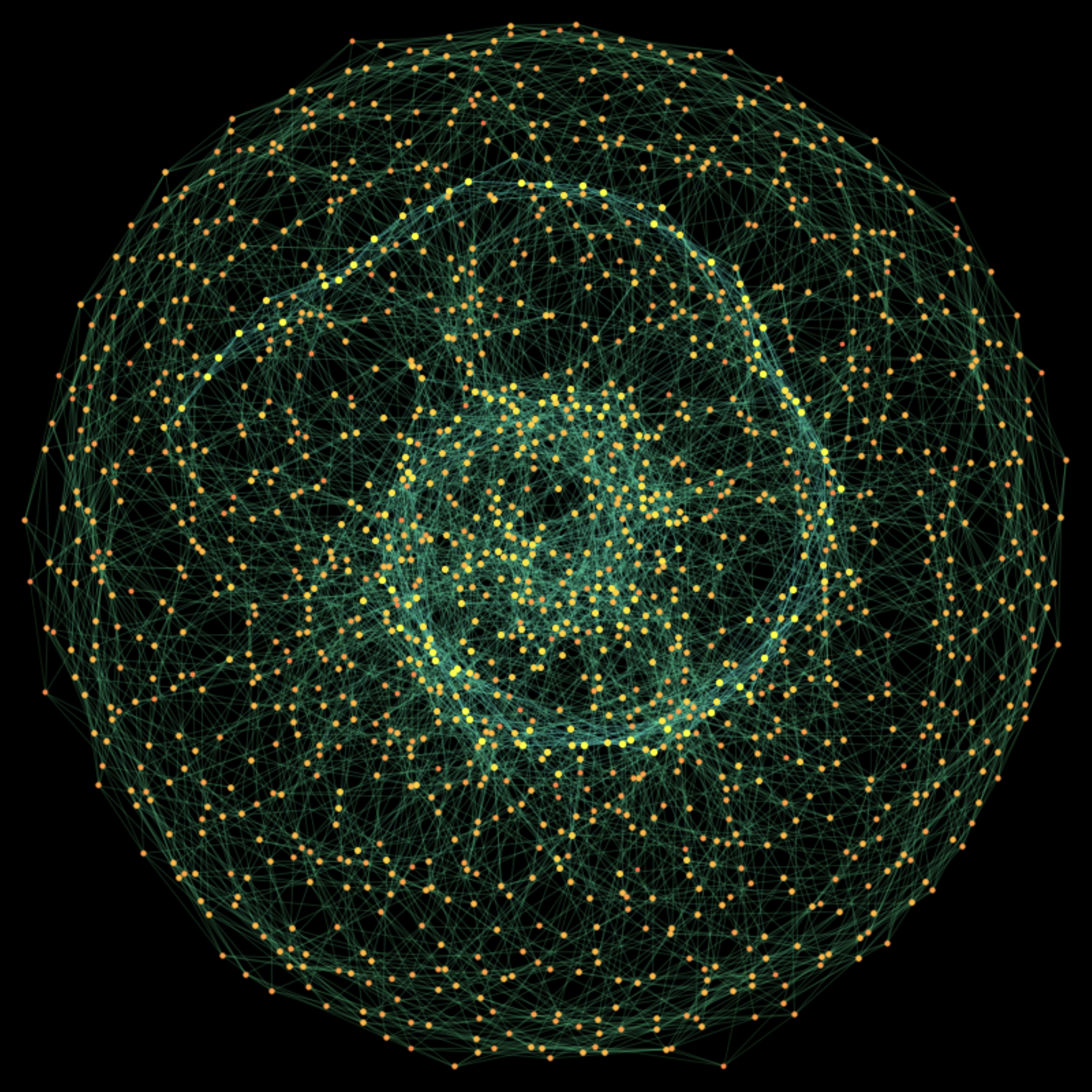}}
\scalebox{0.23}{\includegraphics{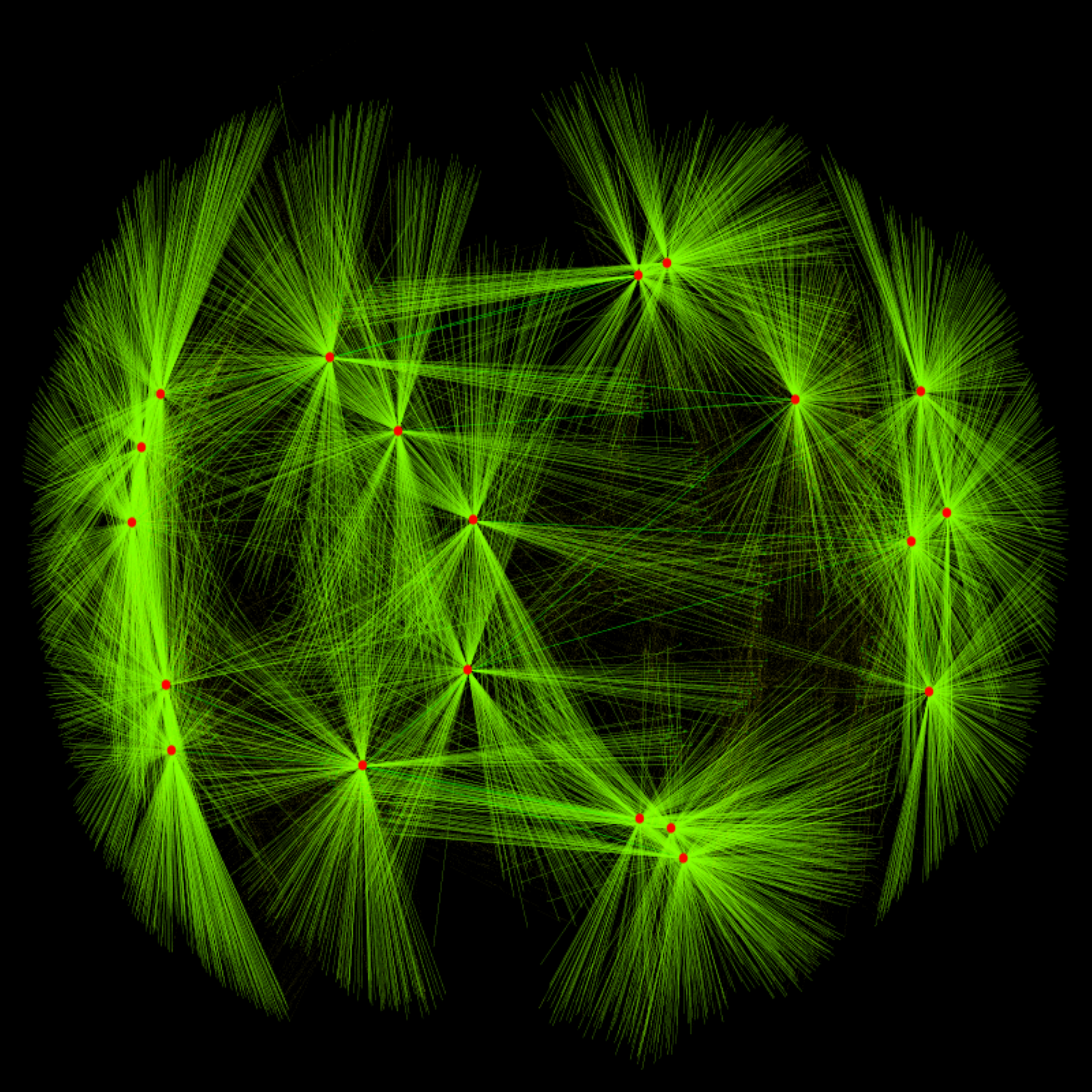}}
\scalebox{0.23}{\includegraphics{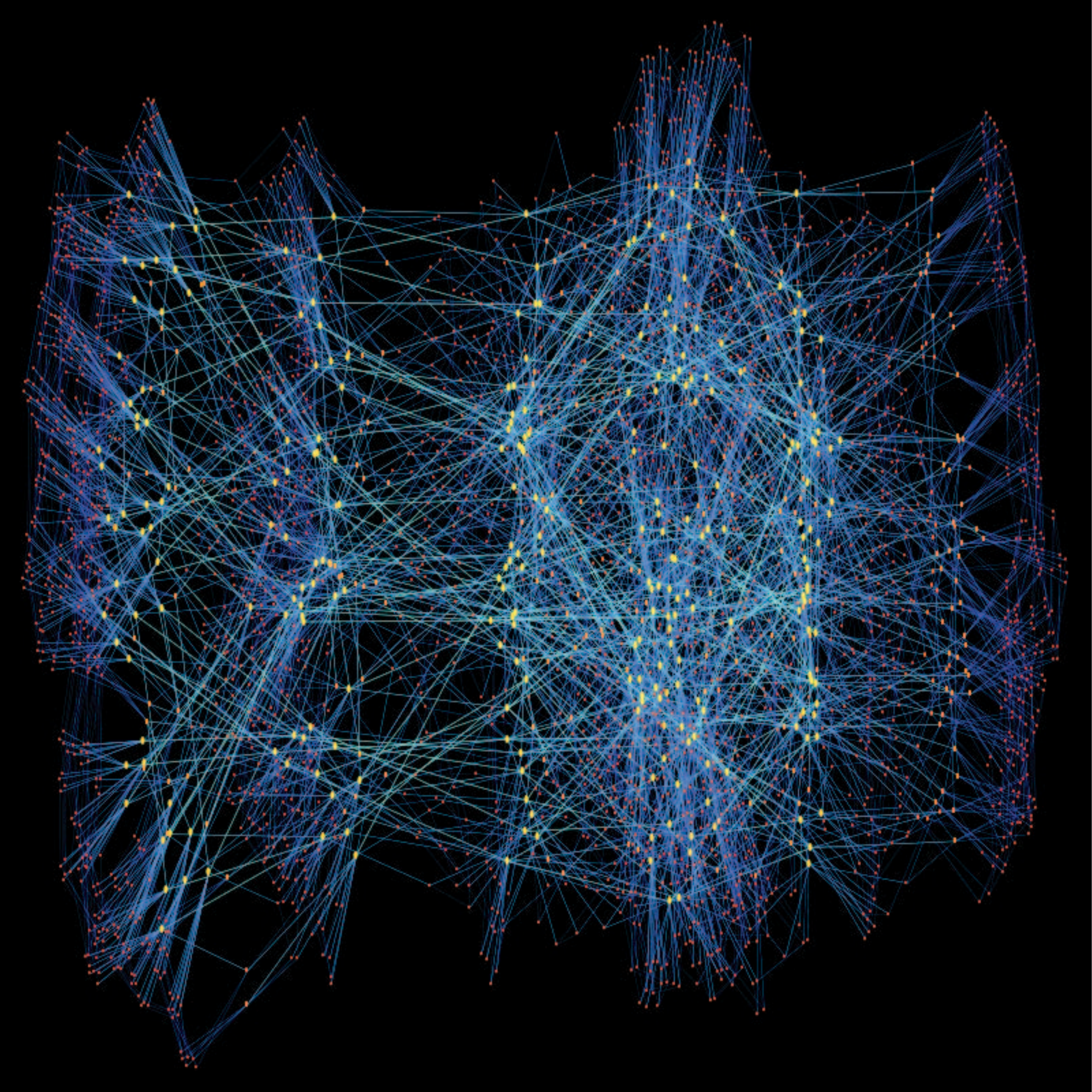}}
\scalebox{0.23}{\includegraphics{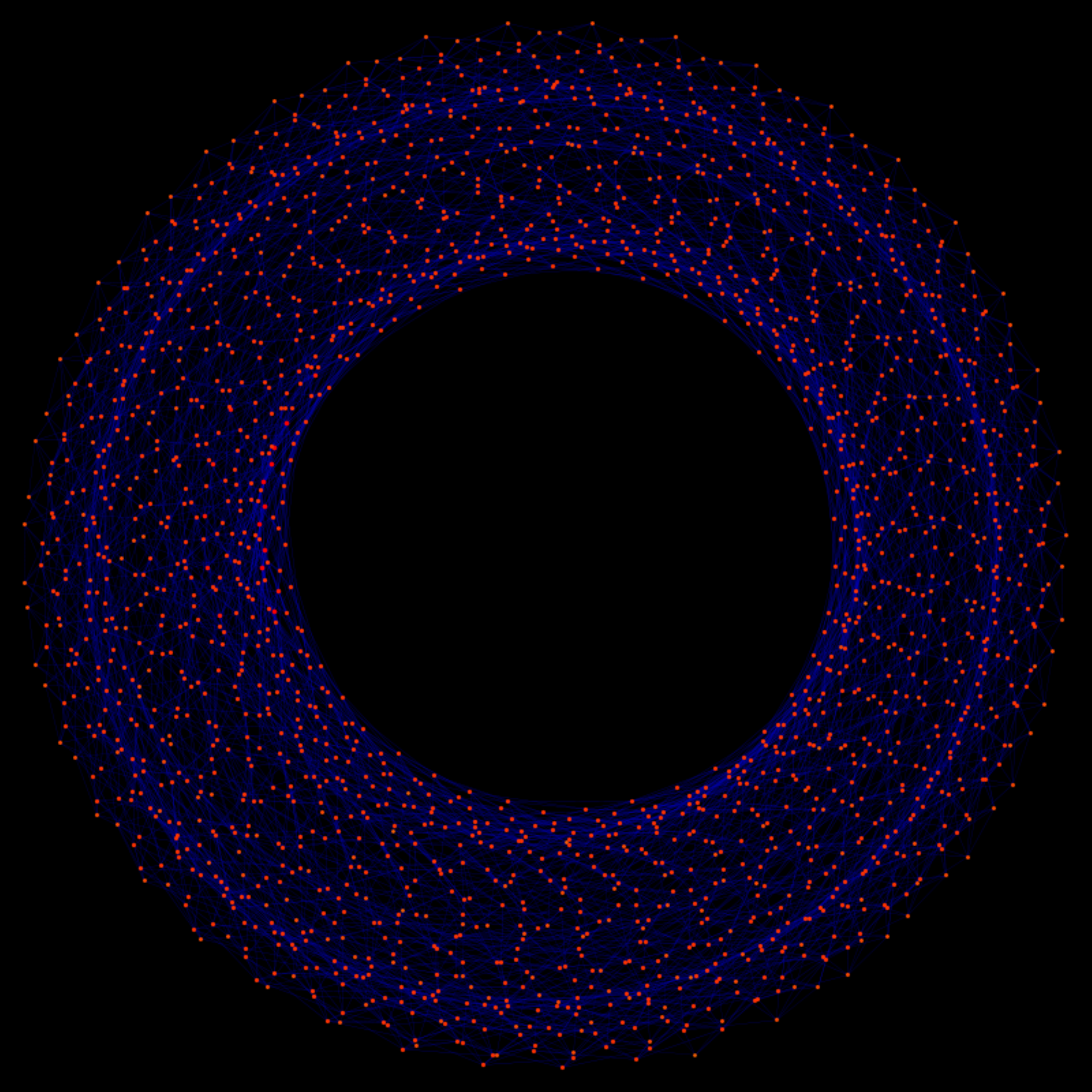}}
\scalebox{0.23}{\includegraphics{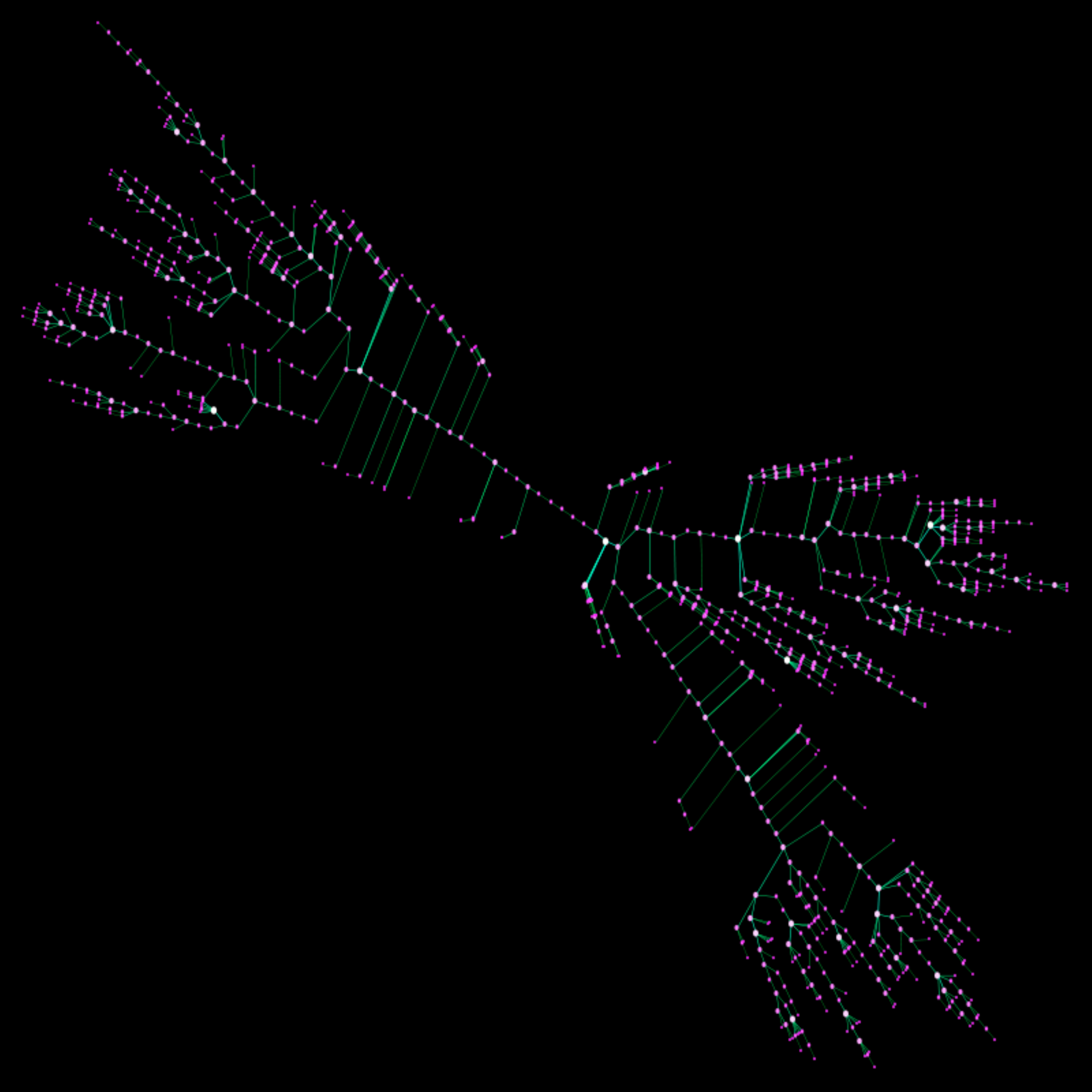}}
\scalebox{0.23}{\includegraphics{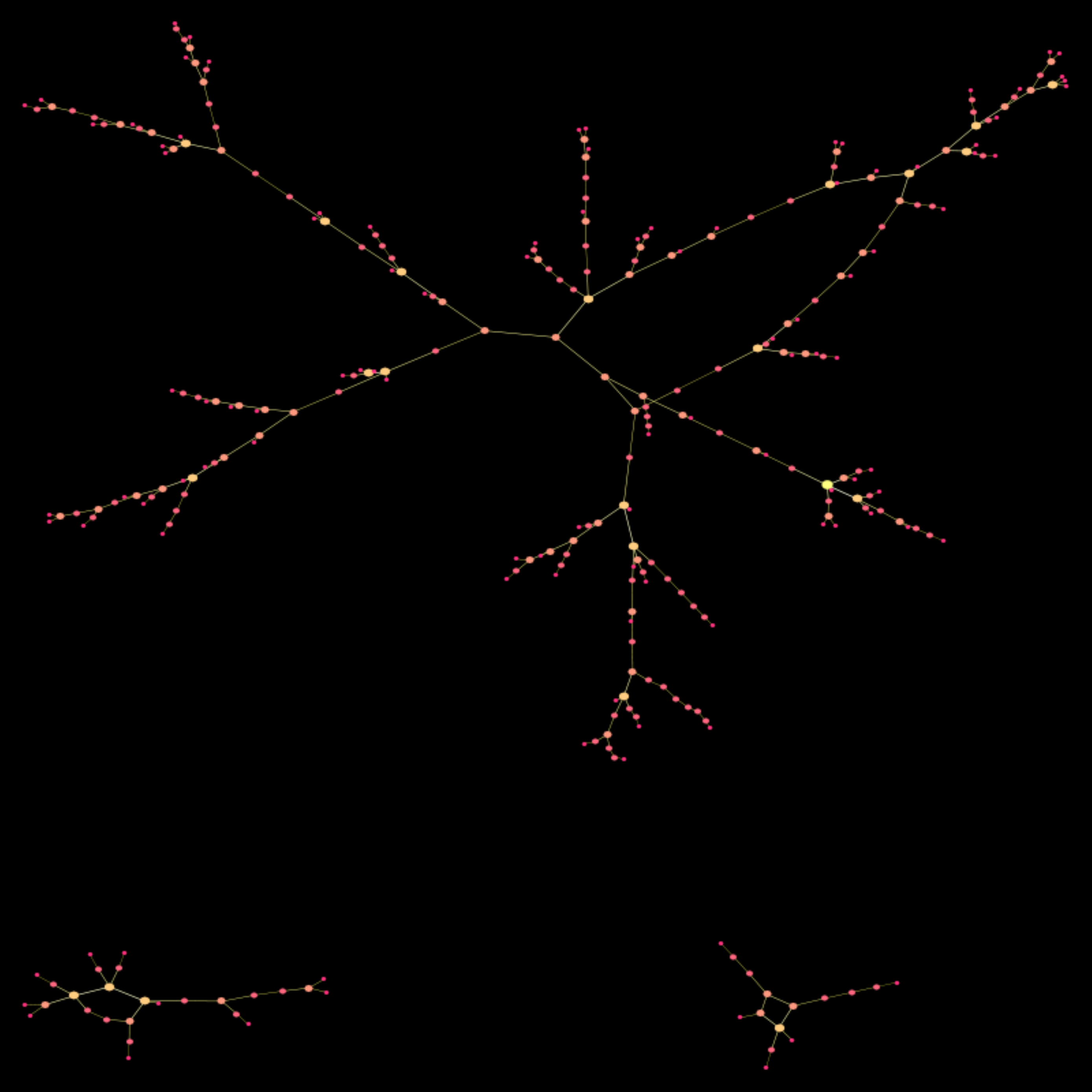}}
\scalebox{0.23}{\includegraphics{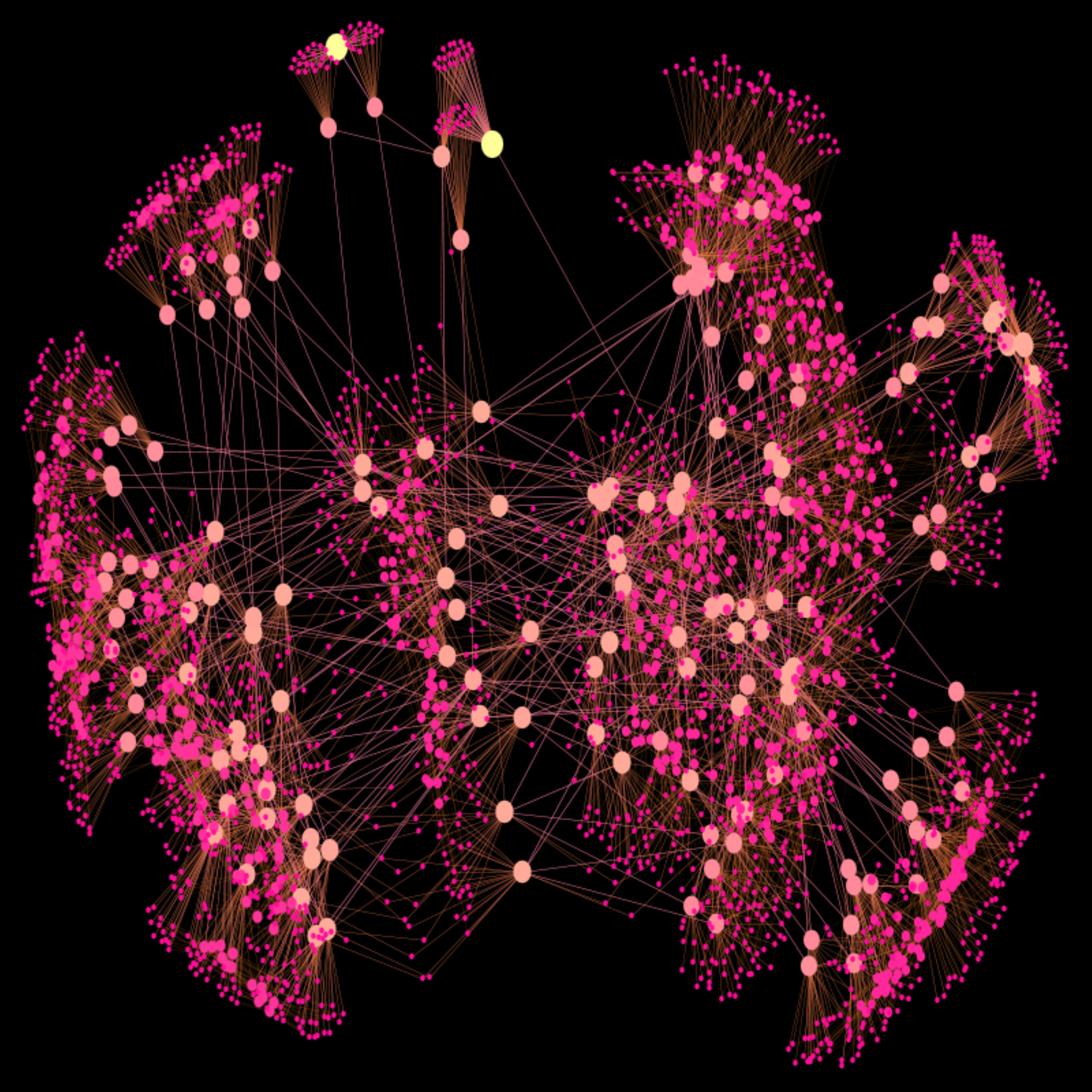}}
\scalebox{0.23}{\includegraphics{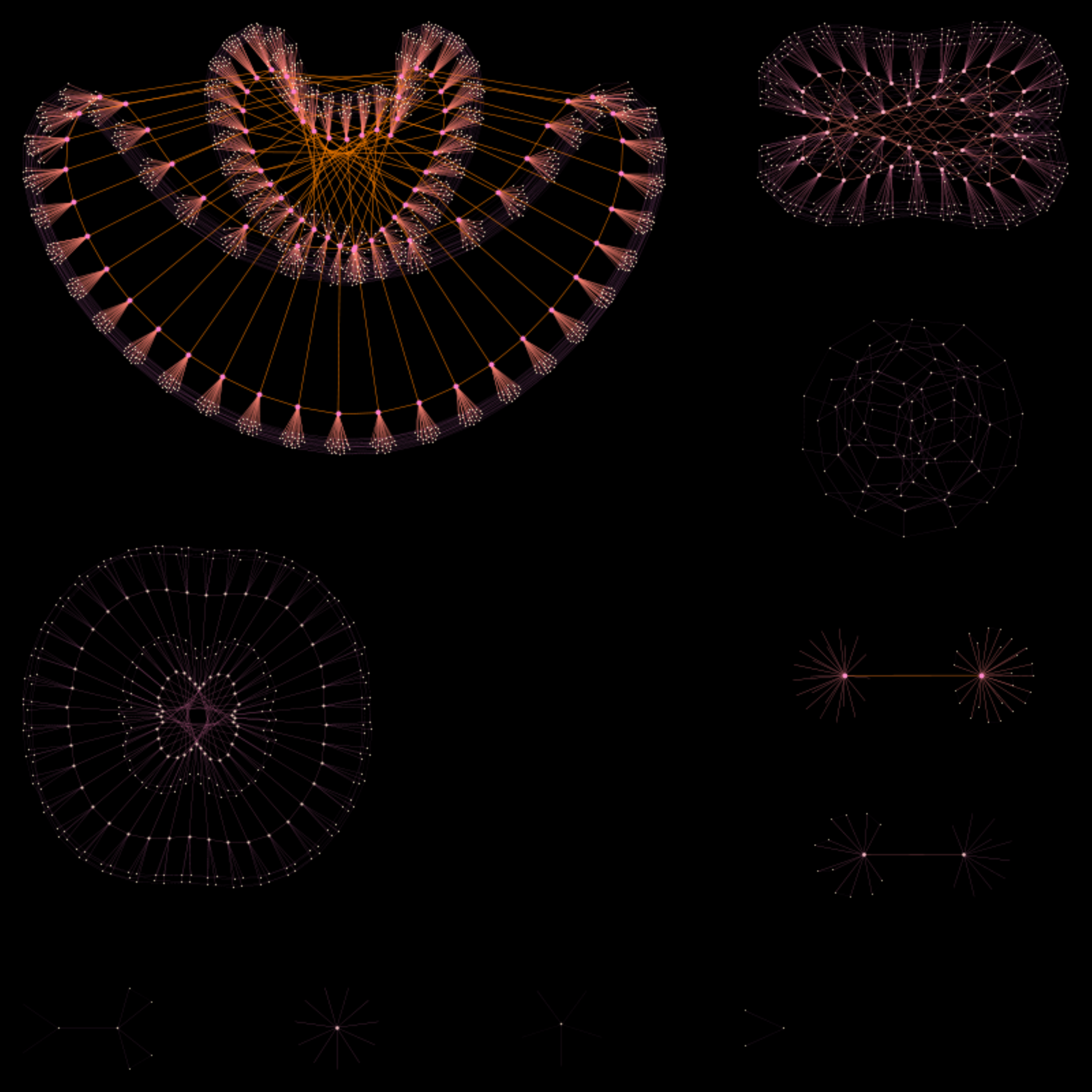}}
\scalebox{0.23}{\includegraphics{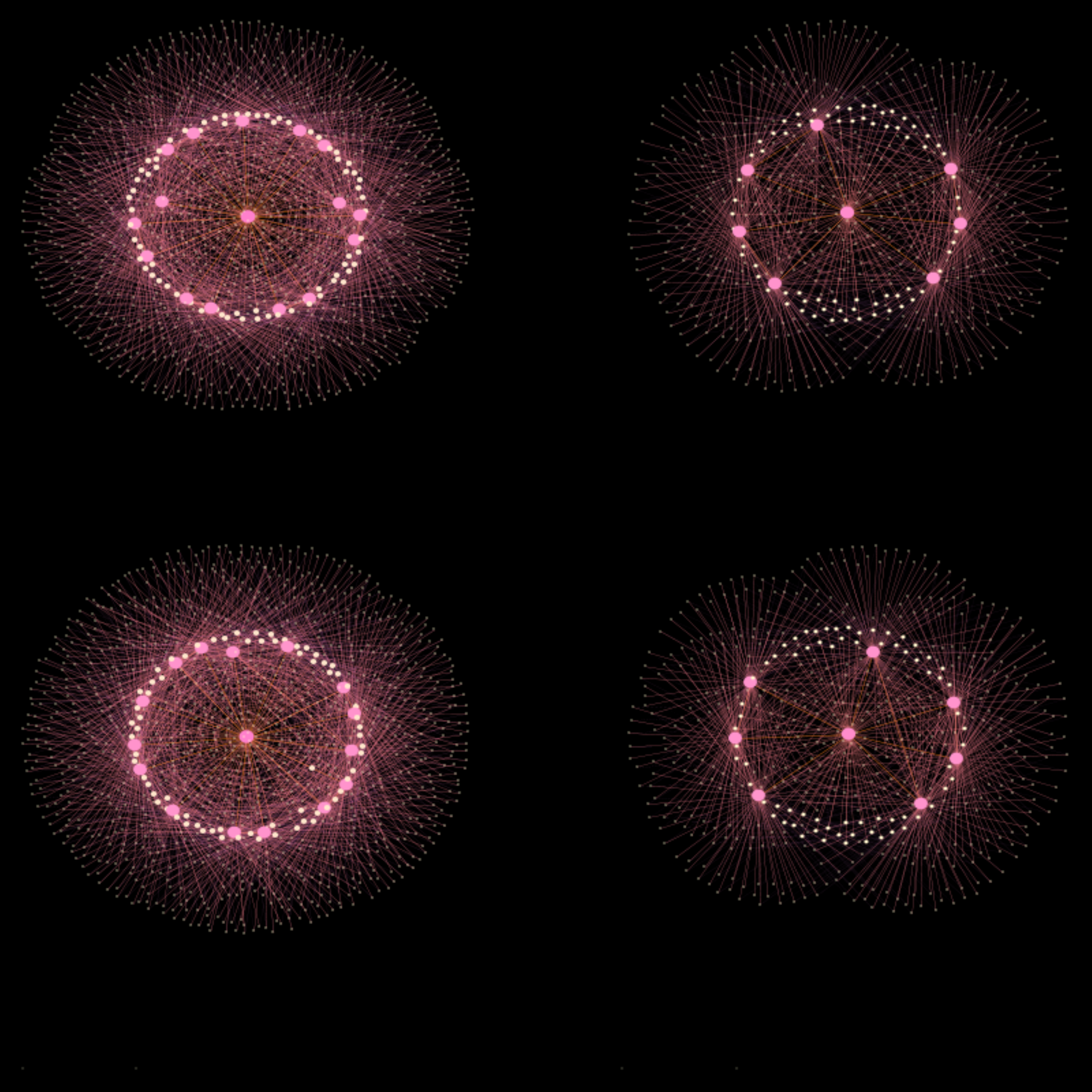}}
\scalebox{0.23}{\includegraphics{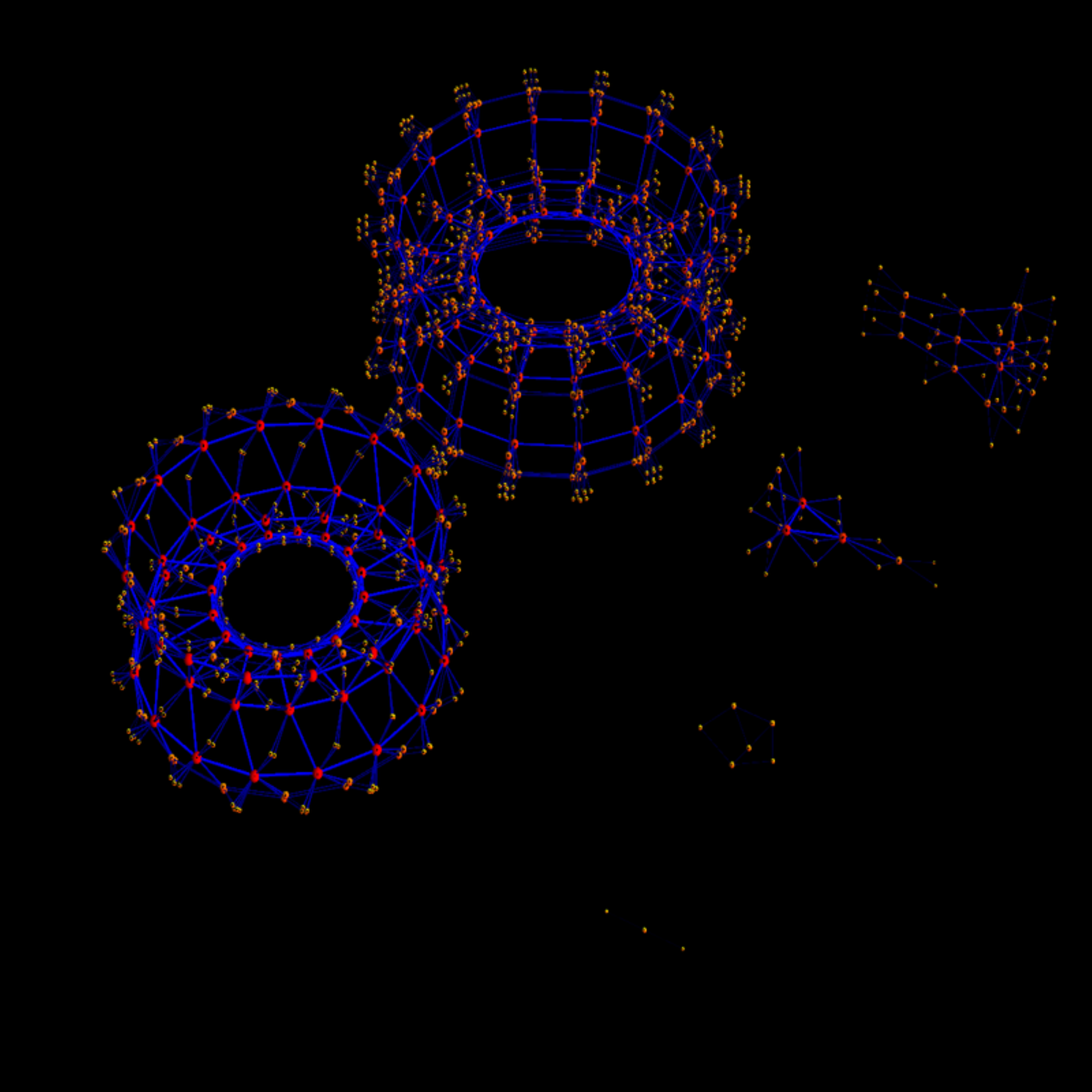}}
\scalebox{0.23}{\includegraphics{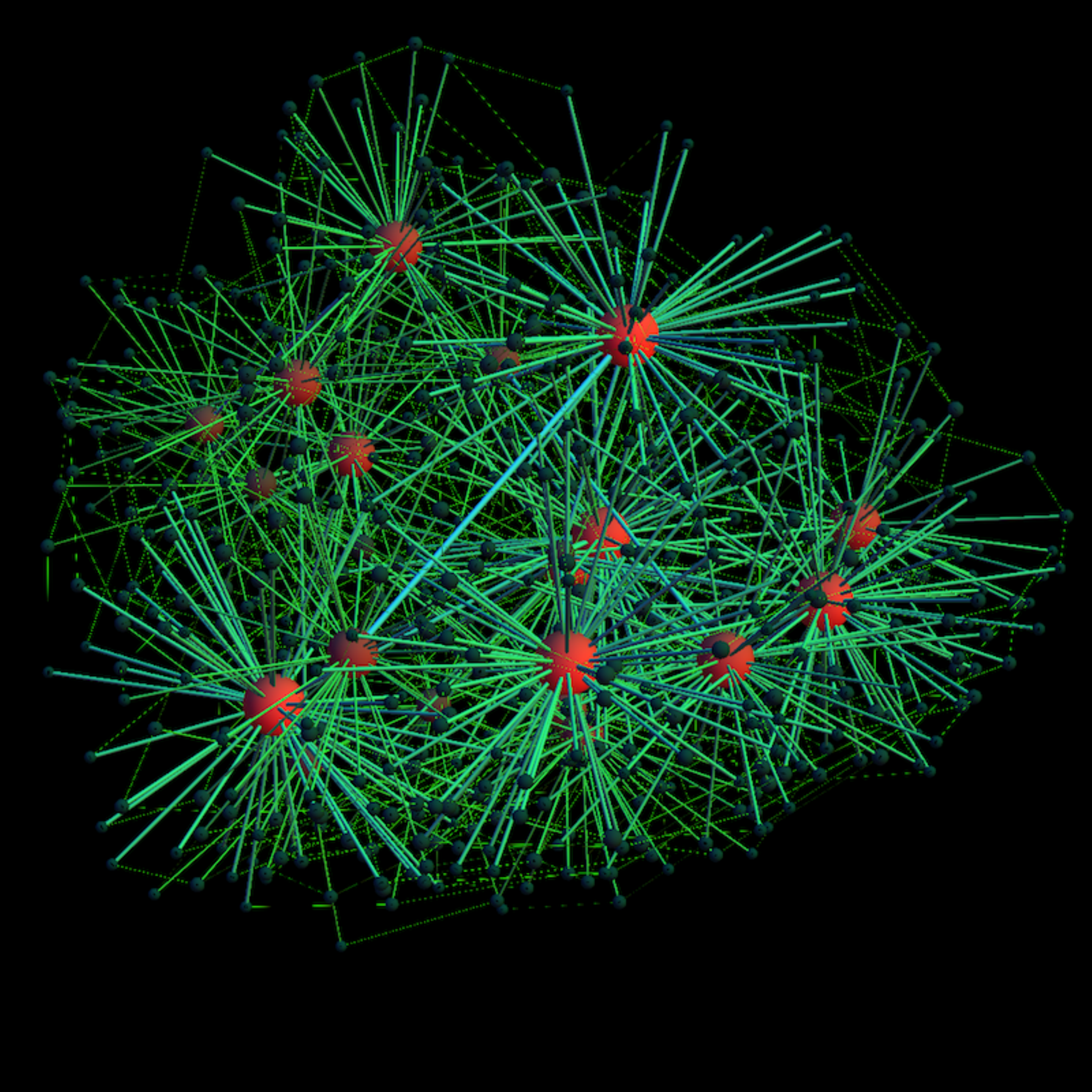}}
\scalebox{0.23}{\includegraphics{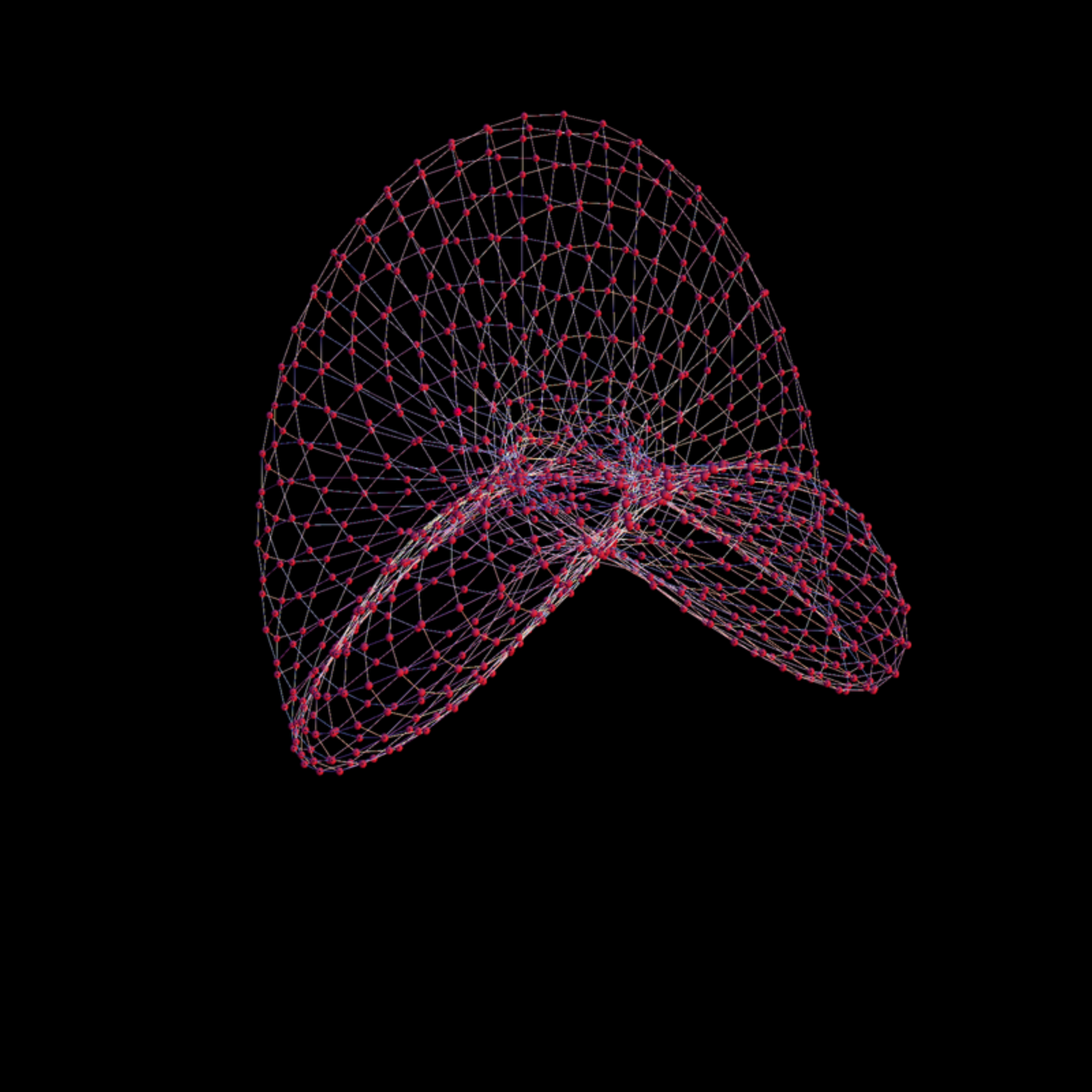}}
\scalebox{0.23}{\includegraphics{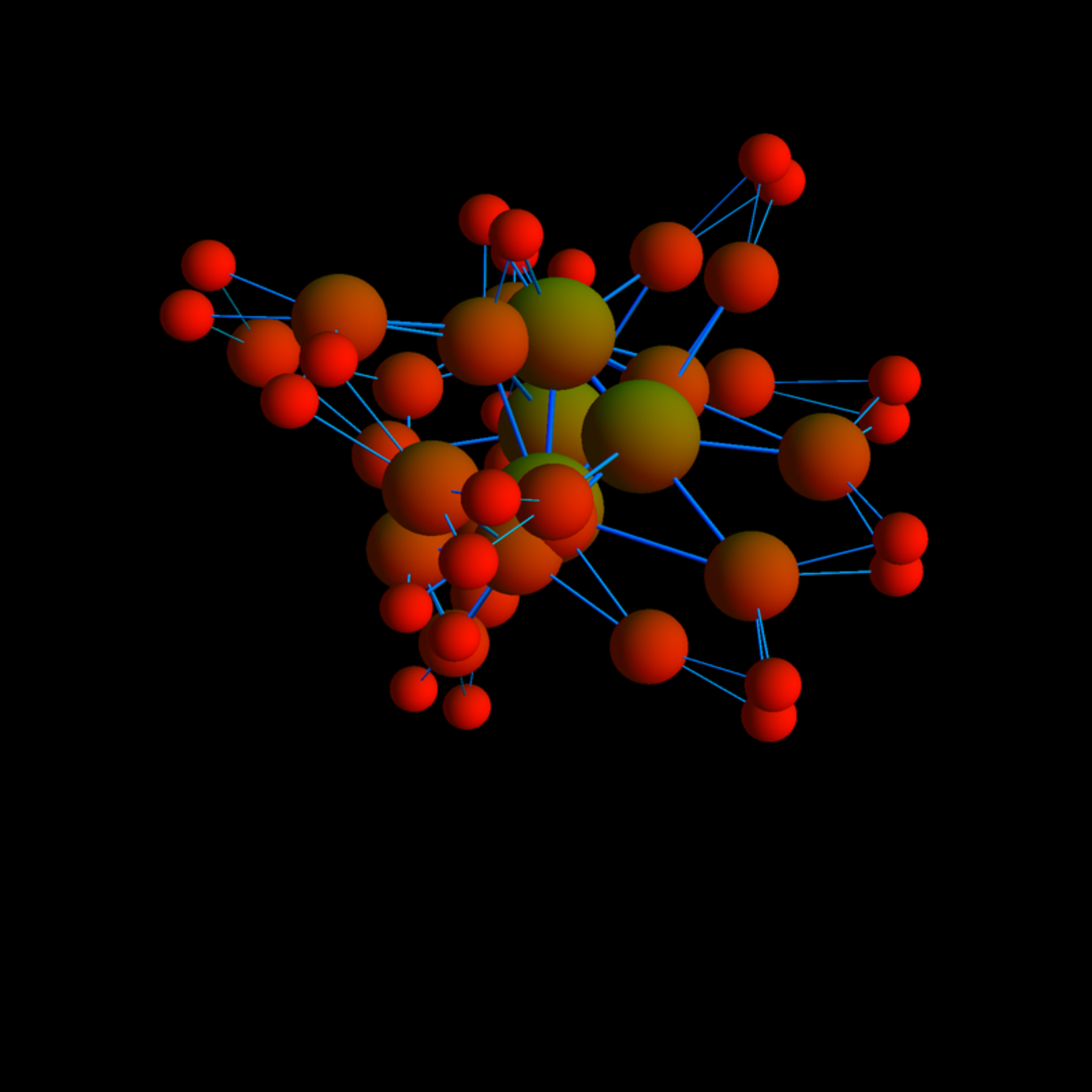}}
\scalebox{0.23}{\includegraphics{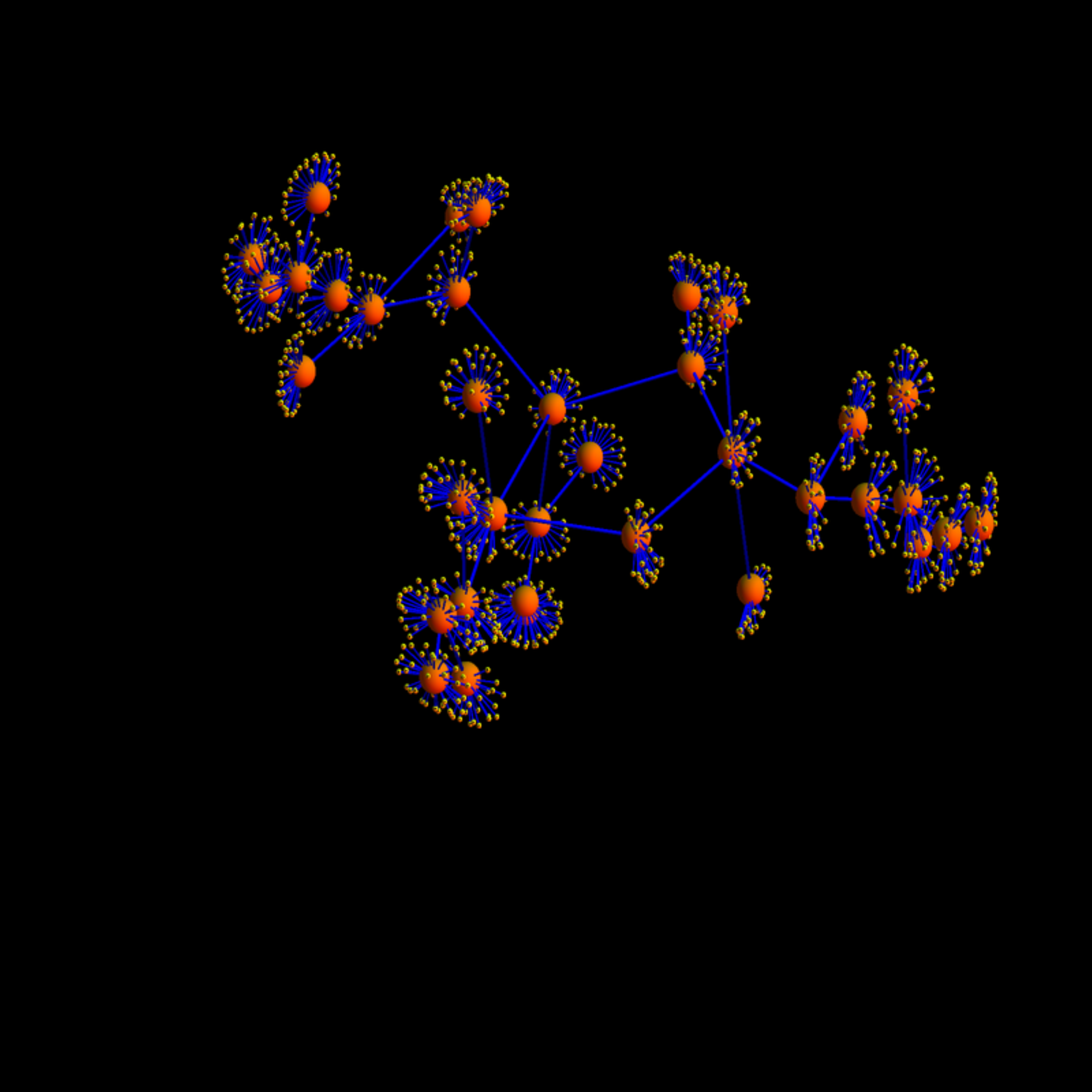}}
\scalebox{0.23}{\includegraphics{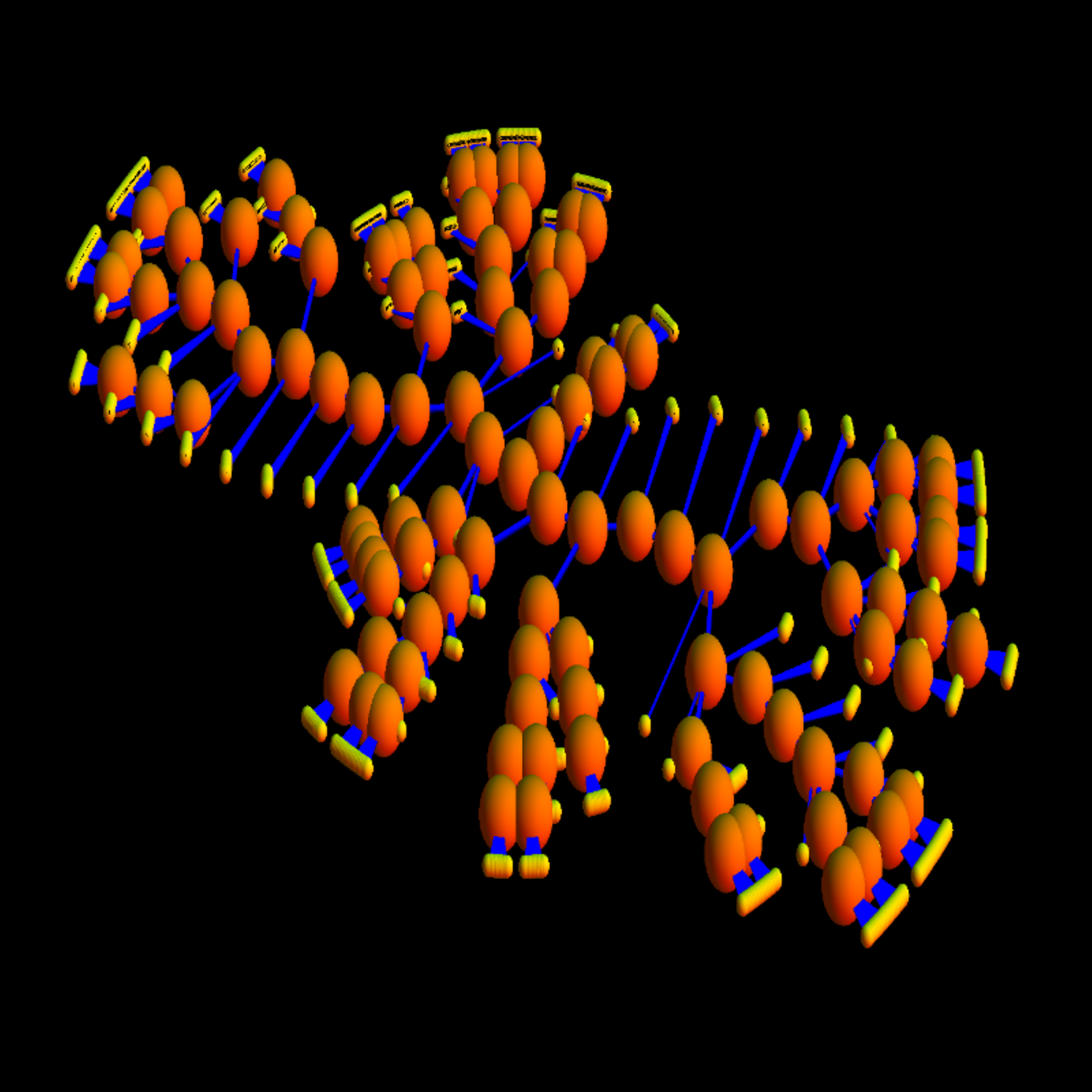}}
\scalebox{0.23}{\includegraphics{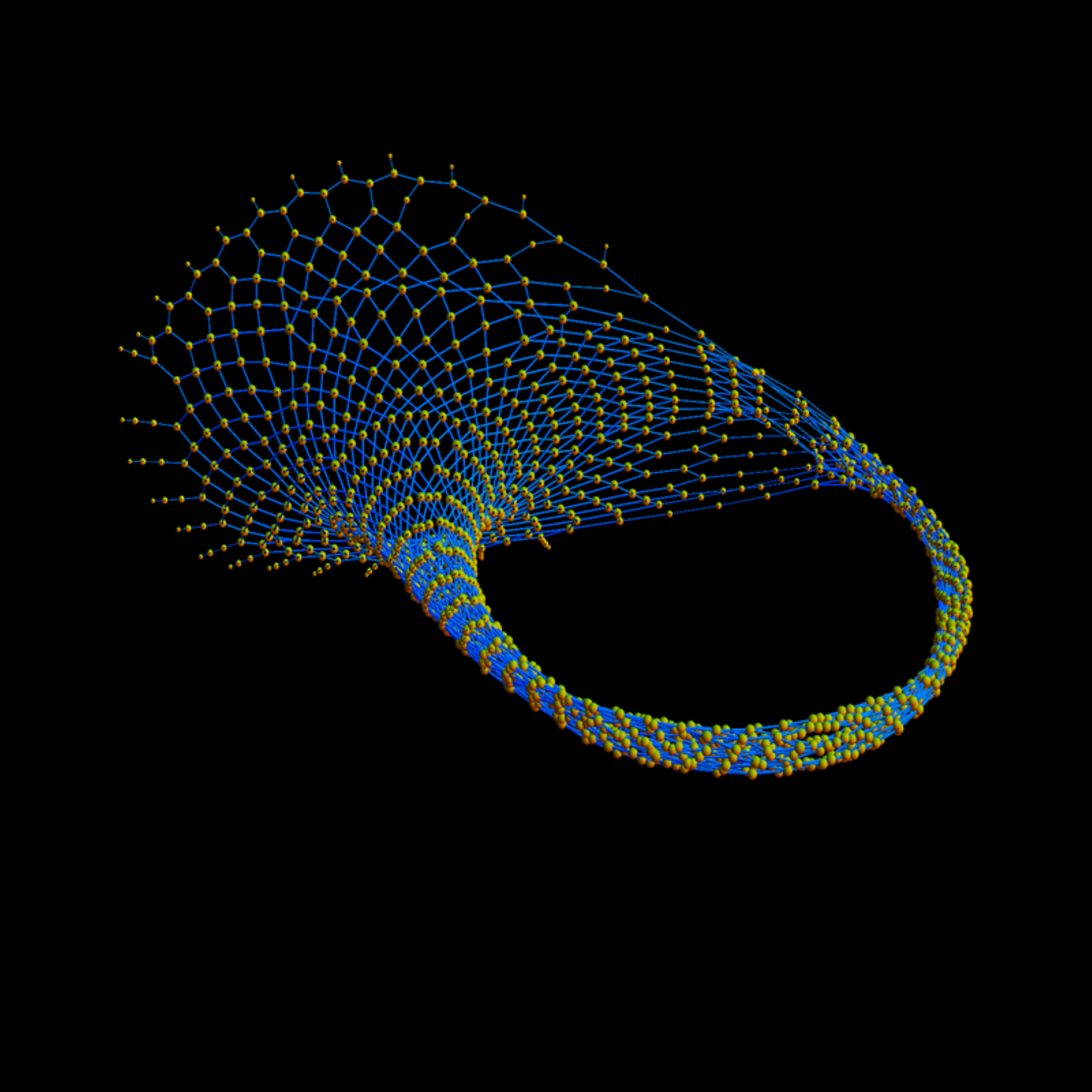}}
\scalebox{0.23}{\includegraphics{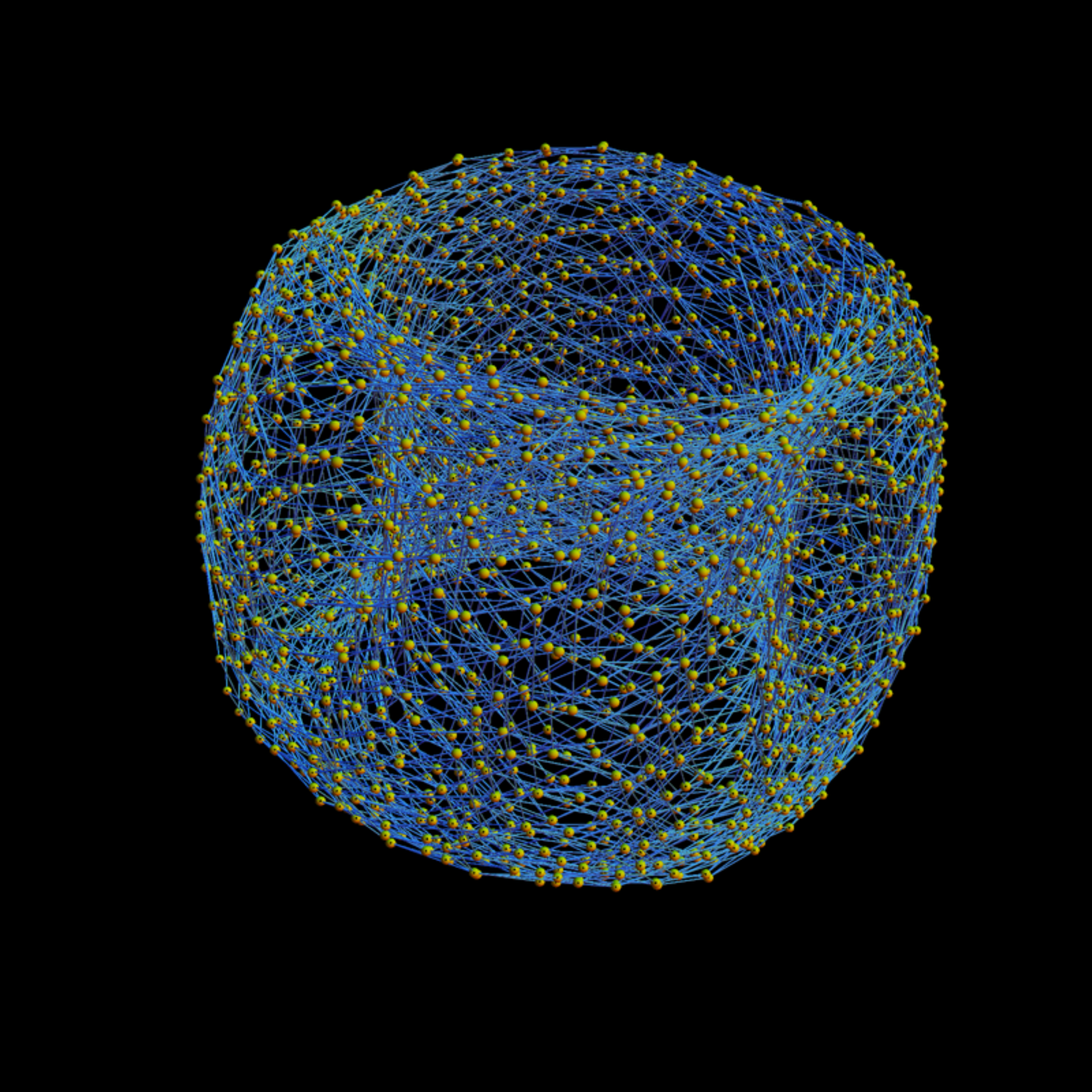}}
\scalebox{0.23}{\includegraphics{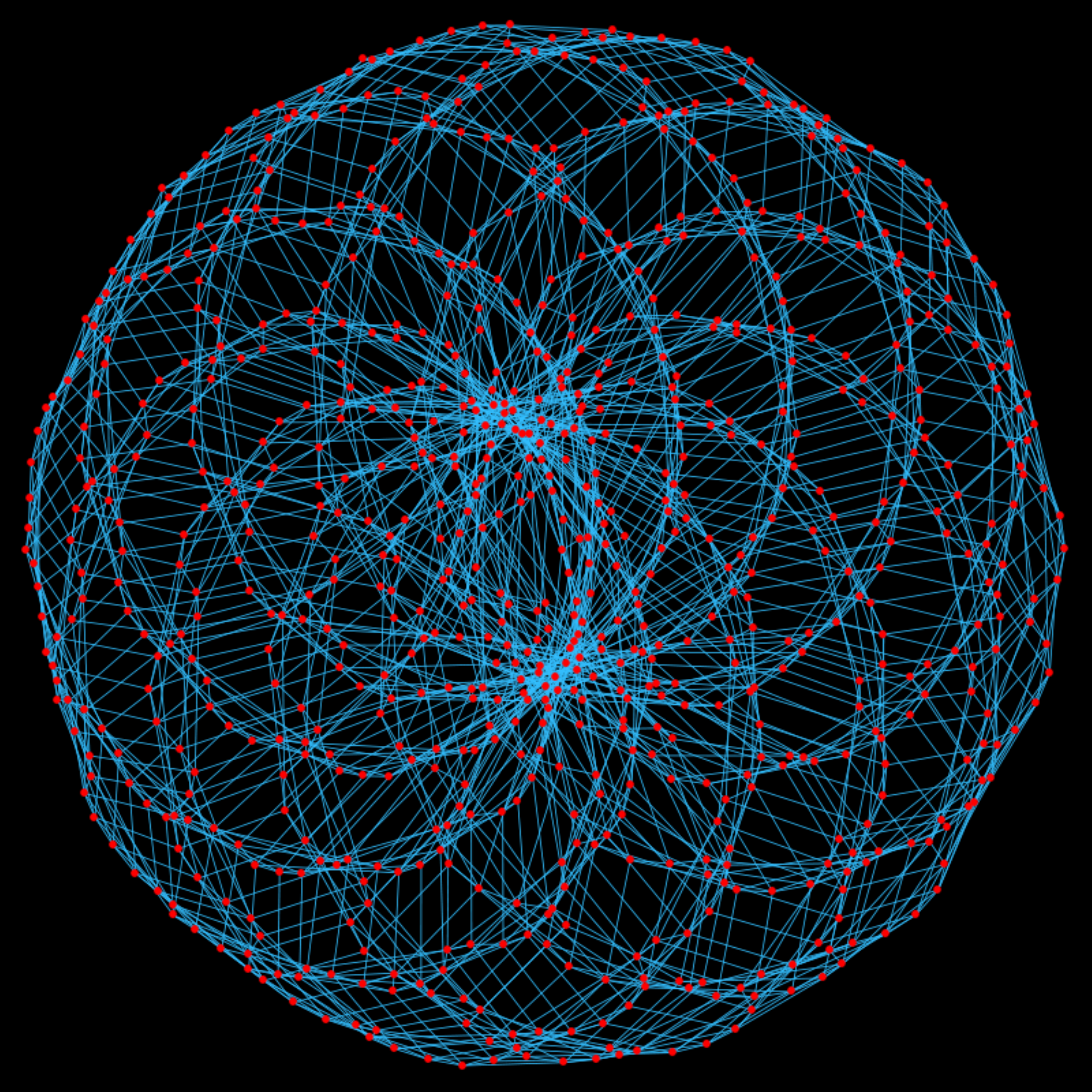}}
\scalebox{0.23}{\includegraphics{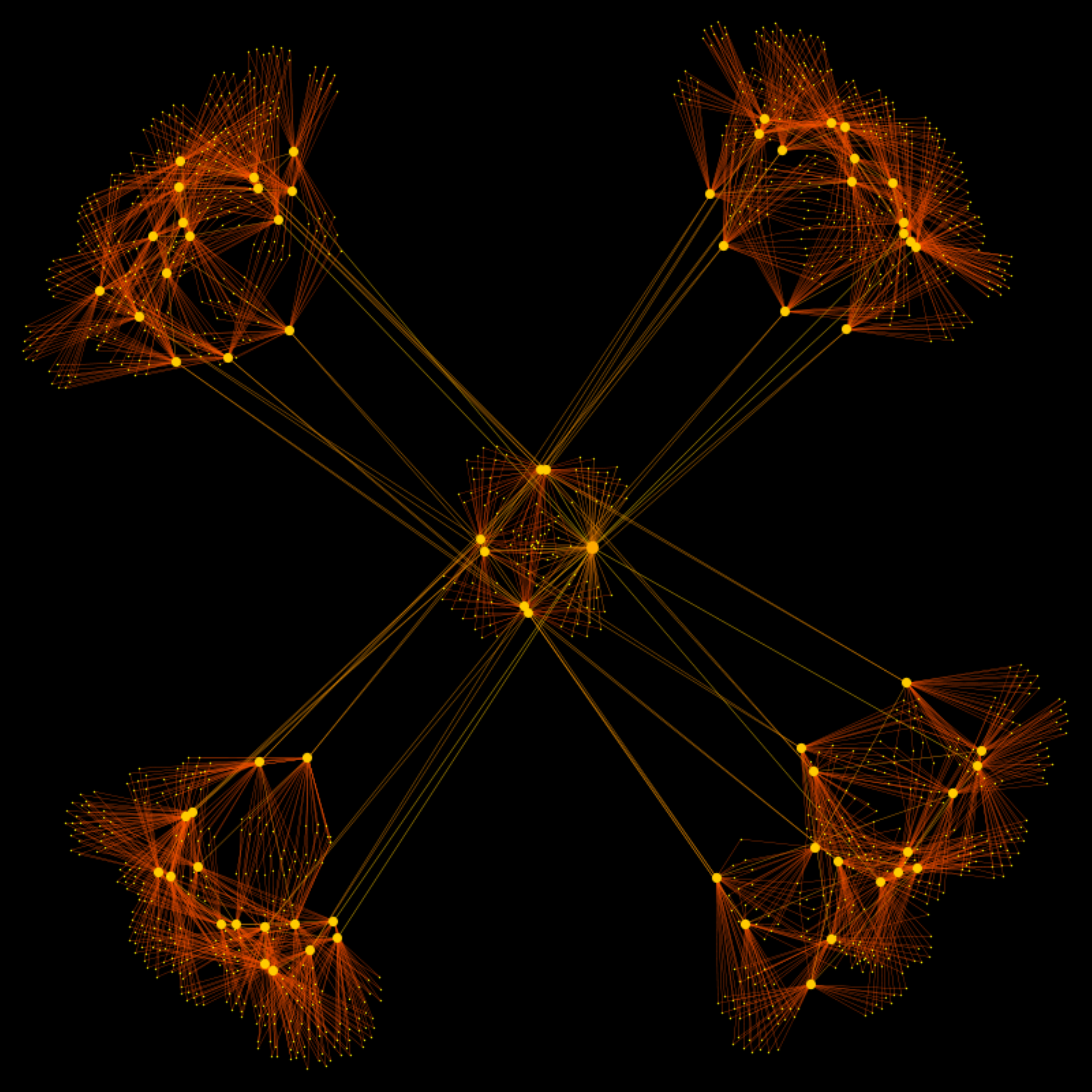}}
\scalebox{0.23}{\includegraphics{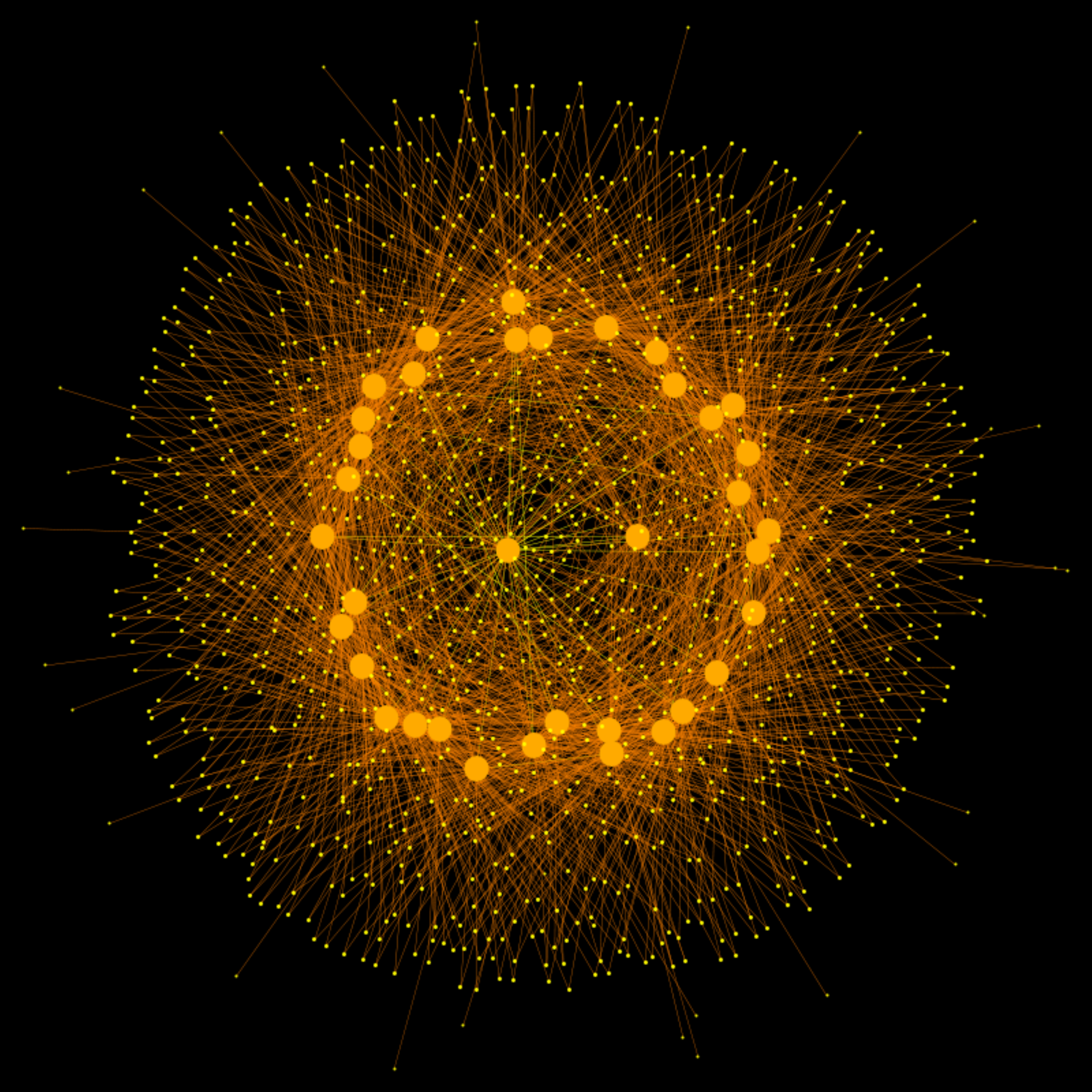}}
\scalebox{0.23}{\includegraphics{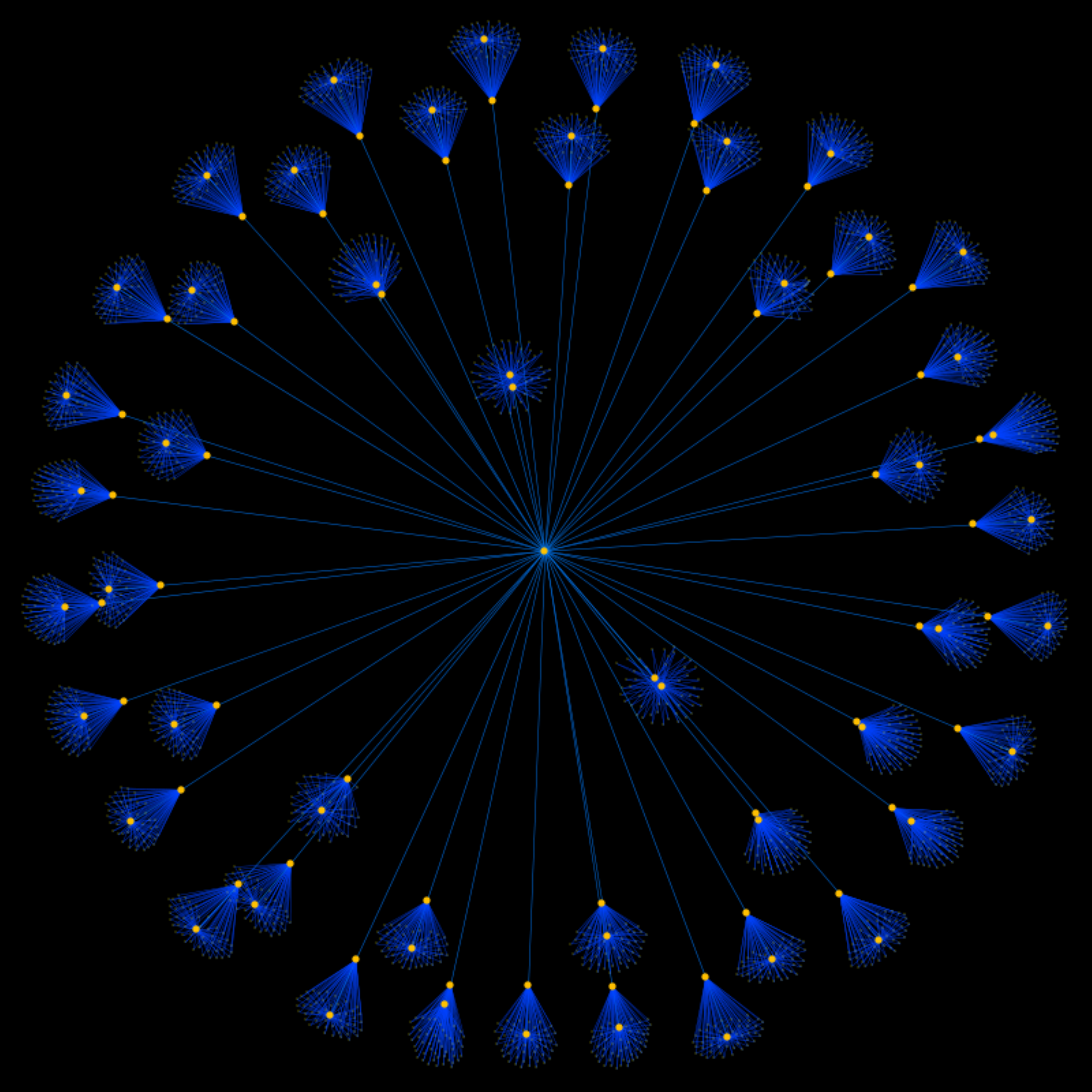}}
\scalebox{0.23}{\includegraphics{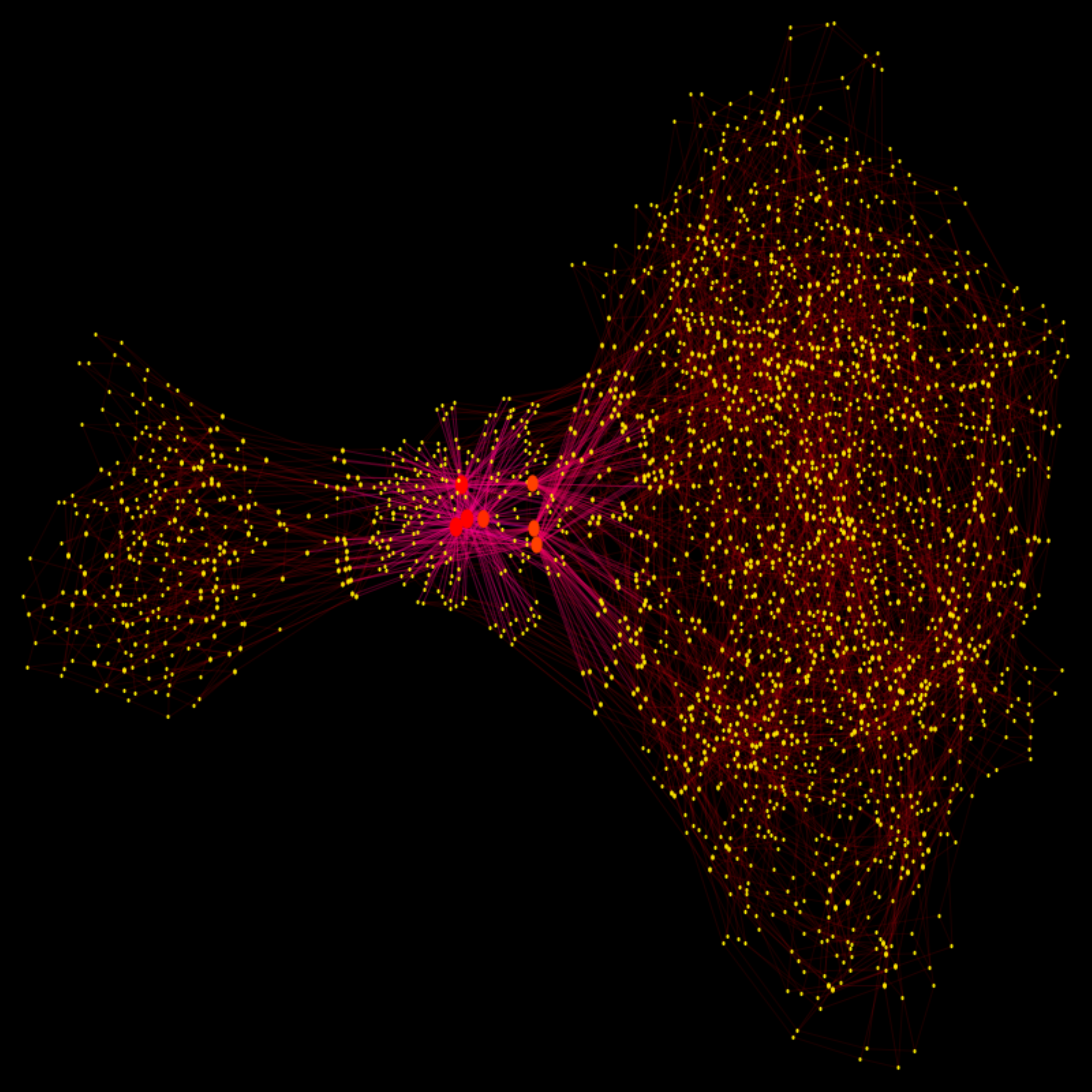}}
\scalebox{0.23}{\includegraphics{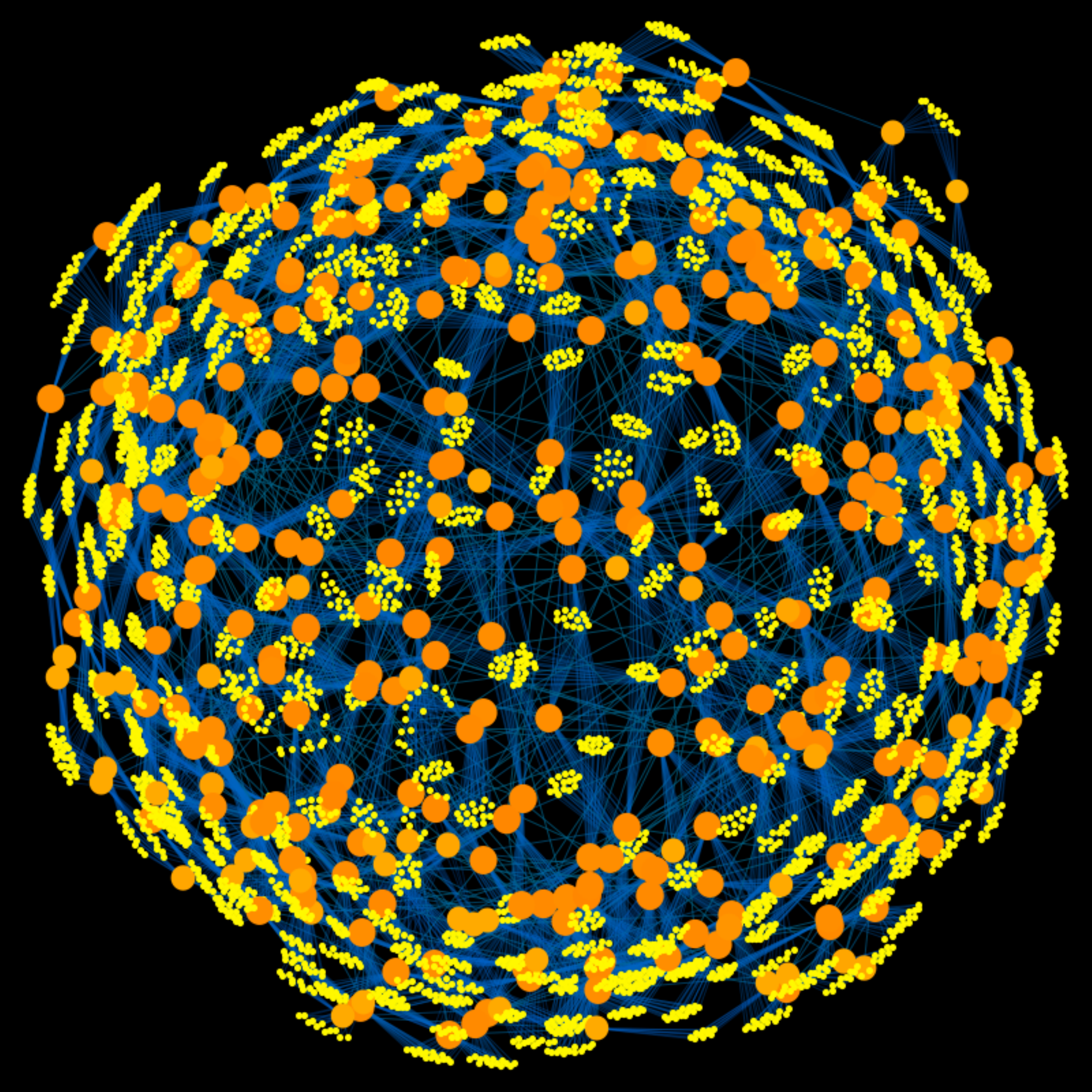}}

\vspace{12pt}
\bibliographystyle{plain}

\begin{thebibliography}{10}

\bibitem{Linked}
A-L. Barabasi.
\newblock {\em Linked, The New Science of Networks}.
\newblock Perseus Books Group, 2002.

\bibitem{BallobasKozmaMiklo}
B.~Bollob{\'a}s, R.~Kozma, and D.~Mikl{\'o}s, editors.
\newblock {\em Handbook of large-scale random networks}, volume~18 of {\em
  Bolyai Society Mathematical Studies}.
\newblock Springer, Berlin, 2009.

\bibitem{BornholdtSchuster}
S.~Bornholdt and H.~Schuster, editors.
\newblock {\em Handbook of Graphs and Networks}.
\newblock Viley-VCH, 2003.

\bibitem{Buchanan}
M.~Buchanan.
\newblock {\em Nexus: small worlds and the groundbreaking science of networks}.
\newblock W.W. Norton and Company, 2002.

\bibitem{Connected}
N.~Christakis and J.H. Fowler.
\newblock {\em Connected}.
\newblock Little, Brown and Company, 2009.

\bibitem{CohenHavlin}
R.~Cohen and S.~Havlin.
\newblock {\em Complex Networks, Structure, Robustness and Function}.
\newblock Cambridge University Press, 2010.

\bibitem{dicksonI}
L.E. Dickson.
\newblock {\em History of the theory of numbers.{V}ol. I:{D}ivisibility and
  primality.}
\newblock Chelsea Publishing Co., New York, 1966.

\bibitem{Easley}
D.~Easley and Jon Kleinberg.
\newblock {\em Networks, crowds and Markets, Reasonings about a highly
  connected world}.
\newblock Cambridge University Press, 2010.

\bibitem{Meester}
M.~Franceschetti and R.~Meester.
\newblock {\em Random networks for communication}.
\newblock Cambridge Series in Statistical and Probabilistic Mathematics.
  Cambridge University Press, Cambridge, 2007.
\newblock From statistical physics to information systems.

\bibitem{GK2}
M.~Ghachem and O.~Knill.
\newblock Deterministic {Watts-Strogatz} type graphs.
\newblock Preliminary notes, 2013.

\bibitem{GK1}
M.~Ghachem and O.~Knill.
\newblock Simple rules for natural networks.
\newblock Preliminary notes, 2013.

\bibitem{gleason88}
Andrew~M. Gleason.
\newblock Angle trisection, the heptagon, and the triskaidecagon.
\newblock {\em Amer. Math. Monthly}, 95(3):185--194, 1988.

\bibitem{GoodmanORourke}
J.E. Goodman and J.~O'Rourke.
\newblock {\em Handbook of discrete and computational geometry}.
\newblock Chapman and Hall, CRC, 2004.

\bibitem{Goyal}
S.~Goyal.
\newblock {\em Connections}.
\newblock Princeton University Press, 2007.

\bibitem{ibe}
O.C. Ibe.
\newblock {\em Fundamentals of Stochastic Networks}.
\newblock Wiley, 2011.

\bibitem{Jackson}
M.O. Jackson.
\newblock {\em Social and Economic Networks}.
\newblock Princeton University Press, 2010.

\bibitem{knillprobability}
O.~Knill.
\newblock {\em Probability Theory and Stochastic Processes with Applications}.
\newblock Overseas Press, 2009.

\bibitem{lagarias}
J.C. Lagarias.
\newblock The {$3x+1$} problem: an overview.
\newblock In {\em The ultimate challenge: the {$3x+1$} problem}, pages 3--29.
  Amer. Math. Soc., Providence, RI, 2010.

\bibitem{nbw2006}
D.~Watts M.~Newman, A-L.~Barab{\'a}si, editor.
\newblock {\em The structure and dynamics of networks}.
\newblock Princeton Studies in Complexity. Princeton University Press,
  Princeton, NJ, 2006.

\bibitem{newman2010}
M.E.J. Newman.
\newblock {\em Networks}.
\newblock Oxford University Press, Oxford, 2010.
\newblock An introduction.

\bibitem{lanford98}
III O.E.~Lanford.
\newblock Informal remarks on the orbit structure of discrete approximations to
  chaotic maps.
\newblock {\em Experiment. Math.}, 7(4):317--324, 1998.

\bibitem{Rannou}
F.~Rannou.
\newblock \'{E}tude num\'erique de transformations planes discr\`etes
  conservant les aires.
\newblock In {\em Transformations ponctuelles et leurs applications ({C}olloq.
  {I}nternat. {CNRS}, {N}o. 229, {T}oulouse, 1973)}, pages 107--122, 138.
  \'Editions Centre Nat. Recherche Sci., Paris, 1976.
\newblock With discussion.

\bibitem{Riesel}
H.~Riesel.
\newblock {\em Prime numbers and computer methods for factorization}, volume~57
  of {\em Progress in Mathematics}.
\newblock {Birkh\"auser} Boston Inc., 1985.

\bibitem{shen}
H-W. Shen.
\newblock {\em Community structure of complex networks}.
\newblock Springer Theses. Springer, Heidelberg, 2013.

\bibitem{Sync}
S.~H. Strogatz.
\newblock {\em Sync: The Ermerging Science of Spontaneous Order}.
\newblock Hyperion, 2003.

\bibitem{vansteen}
M.~van Steen.
\newblock {\em Graph Theory and Complex Networks, An introduction}.
\newblock Maarten van Steen, ISBN: 778-90-815406-1-2, 2010.

\bibitem{vivaldi}
F.~Vivaldi.
\newblock Algebraic number theory and {H}amiltonian chaos.
\newblock In {\em Number theory and physics ({L}es {H}ouches, 1989)}, volume~47
  of {\em Springer Proc. Phys.}, pages 294--301. Springer, Berlin, 1990.

\bibitem{WassermanFaust}
S.~Wasserman and K.~Faust.
\newblock {\em Social Network analysis: Methods and applications}.
\newblock Cambridge University Press, 1994.

\bibitem{SmallWorld}
D.~J. Watts.
\newblock {\em Small Worlds}.
\newblock Princeton University Press, 1999.

\bibitem{SixDegrees}
D.~J. Watts.
\newblock {\em Six Degrees}.
\newblock W. W. Norton and Company, 2003.

\bibitem{WattsStrogatz}
D.~J. Watts and S.~H. Strogatz.
\newblock Collective dynamics of 'small-world' networks.
\newblock {\em Nature}, 393:440--442, 1998.

\bibitem{WolframState1}
S.~Wolfram.
\newblock State transition diagrams for modular powers.
\newblock
  http://demonstrations.wolfram.com/StateTransitionDiagramsForModularPowers/,
  2007.

\bibitem{WolframState2}
S.~Wolfram.
\newblock Cellular automata state transition diagrams.
\newblock
  http://demonstrations.wolfram.com/CellularAutomatonStateTransitionDiagrams,
  2008.

\end{thebibliography}

\end{document}